\DeclareMathAlphabet{\mathpzc}{OT1}{pzc}{m}{it} 
\newtheorem{Th}{Theorem}[section]              
\newtheorem{Cor}{Corollary}[section]
\newtheorem{Rm}{Remark}[section]
\newtheorem{Cl}{Claim}
\newtheorem{Prop}{Proposition}[section]
\title[Harmonic analysis and Bessel operators]{Harmonic analysis operators associated with multidimensional Bessel operators}
\author[J.J. Betancor]{J.J. Betancor}
\address{Departamento de Análisis Matemático\\
Universidad de la Laguna\\
Campus de Anchieta, Avda. Astrofísico Francisco Sánchez, s/n\\
38271 La Laguna (Sta. Cruz de Tenerife), Spain}
\email{jbetanco@ull.es}
\author[A.J. Castro]{A.J. Castro}
\address{Departamento de Análisis Matemático\\
Universidad de la Laguna\\
Campus de Anchieta, Avda. Astrofísico Francisco Sánchez, s/n\\
38271 La Laguna (Sta. Cruz de Tenerife), Spain}
\email{alecas000@gmail.com}
\author[J. Curbelo]{J. Curbelo}
\address{Instituto de Ciencias Matemáticas\\
Consejo Superior de Investigaciones Científicas\\
Serrano 121\\
28006 Madrid, Spain} \email{jezabel.curbelo@icmat.es}
\thanks{The first author is partially supported by MTM2007/65609. The second author is supported by a grant for Master studies of "la Caixa". The third author is supported by a grant JAE-Predoc of the CSIC (Spain).}
\begin{document}

  \maketitle                                  

  \begin{abstract}
    In this paper we establish that the maximal operator and the Littlewood-Paley g-function associated with the heat semigroup defined by multidimensional Bessel
    operators are of weak type (1,1). Also, we prove that Riesz transforms in the multidimensional Bessel setting are of strong type (p,p), for every $1<p<\infty$,
    and of weak type (1,1).
  \end{abstract}

  \section{Introduction}

    Harmonic analysis operators (maximal operators, Riesz transforms, Littlewood-Paley g-func\-tions, multipliers, \dots) associated with orthogonal expansions in discrete
    and continuous settings have been studied since the 60's of the last century, but in the last years it has been a very active area and many authors have studied this
    topic (see, for instance, \cite{AFS}, \cite{CC}, \cite{CH}, \cite{CNS}, \cite{CR}, \cite{GMMST}, \cite{GHMSTV}, \cite{MST}, \cite{M}, \cite{MS},  \cite{NS}, \cite{NSt},
    \cite{Sa}, \cite{Sj}, \cite{ST} and \cite{Th}). Muckenhoupt and Stein \cite{MS} made a deep study about harmonic analysis in the one dimensional ultraspherical and Bessel
    settings. We are in\-te\-res\-ted in this paper in the $L^p$-boundedness properties for the maximal operator and Littlewood-Paley g-function associated with the heat
    semigroup for the multidimensional Bessel operators and the Riesz transforms in the multidimensional Bessel context. As far as we know harmonic analysis operators in
    the Bessel settings had been studied always in the one dimensional case. After \cite{MS}, Andersen \cite{An}, Kerman (\cite{K1} and \cite{K2}) and Andersen and
    Kerman \cite{AK} established $L^p$-weighted inequalities for the Bessel-Riesz transforms. Stempak \cite{Stem} studied Littlewood-Paley g-functions and Mihlin-Hörmander
    multipliers in the Bessel setting. Recently, in \cite{BFMR}, \cite{BFMT2}, \cite{BFMT1}, \cite{BFS}, \cite{BHNV}, and \cite{BS} the one-dimensional harmonic
    analysis in the Bessel context has been completed investigating properties of g-functions and Riesz transforms of every order and maximal operators associated with
    Poisson and heat semigroups.\\

    We consider the n-dimensional Bessel operator $\Delta_{\lambda_1, \dots, \lambda_n}$ defined by
    $$\Delta_{\lambda_1, \dots, \lambda_n}=\sum_{j=1}^n \Delta_{\lambda_j}^{(j)},$$
    where $\Delta_{\lambda_j}^{(j)}=-x_j^{-2\lambda_j}\frac{\partial}{\partial x_j}\left( x_j^{2\lambda_j} \frac{\partial}{\partial x_j}\right)$,
    and $\lambda_j > -1/2$, for every $j=1, \dots, n$, $n \geq 2$. The heat semigroup generated by $\Delta_{\lambda_1, \dots, \lambda_n}$ is represented by
    $\{W_t^{\lambda_1, \dots, \lambda_n}\}_{t>0}$. This semigroup is a symmetric diffusion semigroup in the sense of \cite{St1} with respect to the measure
    $m_{\lambda_1, \dots, \lambda_n}$ defined by $\overset{n}{\underset{j=1}{\prod}}x_j^{2\lambda_j}dx$ on $(0,\infty)^n$. Then, according to
    \cite[p. 73]{St1} the maximal operator
    $$W_*^{\lambda_1, \dots, \lambda_n}(f)=\sup_{t>0} \left \lvert W_t^{\lambda_1, \dots, \lambda_n}(f) \right \rvert$$
    is bounded from $L^p((0,\infty)^n,\overset{n}{\underset{j=1}{\prod}} x_j^{2\lambda_j}dx)$ into itself, for every $1<p<\infty$. Inspired in the ideas developed
    by Nowak and Sjögren in \cite{NS1} we establish that $W_*^{\lambda_1, \dots, \lambda_n}$ is of weak type (1,1) with respect to the measure
    $m_{\lambda_1, \dots, \lambda_n}$.

    \begin{Th}\label{maximal}
      Let $\lambda_j>-1/2$, $j=1, \dots, n$. The maximal operator $W_*^{\lambda_1,\dots, \lambda_n}$ is bounded from
      $L^1((0,\infty)^n,\overset{n}{\underset{j=1}{\prod}} x_j^{2\lambda_j}dx)$ into $L^{1,\infty}((0,\infty)^n,\overset{n}{\underset{j=1}{\prod}} x_j^{2\lambda_j}dx)$.
    \end{Th}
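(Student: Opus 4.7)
The starting point is that, since $\Delta_{\lambda_1,\dots,\lambda_n}=\sum_{j=1}^n\Delta_{\lambda_j}^{(j)}$ is a sum of commuting one-dimensional Bessel operators acting on distinct variables, the heat kernel factorises as $W_t^{\lambda_1,\dots,\lambda_n}(x,y) = \prod_{j=1}^n W_t^{\lambda_j}(x_j,y_j)$, where $W_t^{\lambda_j}$ is the one-dimensional Bessel heat kernel. This kernel has a closed-form expression involving the modified Bessel function $I_{\lambda_j-1/2}$, and the classical asymptotics of $I_\nu$ at $0$ and at $\infty$ yield the sharp pointwise bounds that are by now standard in the one-dimensional Bessel theory: a Gaussian-type upper bound adapted to the doubling measure $m_{\lambda_j}$ in the region $x_jy_j\ge t$, and a smaller polynomial bound in the region $x_jy_j\le t$.

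The plan is to follow the strategy of Nowak and Sj\"ogren \cite{NS1} and split
\[
W_*^{\lambda_1,\dots,\lambda_n}f \;\le\; W_{*,\mathrm{loc}}^{\lambda_1,\dots,\lambda_n}f + W_{*,\mathrm{glob}}^{\lambda_1,\dots,\lambda_n}f,
\]
where the local part corresponds to a neighbourhood of the diagonal (for instance, the region in which $|x_j-y_j|$ is comparable to or smaller than $x_j+y_j$ in every coordinate). The local piece is dominated pointwise by the Hardy--Littlewood maximal operator on the space of homogeneous type $((0,\infty)^n,|\cdot|,dm_{\lambda_1,\dots,\lambda_n})$, whose weak type $(1,1)$ is standard Calder\'on--Zygmund theory; the Gaussian bound on the kernel in the local region is the key input.

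For the global part I would take the supremum over $t>0$ inside the product kernel and, in each coordinate, apply whichever of the two Bessel asymptotics is relevant according as $x_jy_j<t$ or $x_jy_j\ge t$. This reduces matters to proving that an explicit positive kernel $K(x,y)$ defines an operator of weak type $(1,1)$ with respect to $m_{\lambda_1,\dots,\lambda_n}$, which is checked by decomposing $(0,\infty)^n\times(0,\infty)^n$ into finitely many regions according to the relative sizes of the $x_j$, the $y_j$ and the $|x_j-y_j|$, and integrating directly.

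The hard part is the global estimate. Because the supremum is taken over a single parameter $t>0$, a choice of $t$ which is good for one coordinate need not be good for the others, so the resulting kernel $K(x,y)$ has substantially weaker decay than each of the one-dimensional factors. The technical heart of the argument is to find a region decomposition that is compatible with the weight $\prod_j x_j^{2\lambda_j}$ and to treat carefully the boundary behaviour near $\{x_j=0\}$, where the cases $-1/2<\lambda_j<1/2$ and $\lambda_j\ge 1/2$ typically have to be separated in order to absorb the singularities of the Bessel measure.
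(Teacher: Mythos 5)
Your treatment of the local part is correct, and it is a genuinely different (and arguably cleaner) route than the paper's. Combining the two one-dimensional asymptotics as you state them, on the local region one gets $W_t^{\lambda_j}(x_j,y_j)\le C\,e^{-c(x_j-y_j)^2/t}\,m_{\lambda_j}\big((x_j-\sqrt t,x_j+\sqrt t)\cap(0,\infty)\big)^{-1}$ (the polynomial bound $t^{-\lambda_j-1/2}$ is exactly the ball-volume normalization when $x_j\lesssim\sqrt t$), so pointwise domination by the Hardy--Littlewood maximal operator of the space of homogeneous type $((0,\infty)^n,|\cdot|,m_{\lambda_1,\dots,\lambda_n})$ indeed gives the weak type $(1,1)$ of the local piece. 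The paper proceeds differently: it decomposes $(0,\infty)^n$ into dyadic cubes $\prod_i[2^{j_i},2^{j_i+1})$, freezes the weight on each cube, and transfers to the classical heat maximal operator $\mathbb W_*$ on $\mathbb R^n$ with a uniform constant, handling the non-Gaussian remainder in \eqref{A6} by a local Hardy operator $H_{loc}$; your homogeneous-space argument buys a shorter proof, the paper's transference buys more elementary inputs. (A small slip in your definition of the local region: $|x_j-y_j|\le x_j+y_j$ holds for \emph{all} $x_j,y_j>0$; you mean something like $y_j\in(x_j/2,2x_j)$.)

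The genuine gap is the global part, which is the heart of the theorem for $n\ge2$. As written, ``take the supremum over $t>0$ inside the product kernel'' decouples the coordinates, and the decoupled kernel fails weak type $(1,1)$: on the region $\{y_j<x_j/2,\ j=1,\dots,n\}$ the coordinatewise suprema give $\prod_j x_j^{-(2\lambda_j+1)}$, and testing against a unit mass concentrated near a point $y^0$ close to the origin, the level sets $\{x:\prod_j x_j^{2\lambda_j+1}<1/\gamma,\ x_j>2y_j^0\}$ have $m_{\lambda_1,\dots,\lambda_n}$-measure of order $\gamma^{-1}\log^{n-1}\big(1/(\gamma\prod_j (y_j^0)^{2\lambda_j+1})\big)$, so no uniform weak $(1,1)$ bound is possible. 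The single $t$ must stay coupled across coordinates: taking the supremum of the full product of the $t$-uniform bounds \eqref{A3} and \eqref{A6} yields the coupled kernels $\big(\sum_j x_j\big)^{-2\sum_j(\lambda_j+1/2)}$ (the operator $L_{\lambda_1,\dots,\lambda_n}$) in the all-small case and, in the mixed small/local case, $\prod_{j>l}(x_jy_j)^{-\alpha_j}\big(\sum_{j\le l}x_j^2+\sum_{j>l}(x_j-y_j)^2\big)^{-\varepsilon}$ with $\varepsilon=\sum_{j\le l}(\alpha_j+1/2)+(k-l)/2$ (the operator $\mathcal H_{l,k}$), whose weak $(1,1)$ is the Case~3 argument of Nowak--Sj\"ogren \cite{NS1} --- a delicate estimate that cannot be ``checked by integrating directly,'' since weak type is not a Schur-test condition. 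Two further structural ingredients are missing from your sketch and cannot be avoided: the region $y_j>2x_j$ is controlled by the $t$-uniform bound $W_t^\lambda(x,y)\le Cy^{-2\lambda-1}$ of \eqref{A0}, which produces the Hardy operator $H_\infty$, bounded (strong type) on $L^1$; and the assembly of the $3^n$ regions uses \cite[Proposition 1]{Din} to compose a weak-$(1,1)$ operator in one group of variables with \emph{strong}-$(1,1)$ operators ($H_\infty$, $H_{loc}$) in the complementary group --- compositions of merely weak-type pieces fail, which is exactly why the strong $L^1$ Hardy bounds matter. Finally, your anticipated case split $-1/2<\lambda_j<1/2$ versus $\lambda_j\ge1/2$ never occurs: all estimates in the paper use only $\lambda_j>-1/2$.
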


    Since for every $f \in C_c^\infty((0,\infty)^n)$, the space of $C^\infty$-functions on $(0,\infty)^n$ that have compact support,
    $$\lim_{t \rightarrow 0^+} W_t^{\lambda_1, \dots, \lambda_n}(f)(x)=f(x), \quad x \in (0,\infty)^n, $$
    and $C_c^\infty((0,\infty)^n)$ is a dense subspace of $L^p((0,\infty)^n,\overset{n}{\underset{j=1}{\prod}} x_j^{2\lambda_j}dx)$, for every $1 \leq p < \infty$,
    standard arguments allow us to deduce from the $L^p$-boundedness properties of the maximal operator $W_*^{\lambda_1, \dots, \lambda_n}$ the following result.

    \begin{Cor}
      Let $\lambda_j >-1/2$, $j=1, \dots, n$, and $1 \leq p < \infty$. For every $f \in L^p((0,\infty)^n,$ $\overset{n}{\underset{j=1}{\prod}} x_j^{2\lambda_j}dx)$,
      $$\lim_{t \rightarrow 0^+} W_t^{\lambda_1, \dots, \lambda_n}(f)(x)=f(x), \quad \text{a.e. } x \in (0,\infty)^n. $$
    \end{Cor}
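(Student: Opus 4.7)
The plan is to run the standard density-plus-maximal-inequality argument that deduces a.e.\ convergence of an approximate identity from its $L^p$-convergence on a dense subclass. Two ingredients are already in place: the pointwise convergence $\lim_{t\to 0^+} W_t^{\lambda_1,\dots,\lambda_n}(g)(x) = g(x)$ on the dense subspace $C_c^\infty((0,\infty)^n)$ stated just above the Corollary, and a sublinear control of the oscillation as $t\to 0^+$ by an operator of weak type $(p,p)$. The latter is supplied by $W_*^{\lambda_1,\dots,\lambda_n}$: Stein's theorem \cite[p.~73]{St1} gives the strong type $(p,p)$ bound when $1<p<\infty$, and Theorem \ref{maximal} supplies the missing weak type $(1,1)$ endpoint, which is the only reason a separate argument is needed for $p=1$.

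Concretely, for $f\in L^p((0,\infty)^n, dm_{\lambda_1,\dots,\lambda_n})$, I would introduce the oscillation
$$\Omega f(x) := \limsup_{t\to 0^+} \bigl| W_t^{\lambda_1,\dots,\lambda_n}(f)(x) - f(x) \bigr|.$$
Given any $g\in C_c^\infty((0,\infty)^n)$, the linearity of $W_t^{\lambda_1,\dots,\lambda_n}$, subadditivity of $\limsup$, and the already known convergence $\Omega g \equiv 0$ yield the pointwise estimate
$$\Omega f(x) \le W_*^{\lambda_1,\dots,\lambda_n}(f-g)(x) + |f(x)-g(x)|, \qquad x\in (0,\infty)^n.$$
Then for each $\alpha>0$, splitting the level set $\{\Omega f>\alpha\}$ according to whether the first or second summand exceeds $\alpha/2$ and applying Chebyshev together with the (weak or strong) maximal inequality gives
$$m_{\lambda_1,\dots,\lambda_n}\bigl(\{\Omega f>\alpha\}\bigr) \le \frac{C}{\alpha^p}\,\|f-g\|_{L^p}^{p}.$$
Since $C_c^\infty((0,\infty)^n)$ is dense in $L^p$, the right-hand side can be made arbitrarily small, so the level set has measure zero. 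Intersecting over a sequence $\alpha_k\downarrow 0$ delivers $\Omega f=0$ almost everywhere, which is exactly the claim.

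I do not expect any real obstacle: the argument is entirely formal once the maximal bounds and the convergence on $C_c^\infty((0,\infty)^n)$ are granted. The only place where the result is genuinely nontrivial is at the endpoint $p=1$, where the required ingredient is the weak type $(1,1)$ bound of Theorem \ref{maximal}; for $1<p<\infty$ the same scheme works verbatim using the strong $(p,p)$ bound from \cite[p.~73]{St1}.
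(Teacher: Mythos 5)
Your proposal is correct and is exactly the ``standard arguments'' the paper invokes without writing out: the paper likewise combines the pointwise convergence on the dense subspace $C_c^\infty((0,\infty)^n)$ with the strong type $(p,p)$ bound for $W_*^{\lambda_1,\dots,\lambda_n}$ from \cite[p.~73]{St1} for $1<p<\infty$ and the weak type $(1,1)$ bound of Theorem~\ref{maximal} at the endpoint. Your oscillation-operator write-up fills in precisely those omitted details, with no gaps.
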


    According to  \cite[p. 67]{St1} the semigroup $\{W_t^{\lambda_1, \dots, \lambda_n}\}_{t>0}$ admits an analytic extension to
    $\Omega = \{t \in \mathbb{C} : \lvert Arg(t) \rvert < \pi/4 \}$. The Littlewood-Paley g-function of the first order associated with
    $\{W_t^{\lambda_1, \dots, \lambda_n}\}_{t>0}$ is defined by
    $$g^{\lambda_1, \dots, \lambda_n}(f)(x)
      =\left \{ \int_0^\infty \left\lvert t \frac{\partial}{\partial t} W_t^{\lambda_1, \dots, \lambda_n}(f)(x)  \right \rvert^2 \frac{dt}{t} \right \}^{1/2},
       \quad x \in (0, \infty)^n.$$
    By \cite[p. 111]{St1}, $g^{\lambda_1, \dots, \lambda_n}$ defines a bounded operator from
    $L^p((0,\infty)^n,\overset{n}{\underset{j=1}{\prod}} x_j^{2\lambda_j}dx)$ into itself, for every $1<p<\infty$. We complete this result analyzing the behavior
    of $g^{\lambda_1, \dots, \lambda_n}$ on $L^1((0,\infty)^n,$ $\overset{n}{\underset{j=1}{\prod}} x_j^{2\lambda_j}dx)$.

    \begin{Th}\label{gfunction}
      Let $\lambda_j>-1/2$, $j=1, \dots, n$. Then, the operator $g^{\lambda_1, \dots, \lambda_n}$ is bounded from
      $L^1((0,\infty)^n,\overset{n}{\underset{j=1}{\prod}} x_j^{2\lambda_j}dx)$ into $L^{1,\infty}((0,\infty)^n,\overset{n}{\underset{j=1}{\prod}} x_j^{2\lambda_j}dx)$.
    \end{Th}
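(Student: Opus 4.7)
The plan is to realise $g^{\lambda_1,\dots,\lambda_n}$ as the norm of a vector-valued singular integral operator and apply Calderón-Zygmund theory on the space of homogeneous type $((0,\infty)^n,|\cdot|,m_{\lambda_1,\dots,\lambda_n})$. The doubling property of the product weight on this metric space follows from the assumption $\lambda_j>-1/2$. Setting $\mathbb{B}=L^2((0,\infty),dt/t)$ and $T^{\lambda_1,\dots,\lambda_n}(f)(x,t)=t\partial_tW_t^{\lambda_1,\dots,\lambda_n}(f)(x)$, one has $g^{\lambda_1,\dots,\lambda_n}(f)(x)=\|T^{\lambda_1,\dots,\lambda_n}(f)(x,\cdot)\|_{\mathbb{B}}$, and the $L^p$-boundedness of $g^{\lambda_1,\dots,\lambda_n}$ for $1<p<\infty$ already quoted in the introduction gives the strong type $(2,2)$ of $T^{\lambda_1,\dots,\lambda_n}$ as a $\mathbb{B}$-valued operator; this is the starting input for the vector-valued Calderón-Zygmund theorem.

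Next I would analyse the vector-valued kernel
$$K^{\lambda_1,\dots,\lambda_n}(x,y)=\bigl\{t\partial_tW_t^{\lambda_1,\dots,\lambda_n}(x,y)\bigr\}_{t>0},$$
exploiting the product factorisation $W_t^{\lambda_1,\dots,\lambda_n}(x,y)=\prod_{j=1}^nW_t^{\lambda_j}(x_j,y_j)$, where each one-dimensional Bessel heat kernel has an explicit expression in terms of the modified Bessel function $I_{\lambda_j-1/2}$. Following the strategy of \cite{NS1}, I would split $K^{\lambda_1,\dots,\lambda_n}=K_{\mathrm{loc}}+K_{\mathrm{glob}}$ by means of the characteristic function of a suitable local region (for instance $|x-y|\le c\min_j(x_j+y_j)$) and its complement. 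For $K_{\mathrm{loc}}$ the task is to establish the size estimate
$$\|K_{\mathrm{loc}}(x,y)\|_{\mathbb{B}}\le\frac{C}{m_{\lambda_1,\dots,\lambda_n}(B(x,|x-y|))}$$
and the corresponding Hörmander-type smoothness estimate in $y$, using the standard asymptotics of $I_\nu$ at $0$ and $\infty$ combined with the doubling of $m_{\lambda_1,\dots,\lambda_n}$.

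For the global part, the strong Gaussian factor in each $W_t^{\lambda_j}(x_j,y_j)$ when the variables are not close lets one bound $\|K_{\mathrm{glob}}(x,y)\|_{\mathbb{B}}$ directly and show that the integrals $\int\|K_{\mathrm{glob}}(x,y)\|_{\mathbb{B}}\,dm_{\lambda_1,\dots,\lambda_n}(y)$ and $\int\|K_{\mathrm{glob}}(x,y)\|_{\mathbb{B}}\,dm_{\lambda_1,\dots,\lambda_n}(x)$ are finite uniformly in the free variable. A Minkowski/Schur argument then yields boundedness of $f\mapsto\int K_{\mathrm{glob}}(\cdot,y)f(y)\,dm_{\lambda_1,\dots,\lambda_n}(y)$ from $L^1(dm_{\lambda_1,\dots,\lambda_n})$ into $L^1(dm_{\lambda_1,\dots,\lambda_n};\mathbb{B})$, which is stronger than what we need. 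Combining local and global contributions via the vector-valued Calderón-Zygmund theorem on spaces of homogeneous type then produces the weak type $(1,1)$ bound for $g^{\lambda_1,\dots,\lambda_n}$.

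The main obstacle I anticipate is the precise verification of the size and smoothness conditions for $K_{\mathrm{loc}}$ in the multidimensional Bessel setting. In the one-dimensional case of \cite{NS1} one relies on rather delicate splittings in the integral representation of $t\partial_tW_t^\lambda(x,y)$ that are sensitive to the relative sizes of $x$, $y$ and $t$; here those arguments must be carried out simultaneously in the $n$ coordinates, differentiating the product, squaring, and integrating against $dt/t$ without spurious losses, while keeping track of the doubling behaviour of $m_{\lambda_1,\dots,\lambda_n}$ on balls that may meet the boundary $\{x_j=0\}$. Controlling $\partial_{y_k}\prod_jW_t^{\lambda_j}(x_j,y_j)$ and its $t$-derivative uniformly in the remaining variables is the technical heart of the argument.
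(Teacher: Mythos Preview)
Your overall strategy---realise $g^{\lambda_1,\dots,\lambda_n}$ as the $\mathbb{B}$-norm of a vector-valued singular integral and apply Calder\'on--Zygmund theory on the homogeneous space $((0,\infty)^n,|\cdot|,m_{\lambda_1,\dots,\lambda_n})$---is a legitimate route and is genuinely different from the paper's proof. The paper never invokes Calder\'on--Zygmund theory: it compares $g^{\lambda_1,\dots,\lambda_n}$ with the classical Euclidean $g$-function on the local region $\{x_j/2<y_j<2x_j,\ j=1,\dots,n\}$ via a dyadic-cube argument (Claim~1), and controls the difference piece by piece (Claim~2), reducing each piece after Minkowski to the explicit auxiliary operators $H_\infty^k$, $H_{loc}^{\alpha_1,\dots,\alpha_k}$, $L_{\alpha_1,\dots,\alpha_k}$ and $\mathcal{H}_{l,k}^{\alpha_1,\dots,\alpha_k}$ introduced in Section~2. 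Your approach is more conceptual; the paper's is elementary but requires a long case analysis in the $3^n$ regions determined by the relative positions of $x_j$ and $y_j$.

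There is, however, a concrete gap in your treatment of the global part. The Schur-type claim that $\int\|K_{\mathrm{glob}}(x,y)\|_{\mathbb{B}}\,dm_{\lambda_1,\dots,\lambda_n}(y)$ is uniformly bounded in $x$ is false already in one dimension. In the region $y>2x$ one has $\|K(x,y)\|_{\mathbb{B}}\sim y^{-2\lambda-1}$ (this follows from \eqref{E1} and the computation of $\int_0^\infty t|\partial_tW_t^\lambda(x,y)|^2\,dt$; the estimate \eqref{B10} gives the upper bound, and taking $x\to0^+$ shows it is sharp), and then
\[
\int_{2x}^\infty y^{-2\lambda-1}\,y^{2\lambda}\,dy=\int_{2x}^\infty\frac{dy}{y}=\infty.
\]
So the global kernel is not Schur-integrable in $y$, and the global part is not strong type $(1,1)$ by that mechanism. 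Two fixes are available. First, drop the local/global split and verify the standard Calder\'on--Zygmund size and H\"ormander estimates for $K^{\lambda_1,\dots,\lambda_n}$ on the full off-diagonal; the size bound $\|K(x,y)\|_{\mathbb{B}}\le C/m_{\lambda_1,\dots,\lambda_n}(B(x,|x-y|))$ does hold globally (for $y>2x$ one has $m(B(x,|x-y|))\sim y^{2\lambda+1}$, matching the kernel size above), but the smoothness estimate in the coordinates near the boundary $\{x_j=0\}$ needs genuine work. Second, follow the paper's handling of the far regions: the piece with $y_j>2x_j$ contributes a Hardy operator $H_\infty$, which is bounded $L^1\to L^1$ despite the failure of Schur, while the pieces with $y_j<x_j/2$ give operators that are only weak type $(1,1)$ and are controlled by $L_{\alpha_1,\dots,\alpha_k}$ and $\mathcal{H}_{l,k}^{\alpha_1,\dots,\alpha_k}$.
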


    The Bessel operator $\Delta_\lambda$ can be factorized as follows
    $$\Delta_\lambda = -x^{-2\lambda}\frac{d}{dx}\left(x^{2\lambda}\frac{d}{dx}\right)= \left(\frac{d}{dx}\right)^* \frac{d}{dx},$$
    where $(\dfrac{d}{dx})^*$ denotes the formal adjoint of $\dfrac{d}{dx}$ in $L^2((0,\infty)^n,\overset{n}{\underset{j=1}{\prod}} x_j^{2\lambda_j}dx)$.
    According to \cite[Section 16]{MS} and \cite{St1} we define, for every $i=1, \dots, n$, the i-th Riesz transform $R_i^{\lambda_1,\dots, \lambda_n}$
    associated with $\Delta_{\lambda_1, \dots, \lambda_n}$ by
    $$R_i^{\lambda_1,\dots, \lambda_n} f = \frac{\partial}{\partial x_i} \Delta_{\lambda_1, \dots, \lambda_n}^{-1/2} f, \quad f \in C_c^\infty ((0,\infty)^n).$$
    Here $\Delta_{\lambda_1, \dots, \lambda_n}^{-1/2}$ represents the negative square root of $ \Delta_{\lambda_1, \dots, \lambda_n}$ whose definition will be specified
    later (see Section~4). $L^p$-boundedness properties of the Riesz transforms $R_i^{\lambda_1, \dots, \lambda_n}$, $i=1, \dots, n$, are established in the following.

    \begin{Th}\label{Riesz}
      Let $\lambda_j>-1/2$, $j=1, \dots, n$, and $i=1, \dots, n$. For every $f \in C_c^\infty((0,\infty)^n)$, $\Delta_{\lambda_1, \dots, \lambda_n}^{-1/2}f$ admits
      derivative with respect to $x_i$ on almost all $(0,\infty)^n$ and
      $$\frac{\partial}{\partial x_i} \Delta_{\lambda_1, \dots, \lambda_n}^{-1/2}f(x)
        = \lim_{\varepsilon \rightarrow 0^+} \int_{\lvert x - y \rvert > \varepsilon}
          R_i^{\lambda_1, \dots, \lambda_n}(x,y) f(y) \prod_{j=1}^n y_j^{2\lambda_j} dy, \ \text{a.e. } x \in (0,\infty)^n, $$
      where
      $$ R_i^{\lambda_1, \dots, \lambda_n}(x,y)
         = \frac{1}{\sqrt{\pi}} \int_0^\infty \frac{\partial}{\partial x_i} W_t^{\lambda_1, \dots, \lambda_n}(x,y) \frac{dt}{\sqrt{t}}, \quad x, y \in (0,\infty)^n,$$
      and $W_t^{\lambda_1, \dots, \lambda_n}(x,y)$ represents the kernel of the operator $W_t^{\lambda_1, \dots, \lambda_n}$, for every $t>0$ (see \eqref{D1} for
      definitions).\\

      Moreover, the maximal operator $R_{i,*}^{\lambda_1, \dots, \lambda_n}$ defined by
      $$R_{i,*}^{\lambda_1, \dots, \lambda_n}(f)(x)
        = \sup_{\varepsilon > 0} \left \lvert
          \int_{\lvert x - y \rvert > \varepsilon} R_i^{\lambda_1, \dots, \lambda_n}(x,y) f(y) \prod_{j=1}^n y_j^{2\lambda_j} dy \right \rvert, \quad x \in (0,\infty)^n,$$
      is bounded from $L^p((0,\infty)^n,\overset{n}{\underset{j=1}{\prod}} x_j^{2\lambda_j}dx)$ into itself, for every $1<p<\infty$, and from
      $L^1((0,\infty)^n,$ $\overset{n}{\underset{j=1}{\prod}} x_j^{2\lambda_j}dx)$ into $L^{1,\infty}((0,\infty)^n,\overset{n}{\underset{j=1}{\prod}} x_j^{2\lambda_j}dx)$.
      The Riesz transform $R_i^{\lambda_1, \dots, \lambda_n}$ can be extended to $L^p((0,\infty)^n,\overset{n}{\underset{j=1}{\prod}} x_j^{2\lambda_j}dx)$ by
      $$R_i^{\lambda_1, \dots, \lambda_n}(f)(x)
        = \lim_{\varepsilon \rightarrow 0^+} \int_{\lvert x - y \rvert > \varepsilon}
          R_i^{\lambda_1, \dots, \lambda_n}(x,y) f(y) \prod_{j=1}^n y_j^{2\lambda_j} dy, \ \text{a.e. } x \in (0,\infty)^n,$$
      for every $f \in L^p((0,\infty)^n,\overset{n}{\underset{j=1}{\prod}} x_j^{2\lambda_j}dx)$, $1 \leq p < \infty$, that is a bounded operator
      from $L^p((0,\infty)^n,$ $\overset{n}{\underset{j=1}{\prod}} x_j^{2\lambda_j}dx)$ into itself, $1<p<\infty$, and from
      $L^1((0,\infty)^n,\overset{n}{\underset{j=1}{\prod}} x_j^{2\lambda_j}dx)$ into $L^{1,\infty}((0,\infty)^n,\overset{n}{\underset{j=1}{\prod}} x_j^{2\lambda_j}dx)$.
    \end{Th}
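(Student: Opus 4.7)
The plan is to reduce the analysis of $R_i^{\lambda_1,\dots,\lambda_n}$ to a Calderón–Zygmund argument on the space of homogeneous type $((0,\infty)^n,|\cdot|,dm_{\lambda_1,\dots,\lambda_n})$, exploiting the product structure of the heat kernel $W_t^{\lambda_1,\dots,\lambda_n}(x,y)=\prod_{j=1}^n W_t^{\lambda_j}(x_j,y_j)$ with $W_t^{\lambda_j}$ the one-dimensional Bessel heat kernel written in terms of modified Bessel functions $I_{\lambda_j-1/2}$.

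First, I would use the subordination formula $\Delta^{-1/2}_{\lambda_1,\dots,\lambda_n}=\pi^{-1/2}\int_0^\infty W_t^{\lambda_1,\dots,\lambda_n}\,t^{-1/2}dt$, which converges on $C_c^\infty((0,\infty)^n)$ because of the spectral gap reflected in the exponential decay of $W_t^{\lambda_1,\dots,\lambda_n}f$ for large $t$. Differentiating under the integral and then applying Fubini away from the diagonal identifies the kernel $R_i^{\lambda_1,\dots,\lambda_n}(x,y)$ as in the statement. To make this rigorous and to obtain the existence of the principal value a.e., I would truncate the $t$-integral at $\varepsilon$ and $1/\varepsilon$, integrate by parts in the $t$ variable using $\partial_t W_t^{\lambda_1,\dots,\lambda_n}=-\Delta_{\lambda_1,\dots,\lambda_n}W_t^{\lambda_1,\dots,\lambda_n}$, and pass to the limit using size estimates for $\partial_{x_i}W_t^{\lambda_1,\dots,\lambda_n}(x,y)$.

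Next I would prove the size and gradient bounds for $R_i^{\lambda_1,\dots,\lambda_n}(x,y)$ needed for Calderón–Zygmund theory with respect to $m_{\lambda_1,\dots,\lambda_n}$, namely
\[
|R_i^{\lambda_1,\dots,\lambda_n}(x,y)|\lesssim \frac{1}{m_{\lambda_1,\dots,\lambda_n}(B(x,|x-y|))}
\]
and an analogous gradient bound in $y$, but only in a \emph{local} region $N=\{(x,y):|x-y|\le c\min_j(x_j+y_j)\}$ where the Bessel kernel is comparable to a standard Gaussian convolution kernel. On $N$ the key input is the asymptotic $I_\nu(z)\sim(2\pi z)^{-1/2}e^z$ for large $z$, which turns $W_t^{\lambda_j}(x_j,y_j)$ into a Gaussian-type object after cancelling the factors $(x_jy_j)^{\lambda_j}$ against the product measure. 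Combined with the Genebashvili–Gogatishvili–Kokilashvili–Krbec vector-valued CZ theorem on spaces of homogeneous type, this yields the $L^p$ and weak-$(1,1)$ bounds for the local part of $R_{i,*}^{\lambda_1,\dots,\lambda_n}$.

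Finally, for the global part $R_{i,\mathrm{glob}}^{\lambda_1,\dots,\lambda_n}(x,y)=R_i^{\lambda_1,\dots,\lambda_n}(x,y)\mathbbm{1}_{N^c}(x,y)$, I expect to reduce the $t$-integral, using the small-argument expansion $I_\nu(z)\sim z^\nu/(2^\nu\Gamma(\nu+1))$, to an explicit positive kernel which I would dominate by a sum of operators of the form $f\mapsto \int_{(0,\infty)^n} K(x,y)|f(y)|dm_{\lambda_1,\dots,\lambda_n}(y)$ where $\sup_x\int K(x,y)dm(y)<\infty$ and the symmetric condition holds, thus yielding strong-$(p,p)$ and weak-$(1,1)$ bounds trivially. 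The $L^p$-boundedness of $R_i^{\lambda_1,\dots,\lambda_n}$ for $1<p<\infty$ then follows from that of $R_{i,*}^{\lambda_1,\dots,\lambda_n}$ and density, while the weak-$(1,1)$ definition on $L^1$ is obtained by the standard extension from $C_c^\infty$ using the weak-$(1,1)$ bound for $R_{i,*}^{\lambda_1,\dots,\lambda_n}$. The main obstacle I anticipate is the delicate $t$-integral analysis of $\partial_{x_i}W_t^{\lambda_1,\dots,\lambda_n}$ needed for the gradient estimate in the local region, since $\partial_{x_i}$ acting on the $I_{\lambda_i-1/2}$ factor produces a sum of Bessel functions of neighbouring indices whose cancellations must be tracked carefully across the different ranges of $\lambda_j>-1/2$; the $n$-dimensional mixing of variables, however, reduces to the one-dimensional estimates thanks to the product form of $W_t^{\lambda_1,\dots,\lambda_n}$.
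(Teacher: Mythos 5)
Your overall skeleton (subordination, identification of the kernel, local Calder\'on--Zygmund region plus global region treated by positive kernel bounds) is in the right spirit, but two of your steps fail as stated. First, the justification of the subordination formula is wrong: the multidimensional Bessel operator has \emph{no} spectral gap and the semigroup decays only polynomially. By \eqref{E1}, $W_t^{\lambda}(x,y)\sim t^{-\lambda-1/2}\,2^{-2\lambda}\Gamma(\lambda+1/2)^{-1}$ when $xy\le t$, so for $f\in C_c^\infty((0,\infty)^n)$ one has $W_t^{\lambda_1,\dots,\lambda_n}f(x)\sim c\,t^{-\sum_j(\lambda_j+1/2)}\int f\,dm_{\lambda_1,\dots,\lambda_n}$ as $t\to\infty$, and the integral $\int_0^\infty W_t^{\lambda_1,\dots,\lambda_n}f\,t^{-1/2}\,dt$ diverges whenever $\sum_{j=1}^n(\lambda_j+1/2)\le 1/2$, which occurs for $\lambda_j$ close to $-1/2$ (only $\lambda_j>-1/2$ is assumed). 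This is exactly why the paper defines $\Delta_{\lambda_1,\dots,\lambda_n}^{-\beta}$, $0<\beta<\sum_j(\lambda_j+1/2)+1$, with the correction term $\chi_{(1,\infty)}(t)\,t^{-\sum_j(\lambda_j+1/2)}\prod_j\big(2^{2\lambda_j}\Gamma(\lambda_j+1/2)\big)^{-1}\int f\,dm_{\lambda_1,\dots,\lambda_n}$ subtracted inside the $t$-integral; the correction is constant in $x$, so it does not affect the Riesz transform, and the paper's Remark reconciles the two definitions when the naive one converges. The kernel $R_i^{\lambda_1,\dots,\lambda_n}(x,y)$ itself is unproblematic (the $x_i$-derivative gains a factor $t^{-1}$ at infinity, cf.\ \eqref{X1}), but the operator-level formula you start from is not defined on all of $C_c^\infty((0,\infty)^n)$ in the full parameter range, and the ``exponential decay'' claim is false for every choice of the $\lambda_j$. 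Relatedly, your treatment of the a.e.\ differentiability and the principal value is only sketched; the paper needs the rather delicate Proposition~\ref{Proposition 4.1} (differentiation under the integral sign against the \emph{difference} kernel $R_i^{\lambda_1,\dots,\lambda_n}-\mathcal{R}_i^{\lambda_1,\dots,\lambda_n}$, with absolute convergence proved via the expansion \eqref{34} and the remainder estimates \eqref{37}--\eqref{38}) before invoking the classical Euclidean result.

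Second, your global analysis is too optimistic. On the region $\{y_j<x_j/2,\ j=1,\dots,n\}$ the bounds \eqref{A3} and \eqref{C14} give $|R_1^{\lambda_1,\dots,\lambda_n}(x,y)|\le C\big(\sum_j x_j\big)^{-2\sum_j(\lambda_j+1/2)}$, and this is sharp in size; but $\int_{\{x_j>2y_j,\,\forall j\}}\big(\sum_j x_j\big)^{-2\sum_j(\lambda_j+1/2)}\,dm_{\lambda_1,\dots,\lambda_n}(x)$ diverges logarithmically (the measure is homogeneous of degree $2\sum_j(\lambda_j+1/2)$), so the symmetric Schur condition you posit fails, and indeed the dominating operator --- the paper's $L_{\lambda_1,\dots,\lambda_n}$, a Hardy-type averaging operator --- is of weak type $(1,1)$ but \emph{not} of strong type $(1,1)$. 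Hence the weak $(1,1)$ bound for the global part is not ``trivial''; it is the heart of the matter and requires the genuinely weak-type Hardy estimates the paper assembles in Section~2 ($L_{\alpha_1,\dots,\alpha_k}$, $H_\infty^k$, $H_{loc}$, and the Nowak--Sj\"ogren operators $\mathcal{H}_{l,k}$ from \cite{NS1}, glued coordinatewise via \cite[Proposition 1]{Din}). For $p>1$ these Hardy pieces are strong $(p,p)$, so that half of your claim survives. Finally, note that your local step is a genuinely different route from the paper's: rather than Calder\'on--Zygmund theory on the homogeneous space, the paper compares the local kernel factor by factor with the Euclidean one (estimates \eqref{B14}--\eqref{B15} and \cite[Lemma 3]{BHNV}) and transfers the classical boundedness of the Euclidean Riesz maximal operator on dyadic boxes $\prod_j[2^{m_j},2^{m_j+1})$, where $\prod_j(x_jy_j)^{-\lambda_j}$ is comparable to a constant; that route is viable too, but with the two repairs above it is no shorter than the paper's.
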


    We now recall some definitions and properties that will be useful in the sequel. If $J_\nu$ denotes the Bessel function of the first kind and order $\nu >-1$,
    we have that, when $\lambda>-1/2$,
    $$\Delta_{\lambda,x} \left( (xz)^{-\lambda+1/2} J_{\lambda - 1/2}(xz) \right)
      = z^2 (xz)^{-\lambda+1/2} J_{\lambda - 1/2}(xz), \quad x,z \in (0,\infty) .$$
    Then, the heat semigroup $\{W_t^\lambda\}_{t>0}$ generated by $\Delta_{\lambda,x}$ is given by
    $$W_t^\lambda(f)(x) = \int_0^\infty W_t^\lambda(x,y)f(y)  y^{2\lambda} dy,$$
    where
    $$W_t^\lambda(x,y)
      = \int_0^\infty e^{-tz^2} (xz)^{-\lambda+1/2} J_{\lambda - 1/2}(xz) (yz)^{-\lambda+1/2} J_{\lambda - 1/2}(yz) z^{2\lambda} dz, \ t,x,y \in (0,\infty).$$
    Moreover, according to \cite[p. 195]{W}, we can write
    $$W_t^\lambda(x,y)=\frac{(xy)^{-\lambda+1/2}}{2t}I_{\lambda-1/2}\left( \frac{x y}{2t}\right) e^{-(x^2+y^2)/4t}, \quad t,x,y \in (0,\infty),$$
    where $I_\nu$ represents the modified Bessel function of the first kind and order $\nu > -1$.\\

    The heat semigroup $\{W_t^{\lambda_1, \dots, \lambda_n}\}_{t>0}$ associated with the multidimensional Bessel operator $\Delta_{\lambda_1, \dots, \lambda_n}$
    is defined by
    $$W_t^{\lambda_1, \dots, \lambda_n}(f)(x) = \int_{(0,\infty)^n} W_t^{\lambda_1, \dots, \lambda_n}(x,y)f(y) \prod_{j=1}^n y_j^{2\lambda_j} dy,$$
    being
    \begin{equation}\label{D1}
      W_t^{\lambda_1, \dots, \lambda_n}(x,y) = \prod_{j=1}^n W_t^{\lambda_j}(x_j,y_j), \quad x=(x_1,\dots, x_n),\,\,\, y=(y_1, \dots, y_n) \in (0,\infty)^n.
    \end{equation}

    The following properties of the modified Bessel function $I_\nu$, $\nu>-1$, will be used (see \cite{L} and \cite{W}). $I_\nu$ admits the following series representation
    \begin{equation}\label{S1}
      I_\nu (z) = \sum_{k=0}^\infty \frac{\left( z/2 \right)^{2k+\nu}}{k! \ \Gamma(\nu + k +1)}, \quad z \in (0,\infty).
    \end{equation}
    Then,
    \begin{equation}\label{E1}
      I_\nu (z) \sim \frac{1}{2^\nu \Gamma(\nu +1)} z^\nu, \quad \text{as } z \rightarrow 0^+.
    \end{equation}
    Also,
    \begin{equation}\label{E2}
      I_\nu (z) = \frac{e^z}{\sqrt{2 \pi z}}\left( \sum_{k=0}^m (-1)^k [\nu,k] (2z)^{-k} + O(\frac{1}{z^{m+1}}) \right), \quad m \in \mathbb{N}, \ z \in (0,\infty),
    \end{equation}
    where $[\nu,0]=1$ and
    $$[\nu,k]=\frac{(4\nu^2-1)(4\nu^2-3^2) \cdots (4\nu^2-(2k-1)^2)}{2^{2k}\Gamma(k+1)}, \quad k=1,2, \dots \ .$$
    From \eqref{S1} it is easy to deduce that
    \begin{equation}\label{E3}
      \frac{d}{dz}\left( z^{-\nu} I_\nu(z) \right) = z^{-\nu} I_{\nu+1}(z), \quad z \in (0,\infty).
    \end{equation}
    Asymptotic expansion \eqref{E2} allows us to connect the operators (maximal operators, g-functions and Riesz transforms) associated with
    $\Delta_{\lambda_1, \dots, \lambda_n}$ with the corresponding operators in the classical Euclidean setting in a local region that is close to the diagonal
    in $(0,\infty)^n$. This is a crucial point in the proof of our theorems. The proof of Theorems~\ref{gfunction} and \ref{Riesz} is more involved that the one of
    the Theorem~\ref{maximal} because the maximal operator $W_*^{\lambda_1, \dots, \lambda_n}$ is positive while Littlewood-Paley g-function and Riesz transforms are
    not positive operators.\\

    This paper is organized as follows. In Section~2 we prove Theorem~\ref{maximal}, Theorem~\ref{gfunction} is showed in Section~3 and a proof of Theorem~\ref{Riesz}
    is presented in Section~4.\\

    Throughout this paper we will use repeatedly without saying it that, for every $k \in \mathbb{N}$, there exists $C_k>0$ such that $z^k e^{-z} \leq C_k$, $z>0$.
    Always by $C$ we denote a suitable positive constant that can change from one line to the other one.

  \section{Proof of Theorem~\ref{maximal}}

    In this section we prove that the maximal operator
    $$W_*^{\lambda_1, \dots, \lambda_n}(f)(x)
      = \sup_{t>0} \left \lvert \int_{(0,\infty)^n} \prod_{j=1}^n W_t^{\lambda_j}(x_j,y_j)f(y) \prod_{j=1}^n  y_j^{2\lambda_j} dy  \right \rvert,
      \quad x \in (0,\infty)^n, $$
    where $\lambda_j > -1/2$, $j=1, \dots, n$, is bounded from $ L^1((0,\infty)^n,\overset{n}{\underset{j=1}{\prod}} x_j^{2\lambda_j}dx)$
    into $ L^{1,\infty}((0,\infty)^n,$ $\overset{n}{\underset{j=1}{\prod}} x_j^{2\lambda_j}dx)$.\\

    By $\mathbb{W}_t$ we denote the classical heat kernel in one
    dimension, that is,
    $$\mathbb{W}_t(x,y)=\frac{1}{2\sqrt{\pi t}} e^{-(x-y)^2/4t}, \quad t \in (0,\infty), \ x,y \in \mathbb{R}.$$
    We will split the region $(0,\infty)^n$ of integration in
    several parts and study each of the operators that appear
    associated with every part.\\

    Firstly we analyze some auxiliary operators that will be
    useful in the sequel. The Hardy type operator $H_\infty$
    defined by
    $$H_\infty(g)(x)=\int_x^\infty \frac{g(y)}{y} dy, \quad x \in (0,\infty),$$
    is bounded from $L^1((0,\infty),x^{2\alpha}dx)$ into itself
    when $\alpha>-1/2$. Then, for every $k \in \mathbb{N}$, the
    operator
    $$H_\infty^k(g)(x)
      =\int_{x_1}^\infty \cdots  \int_{x_k}^\infty \frac{g(y_1,y_2, \dots, y_k)}{y_1 y_2  \dots  y_k} dy_k \cdots dy_1,
      \quad x=(x_1, \dots, x_k) \in (0,\infty)^k,$$
    is bounded from $\displaystyle L^1((0,\infty)^k, \overset{k}{\underset{j=1}{\prod}} x_j^{2\alpha_j} dx)$ into
    itself provided that  $\alpha_j>-1/2$, $j=1, \dots, k$.\\

    Suppose that $k \in \mathbb{N}$ and $\alpha_j>-1/2$, $j=1, \dots, k$.
    We define the operator $L_{\alpha_1, \dots, \alpha_k}$ by
    $$L_{\alpha_1, \dots, \alpha_k}(g)(x)
      =\frac{1}{\left( \overset{k}{\underset{j=1}{\sum}} x_j \right)^{2 \overset{k}{\underset{j=1}{\sum}} (\alpha_j + 1/2)}}
      \int_0^{x_1} \cdots \int_0^{x_k} g(y) \prod_{j=1}^k y_j^{2\alpha_j} dy,
      \quad x \in (0,\infty)^k.$$
    $L_{\alpha_1, \dots, \alpha_k}$ is bounded from $ L^1((0,\infty)^k,\overset{k}{\underset{j=1}{\prod}} x_j^{2\alpha_j}dx)$
    into $ L^{1,\infty}((0,\infty)^k,\overset{k}{\underset{j=1}{\prod}} x_j^{2\alpha_j}dx)$. Indeed, let $g \in
    L^1((0,\infty)^k,\overset{k}{\underset{j=1}{\prod}}
    x_j^{2\alpha_j}dx)$
    and $\gamma >0$. By denoting $m_{\alpha_1, \dots, \alpha_k}$
    the measure $\overset{k}{\underset{j=1}{\prod}}x_j^{2\alpha_j}dx$ we have that
    \begin{align*}
      m&_{\alpha_1, \dots, \alpha_k}  (\{x \in (0,\infty)^k : \lvert L_{\alpha_1, \dots, \alpha_k}(g)(x)  \rvert > \gamma\})\\
       & \leq  m_{\alpha_1, \dots, \alpha_k} \Big(\Big\{x \in (0,\infty)^k :
          \left( \overset{k}{\underset{j=1}{\sum}} x_j \right)^{-2 \overset{k}{\underset{j=1}{\sum}} (\alpha_j + 1/2)}
          \lVert g \rVert_{L^1((0,\infty)^k,\overset{k}{\underset{j=1}{\prod}} x_j^{2\alpha_j}dx)} > \gamma\Big\}\Big)\\
       & =  m_{\alpha_1, \dots,\alpha_k}\Big(\Big\{x \in (0,\infty)^k : \sum_{j=1}^k x_j
         < \left( \frac{1}{\gamma}\lVert g \rVert_{L^1((0,\infty)^k,\overset{k}{\underset{j=1}{\prod}} x_j^{2\alpha_j}dx)}  \right)
         ^{1/(2 \overset{k}{\underset{j=1}{\sum}} (\alpha_j + 1/2))}\Big\}\Big)\\
       & \leq  m_{\alpha_1, \dots,\alpha_k}(Q),
    \end{align*}
    where $Q=\overset{k}{\underset{j=1}{\prod}} [0,( \frac{1}{\gamma} \lVert g \rVert_{L^1((0,\infty)^k,\overset{k}{\underset{j=1}{\prod}}
    x_j^{2\alpha_j}dx)} )^{1/(2 \overset{k}{\underset{j=1}{\sum}} (\alpha_j + 1/2))}]$.\\

    Since,
    $$m_{\alpha_1, \dots,\alpha_k}(Q) \leq \frac{C}{\gamma} \lVert g \rVert_{L^1((0,\infty)^k,\overset{k}{\underset{j=1}{\prod}} x_j^{2\alpha_j}dx)}, $$
    it concludes that
    $$m_{\alpha_1, \dots, \alpha_k}(\{x \in (0,\infty)^k : \lvert L_{\alpha_1, \dots, \alpha_k}(g)(x)  \rvert > \gamma\})
      \leq \frac{C}{\gamma} \lVert g \rVert_{L^1((0,\infty)^k,\overset{k}{\underset{j=1}{\prod}} x_j^{2\alpha_j}dx)}.$$

    Let $\alpha_j \in \mathbb{R}$, $j=1, \dots,k$, with $k \in \mathbb{N}$, we define the local Hardy type operator $H_{loc}^{\alpha_1, \dots, \alpha_k}$ by
    $$H_{loc}^{\alpha_1, \dots, \alpha_k}(g)(x)= \frac{1}{\overset{k}{\underset{j=1}{\prod}}x_j^{2 \alpha_j +1}}
      \int_{x_1/2}^{2x_1} \cdots \int_{x_k/2}^{2x_k} g(y) \prod_{j=1}^k y_j^{2\alpha_j} dy,
      \quad x=(x_1, \dots, x_k) \in (0,\infty)^k.$$
    It is not hard to see that $H_{loc}^{\alpha_1, \dots,\alpha_k}$ is
    bounded from $L^1((0,\infty)^k,\overset{k}{\underset{j=1}{\prod}} x_j^{2\alpha_j}dx)$
    into itself.\\

    Also we consider the local maximal operator $\mathbb{W}_{*,loc}^{\alpha_1, \dots,\alpha_k}$ defined by
    $$\mathbb{W}_{*,loc}^{\alpha_1, \dots,\alpha_k}(g)(x)
       = \sup_{t>0} \left \lvert \int_{x_1/2}^{2x_1} \cdots \int_{x_k/2}^{2x_k}
         \prod_{j=1}^k (x_jy_j)^{-\alpha_j} \mathbb{W}_t(x_j,y_j)g(y) \prod_{j=1}^k  y_j^{2\alpha_j} dy  \right \rvert,
         \quad x \in (0,\infty)^k.$$
    $\mathbb{W}_{*,loc}^{\alpha_1, \dots,\alpha_k}$ is
    bounded from $L^1((0,\infty)^k,\overset{k}{\underset{j=1}{\prod}} x_j^{2\alpha_j}dx)$ into
    $L^{1,\infty}((0,\infty)^k,\overset{k}{\underset{j=1}{\prod}} x_j^{2\alpha_j}dx)$. Indeed, for every $j=(j_1, \dots, j_k) \in \mathbb{Z}^k$, we
    denote by $Q_j$ the dyadic cube
    $Q_j=\overset{k}{\underset{i=1}{\prod}}[2^{j_i},2^{j_i+1}]$.
    Also, for every $x \in (0,\infty)^k$, $D(x)$ represents the
    following set
    $$D(x)=\{y=(y_1, \dots, y_k) \in (0,\infty)^k : x_j/2 < y_j < 2x_j, \ j=1, \dots, k\}.$$
    Note that, if $x \in Q_j$ and $y \in D(x)$, then $2^{j_i-1} \leq y_i \leq 2^{j_i+2}$, $i=1, \dots,k$.\\

    Hence, it has
    \begin{align*}
      \mathbb{W}_{*,loc}^{\alpha_1, \dots,\alpha_k}(g)(x)
        & \leq \sup_{t>0} \int_{2^{j_1-1}}^{2^{j_1+2}} \cdots \int_{2^{j_k-1}}^{2^{j_k+2}}
          \prod_{i=1}^k (x_iy_i)^{-\alpha_i} \mathbb{W}_t(x_i,y_i) \lvert g(y) \rvert \prod_{i=1}^k  y_i^{2\alpha_i} dy\\
        & \leq C \sup_{t>0} \int_{2^{j_1-1}}^{2^{j_1+2}} \cdots \int_{2^{j_k-1}}^{2^{j_k+2}}
          \prod_{i=1}^k \mathbb{W}_t(x_i,y_i) \lvert g(y) \rvert dy, \quad x \in Q_j,
    \end{align*}
    where $C>0$ does not depend on $j \in \mathbb{Z}^k$.\\

    If we denote, for every $j=(j_1, \dots, j_k) \in \mathbb{Z}^k$, by
    $D_j$ the set
    $$D_j=\{y=(y_1, \dots, y_k) \in (0,\infty)^k : 2^{j_i-1} < y_j < 2^{j_i+2}, \quad i=1, \dots, k\},$$
    since, as it is well known, the maximal operator
    $$\mathbb{W}_*(f)(x)= \sup_{t>0} \left \lvert \int_{\mathbb{R}^k} \prod_{i=1}^k \mathbb{W}_t(x_i,y_i) f(y) dy \right \rvert, \quad x \in \mathbb{R}^k,$$
    is bounded from $L^1(\mathbb{R}^k,dx)$ into $L^{1,\infty}(\mathbb{R}^k,dx)$, it follows, for every $\gamma >0$, that
    \begin{align*}
      m_{\alpha_1, \dots, \alpha_k}&  (\{ x \in (0,\infty)^k : \mathbb{W}_{*,loc}^{\alpha_1, \dots,\alpha_k}(g)(x) > \gamma\})\\
       & =    \sum_{j \in \mathbb{Z}^k} m_{\alpha_1, \dots, \alpha_k}
              (\{ x \in Q_j : \mathbb{W}_{*,loc}^{\alpha_1, \dots,\alpha_k}(g)(x) > \gamma\})\\
       & \leq C \sum_{j \in \mathbb{Z}^k} 2^{\overset{k}{\underset{i=1}{\sum}}2j_i\alpha_i}m_0^{(k)}
              (\{ x \in Q_j : \mathbb{W}_{*,loc}^{\alpha_1, \dots,\alpha_k}(g)(x) > \gamma\})\\
       & \leq C \sum_{j \in \mathbb{Z}^k} 2^{\overset{k}{\underset{i=1}{\sum}}2j_i\alpha_i}m_0^{(k)}
              (\{ x \in \mathbb{R}^k : \mathbb{W}_*(\chi_{D_j}\lvert g \rvert)(x) > \gamma M\})\\
       & \leq \frac{C}{\gamma} \sum_{j \in \mathbb{Z}^k} 2^{\overset{k}{\underset{i=1}{\sum}}2j_i\alpha_i} \int_{D_j} \lvert g(y) \rvert dy\\
       & \leq \frac{C}{\gamma} \sum_{j \in \mathbb{Z}^k}
              \int_{D_j} \lvert g(y) \rvert \prod_{i=1}^k y_i^{2\alpha_i} dy\\
       & \leq \frac{C}{\gamma} \lVert g(y) \rVert_{L^1((0,\infty)^k,\overset{k}{\underset{i=1}{\prod}} x_i^{2\alpha_i}dx)},
         \quad g \in L^1((0,\infty)^k,\overset{k}{\underset{i=1}{\prod}} x_i^{2\alpha_i}dx).
    \end{align*}
    Here $m_0^{(k)}$ denotes the usual Lebesgue measure on $\mathbb{R}^k$ and $M$ is a suitable positive constant that does not depend on $j \in \mathbb{Z}^k$.\\

    We now define, for every $1 \leq l \leq k$, $l,k \in \mathbb{N}$, and $\alpha_j > -1/2$, $j=1, \dots, k$,
    the operator
    \begin{align*}
      \mathcal{H}_{l,k}^{\alpha_1, \dots, \alpha_k}(g)(x)
         = & \int_0^{x_1/2} \cdots \int_0^{x_l/2}\int_{x_{l+1}/2}^{2x_{l+1}} \cdots \int_{x_k/2}^{2x_k}  \prod_{j=l+1}^k(x_jy_j)^{-\alpha_j}\\
           & \times\frac{g(y)}{\left( \overset{k}{\underset{j=1}{\sum}} (x_j-y_j)^2 \right)^\varepsilon} \prod_{j=1}^k y_j^{2\alpha_j} dy,
             \quad x=(x_1, \dots, x_k) \in (0,\infty)^k,
    \end{align*}
    where $\varepsilon = \overset{l}{\underset{j=1}{\sum}}(\alpha_j+1/2)+(k-l)/2$.
    By proceeding as in \cite[Case 3]{NS1}, we can prove that $\mathcal{H}_{l,k}^{\alpha_1, \dots,\alpha_k}$
    is bounded from $L^1((0,\infty)^k,\overset{k}{\underset{j=1}{\prod}} x_j^{2\alpha_j}dx)$
    into $L^{1,\infty}((0,\infty)^k,\overset{k}{\underset{j=1}{\prod}} x_j^{2\alpha_j}dx)$.\\

    In order to study the operator $W_*^{\lambda_1, \dots, \lambda_n}$ we use the estimates \eqref{E1}
    and \eqref{E2} and divide, for every $j=1, \dots, n$, the region $(0,\infty)\times(0,\infty)$
    in three parts: one of them is close to the diagonal $\{(x_j,y_j) : x_j=y_j\}$ and the other two ones are
    far from the diagonal.\\

    Our result will be proved when we see that, for every $0 \leq l \leq m \leq n$, $l,m,n \in \mathbb{N}$ the operator
    $S_{l,m}^{\lambda_1, \dots,\lambda_n}$ defined by
    \begin{align*}
      S_{l,m}^{\lambda_1, \dots,\lambda_n}(f)(x)
        = &  \sup_{t>0} \int_0^{x_1/2} \cdots \int_0^{x_l/2}\int_{x_{l+1}/2}^{2x_{l+1}} \cdots \int_{x_m/2}^{2x_m}\int_{2x_{m+1}}^\infty \cdots \int_{2x_n}^\infty \\
          &  \prod_{j=1}^n W_t^{\lambda_j}(x_j,y_j) \lvert f(y) \rvert \prod_{j=1}^n y_j^{2\lambda_j}  dy,
             \quad x=(x_1, \dots, x_n) \in (0,\infty)^n,
    \end{align*}
    is bounded from $L^1((0,\infty)^n,\overset{n}{\underset{j=1}{\prod}} x_j^{2\alpha_j}dx)$
    into $L^{1,\infty}((0,\infty)^n,\overset{n}{\underset{j=1}{\prod}} x_j^{2\alpha_j}dx)$.\\

    According to \cite[Lemma 4]{BHNV}, we have that
    \begin{equation}\label{A0}
      0 \leq W_t^\lambda(x,y) \leq C y^{-2\lambda-1}, \quad  2x<y<\infty \text{ and } t>0,
    \end{equation}
    provided that $\lambda>-1/2$. Then, for every $0 \leq l \leq m \leq n$, $l,m,n \in \mathbb{N}$,
    \begin{align*}
      S_{l,m}^{\lambda_1, \dots,\lambda_n}(f)(x)
        \leq & C \sup_{t>0}  \int_0^{x_1/2} \cdots \int_0^{x_l/2}\int_{x_{l+1}/2}^{2x_{l+1}} \cdots \int_{x_m/2}^{2x_m}  \prod_{j=1}^m W_t^{\lambda_j}(x_j,y_j)\\
             & \times \left( \int_{2x_{m+1}}^\infty \cdots \int_{2x_n}^\infty \frac{\lvert f(y) \rvert}{y_{m+1} \cdots y_n} dy_n \cdots y_{m+1}\right)
               \prod_{j=1}^m y_j^{2\lambda_j} dy_m  \cdots  dy_1, \ x \in (0,\infty)^n.
    \end{align*}

    Hence, since the operator $H_\infty^{n-m}$ is bounded from $L^1((0,\infty)^{n-m},\overset{n-m}{\underset{j=1}{\prod}} x_j^{2\alpha_j}dx)$
    into itself, for every $0 \leq m <n$, $m \in \mathbb{N}$, by \cite[Proposition 1]{Din},
    we must show that, for every $0 \leq l \leq m \leq n$, $l,m \in \mathbb{N}$, and $\alpha_j>-1/2$, $j=1, \dots, n$, the
    operator $\mathcal{S}_{l,m}^{\alpha_1, \dots, \alpha_m}$ defined by
    \begin{align*}
      \mathcal{S}_{l,m}^{\alpha_1, \dots, \alpha_m}(g)(x)
        = & \sup_{t>0} \int_0^{\frac{x_1}{2}} \cdots \int_0^{\frac{x_l}{2}}\int_{\frac{x_{l+1}}{2}}^{2x_{l+1}} \cdots \int_{\frac{x_m}{2}}^{2x_m} \prod_{j=1}^m W_t^{\alpha_j}(x_j,y_j)
            \lvert g(y) \rvert \prod_{j=1}^m y_j^{2\alpha_j} dy,  x \in (0,\infty)^m,
    \end{align*}
    is bounded from $L^1((0,\infty)^m,\overset{m}{\underset{j=1}{\prod}} x_j^{2\alpha_j}dx)$
    into $L^{1,\infty}((0,\infty)^m,\overset{m}{\underset{j=1}{\prod}} x_j^{2\alpha_j}dx)$.\\

    Let $l,m \in \mathbb{N}$ such that $0 \leq l \leq m \leq n$ and let $\alpha_j>-1/2$, $j=1, \dots, m$.
    According to \eqref{E2} we get
    \begin{align}\label{A1}
      0 \leq W_t^\lambda(x,y) & \leq C \frac{(xy)^{-\lambda}}{\sqrt{t}}e^{-\frac{(x-y)^2}{4t}}
                                \leq C \frac{(xy)^{-\lambda}}{\sqrt{t}}e^{-\frac{x^2}{16t}}
                                \leq C \frac{(xy)^{-\lambda-1/2}}{\sqrt{t}}xe^{-\frac{x^2}{16t}}
                                \leq C \frac{e^{-\frac{x^2}{20t}}}{t^{\lambda + 1/2}},
    \end{align}
    provided that $0<y<x/2<\infty$, $t>0$, $xy/t \geq 1$ and $\lambda > -1/2$.\\

    Also by \eqref{E1} it obtains
    \begin{equation}\label{A2}
      0 \leq W_t^\lambda(x,y) \leq C \frac{1}{t^{\lambda + 1/2}}e^{-(x^2+y^2)/4t}
                              \leq C \frac{1}{t^{\lambda + 1/2}}e^{-x^2/4t},
    \end{equation}
    when $t,x,y \in (0,\infty)$, $xy/t \leq 1$ and $\lambda >-1/2$.\\

    By combining \eqref{A1} and \eqref{A2} it follows that, for every $\lambda > -1/2$,
    \begin{equation}\label{A3}
      0 \leq W_t^\lambda(x,y) \leq C \frac{1}{t^{\lambda + 1/2}}e^{-x^2/20t}, \quad 0<y<x/2\,\,\, and \,\,\,t>0.
    \end{equation}

    Then,
    \begin{align*}
      \mathcal{S}_{m,m}^{\alpha_1, \dots, \alpha_m}(g)(x)
        & \leq C \int_0^{x_1/2} \cdots \int_0^{x_m/2} \sup_{t>0} \frac{e^{-\overset{m}{\underset{j=1}{\sum}} x_j^2/20t}}{t^{\overset{m}{\underset{j=1}{\sum}}(\alpha_j + 1/2)}}
          \lvert g(y) \rvert \prod_{j=1}^m y_j^{2\alpha_j}dy\\
        & \leq C \frac{1}{\left( \overset{m}{\underset{j=1}{\sum}} x_j^2 \right)^{\overset{m}{\underset{j=1}{\sum}}(\alpha_j + 1/2)}} \int_0^{x_1/2} \cdots \int_0^{x_m/2}
          \lvert g(y) \rvert \prod_{j=1}^m y_j^{2\alpha_j}dy\\
        & \leq C L_{\alpha_1, \dots, \alpha_m}(\lvert g \rvert)(x), \quad x \in (0, \infty)^m.
    \end{align*}
    Hence, $\mathcal{S}_{m,m}^{\alpha_1, \dots, \alpha_m}$ is a bounded operator from $L^1((0,\infty)^m,\overset{m}{\underset{j=1}{\prod}} x_j^{2\alpha_j}dx)$
    into $L^{1,\infty}((0,\infty)^m,\overset{m}{\underset{j=1}{\prod}} x_j^{2\alpha_j}$ $dx)$.\\

    We now analyze the operator $\mathcal{S}_{0,m}^{\alpha_1, \dots, \alpha_m}$. Again, according to \eqref{E1} and \eqref{E2}, it gets,
    if $\lambda>-1/2$,
    \begin{equation}\label{A4}
      0 \leq W_t^\lambda(x,y) \leq C \frac{(xy)^{-\lambda}}{\sqrt{t}}e^{-(x-y)^2/4t}, \quad t,x,y \in (0,\infty) \text{ and } \frac{xy}{t}\geq 1,
    \end{equation}
    and
    \begin{equation}\label{A5}
      0 \leq W_t^\lambda(x,y) \leq C \frac{e^{-(x^2+y^2)/4t}}{t^{\lambda + 1/2}}
                              \leq C \frac{1}{x^{2\lambda + 1}}, \quad t,x,y \in (0,\infty) \text{ and } \frac{xy}{t}\leq 1.
    \end{equation}
    From \eqref{A4} and \eqref{A5} it infers that, if $\lambda >-1/2$,
    \begin{equation}\label{A6}
      0 \leq W_t^\lambda(x,y) \leq C \left( \frac{(xy)^{-\lambda}}{\sqrt{t}}e^{-(x-y)^2/4t} + \frac{1}{x^{2\lambda + 1}} \right), \quad t,x,y \in (0,\infty).
    \end{equation}
    Then \eqref{A6} implies that the operator $\mathcal{S}_{0,m}^{\alpha_1, \dots, \alpha_m}$ is bounded from
    $L^1((0,\infty)^m,\overset{m}{\underset{j=1}{\prod}} x_j^{2\alpha_j}dx)$ into
    $L^{1,\infty}((0,\infty)^m,\overset{m}{\underset{j=1}{\prod}} x_j^{2\alpha_j}dx)$
    when this property is established for the operator $\mathcal{D}_{l,m}^{\alpha_1, \dots, \alpha_m}$ defined, for every $0 \leq l \leq m$, $l \in \mathbb{N}$, by
    \begin{align*}
      \mathcal{D}_{l,m}^{\alpha_1, \dots, \alpha_m}(g)(x)
          & = \sup_{t>0}  \int_{x_1/2}^{2x_1} \cdots \int_{x_l/2}^{2x_l} \prod_{j=1}^l \frac{(x_jy_j)^{-\alpha_j}}{\sqrt{t}}e^{-(x_j-y_j)^2/4t}\\
                      & \times \left( \frac{1}{\overset{m}{\underset{j=l+1}{\prod}} x_j^{2\alpha_j+1}}\int_{x_{l+1}/2}^{2x_{l+1}} \cdots \int_{x_m/2}^{2x_m}
                        \lvert g(y) \rvert \prod_{j=l+1}^m y_j^{2\alpha_j} dy_j\right) \prod_{j=1}^l y_j^{2\alpha_j} dy_j , \ x \in (0, \infty)^m.
    \end{align*}
    Since the operator $\mathbb{W}_{*,loc}^{\alpha_1, \dots, \alpha_l}$, $0< l \leq m$, is bounded from
    $L^1((0,\infty)^l,\overset{l}{\underset{j=1}{\prod}} x_j^{2\alpha_j}dx)$ into $L^{1,\infty}($ $(0,\infty)^l,\overset{l}{\underset{j=1}{\prod}} x_j^{2\alpha_j}dx)$
    and the operator $H_{loc}^{\alpha_{l+1}, \dots, \alpha_m}$,
    $0<l \leq m$, is bounded from $L^1((0,\infty)^{m-l},$ $\overset{m}{\underset{j=l+1}{\prod}} x_j^{2\alpha_j}dx)$
    into itself, from \cite[Proposition 1]{Din}, it follows that $\mathcal{D}_{l,m}^{\alpha_1, \dots, \alpha_m}$
    is bounded from $L^1((0,\infty)^m,\overset{m}{\underset{j=1}{\prod}} x_j^{2\alpha_j}dx)$
    into $L^{1,\infty}((0,\infty)^m,\overset{m}{\underset{j=1}{\prod}} x_j^{2\alpha_j}dx)$, for every $0 \leq l \leq m$, $l \in \mathbb{N}$.\\

    Assume that $0<l<m$. Since the operator $L_{\alpha_1, \dots, \alpha_k}$, $k \in \mathbb{N}$, es bounded from
    $L^1((0,\infty)^k,$ $\overset{k}{\underset{j=1}{\prod}} x_j^{2\alpha_j}dx)$ into $L^{1,\infty}((0,\infty)^k,\overset{k}{\underset{j=1}{\prod}}  x_j^{2\alpha_j}dx)$
    and the operator $H_{loc}^{\alpha_1, \dots, \alpha_k}$,
    $k \in \mathbb{N}$, is bounded from $L^1((0,\infty)^k,\overset{k}{\underset{j=1}{\prod}} x_j^{2\alpha_j}dx)$ into
    itself, by taking into account \cite[Proposition 1]{Din}, \eqref{A3} and \eqref{A6}, in order to prove that the operator
    $\mathcal{S}_{l,m}^{\alpha_1, \dots, \alpha_m}$ is bounded from $L^1((0,\infty)^m,\overset{m}{\underset{j=1}{\prod}} x_j^{2\alpha_j}dx)$
    into $L^{1,\infty}((0,\infty)^m,\overset{m}{\underset{j=1}{\prod}} x_j^{2\alpha_j}dx)$, it is sufficient to see this property for the operator
    \begin{align*}
      T_{l,m}^{\alpha_1, \dots, \alpha_m}(g)(x)
         = \sup_{t>0} & \int_0^{x_1/2} \cdots \int_{0}^{x_l/2}\int_{x_{l+1}/2}^{2x_{l+1}} \cdots \int_{x_m/2}^{2x_m}
                        \prod_{j=1}^l \frac{1}{t^{\alpha_j +1/2}}e^{-x_j^2/20t}\\
                      & \times \prod_{j=l+1}^m \frac{(x_jy_j)^{-\alpha_j}}{\sqrt{t}}e^{-(x_j-y_j)^2/4t} \lvert g(y) \rvert \prod_{j=1}^m y_j^{2\alpha_j} dy,
                        \ x \in (0, \infty)^m.
    \end{align*}
    We have that
    \begin{align*}
      T_{l,m}^{\alpha_1, \dots, \alpha_m}(g)(x)
         \leq &  \sup_{t>0} \int_0^{x_1/2} \cdots \int_{0}^{x_l/2}\int_{x_{l+1}/2}^{2x_{l+1}} \cdots \int_{x_m/2}^{2x_m}
                        \prod_{j=l+1}^m (x_jy_j)^{-\alpha_j} \\
         &  \times \frac{e^{-(\overset{l}{\underset{j=1}{\sum}}x_j^2 + \overset{m}{\underset{j=l+1}{\sum}}(x_j-y_j)^2)/20t}}
           {t^{\overset{l}{\underset{j=1}{\sum}}(\alpha_j +1/2)+(m-l)/2}}\lvert g(y) \rvert \prod_{j=1}^m y_j^{2\alpha_j}  dy\\
         \leq &  C \int_0^{x_1/2} \cdots \int_{0}^{x_l/2}\int_{x_{l+1}/2}^{2x_{l+1}} \cdots \int_{x_m/2}^{2x_m}
           \prod_{j=l+1}^m (x_jy_j)^{-\alpha_j} \\
         &  \times \frac{1}{\left( \overset{l}{\underset{j=1}{\sum}}x_j^2 + \overset{m}{\underset{j=l+1}{\sum}}(x_j-y_j)^2 \right)
            ^{\overset{l}{\underset{j=1}{\sum}}(\alpha_j +1/2)+(m-l)/2}}\lvert g(y)\rvert \prod_{j=1}^m y_j^{2\alpha_j}dy\\
         \leq &  C \mathcal{H}_{l,m}^{\alpha_1, \dots,\alpha_m}(\lvert g \rvert)(x), \quad x=(x_1, \dots, x_m) \in (0,\infty)^m.
    \end{align*}
    Then, $T_{l,m}^{\alpha_1, \dots, \alpha_m}$ is bounded from $L^1((0,\infty)^m,\overset{m}{\underset{j=1}{\prod}} x_j^{2\alpha_j}dx)$
    into $L^{1,\infty}((0,\infty)^m,\overset{m}{\underset{j=1}{\prod}} x_j^{2\alpha_j}dx)$, because $\mathcal{H}_{l,m}^{\alpha_1, \dots,\alpha_m}$ has this property.\\

    Thus the proof of Theorem~\ref{maximal} is finished. \qed

  \section{Proof of Theorem~\ref{gfunction}}

    We are going to see that the Littlewood-Paley g-function associated with the heat semigroup $\{W_t^{\lambda_1, \dots, \lambda_n}\}_{t>0}$ and given by
    $$g^{\lambda_1, \dots, \lambda_n}(f)(x)
      =\left \{ \int_0^\infty \left\lvert t \frac{\partial}{\partial t} W_t^{\lambda_1, \dots, \lambda_n}(f)(x)  \right \rvert^2 \frac{dt}{t} \right \}^{1/2},
       \quad x \in (0, \infty)^n,$$
    defines a bounded operator from $L^1((0,\infty)^n,\overset{n}{\underset{j=1}{\prod}} x_j^{2\lambda_j}dx)$
    into $L^{1,\infty}((0,\infty)^n,\overset{n}{\underset{j=1}{\prod}} x_j^{2\lambda_j}dx)$, when $\lambda_j > -1/2$, $j=1, \dots,n$.\\

    For each $x \in (0, \infty)^n$, we consider the local operator $g_{loc}^{\lambda_1, \dots, \lambda_n}$ defined by
    \begin{align*}
      g_{loc}^{\lambda_1, \dots, \lambda_n}(f)(x)
         & = \left \{ \int_0^\infty t   \left \lvert  \frac{\partial}{\partial t} \int_{x_1/2}^{2x_1} \cdots \int_{x_n/2}^{2x_n}
            \prod_{j=1}^n (x_jy_j)^{-\lambda_j}\frac{e^{-(x_j-y_j)^2/4t}}{\sqrt{t}} f(y) \prod_{j=1}^n y_j^{2\lambda_j} dy \right\rvert^2 dt \right \}^{1/2}.
    \end{align*}

    Minkowski's inequality leads us to
    \begin{align*}
       \lvert  g^{\lambda_1, \dots, \lambda_n}&(f)(x) - g_{loc}^{\lambda_1, \dots, \lambda_n}(f)(x)  \rvert \\
      \leq &   \left \{ \int_0^\infty t \left \lvert  \frac{\partial}{\partial t} \int_{(0,\infty)^n}
        \left( \prod_{j=1}^n W_t^{\lambda_j}(x_j,y_j) -  \prod_{j=1}^n \chi_{\{x_j/2 < y_j < 2x_j\}}(y_j)
        (x_jy_j)^{-\lambda_j}  \right. \right. \right.
    \end{align*}
    \begin{align*}
      &  \left. \left. \left. \times \frac{e^{-(x_j-y_j)^2/4t}}{2\sqrt{\pi t}}\right) f(y) \prod_{j=1}^n y_j^{2\lambda_j}
        dy \right\rvert^2 dt \right \}^{1/2}, \ x=(x_1, \dots, x_n) \in (0, \infty)^n.
    \end{align*}

    Our proof is divided in two steps:

    \begin{Cl}\label{Claim1}
      The operator $g_{loc}^{\lambda_1, \dots, \lambda_n}$
      is bounded from $L^1((0,\infty)^n,\overset{n}{\underset{j=1}{\prod}} x_j^{2\lambda_j}dx)$
      into $L^{1,\infty}((0,\infty)^n,$ $\overset{n}{\underset{j=1}{\prod}}x_j^{2\lambda_j}dx)$.
    \end{Cl}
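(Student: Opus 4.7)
The strategy mirrors the dyadic decomposition used in Section~2 for the local maximal operator $\mathbb{W}_{*,loc}^{\alpha_1,\ldots,\alpha_n}$, with the complication that the $t$-derivative inside the $g$-function destroys positivity, so one cannot simply enlarge the integration region $D(x)$ as in the maximal operator case. The elementary observation driving everything is that on $\{(x,y) : x \in (0,\infty)^n,\ y \in D(x)\}$ the extra weight factor satisfies
$$\prod_{j=1}^n (x_jy_j)^{-\lambda_j} y_j^{2\lambda_j} = \prod_{j=1}^n x_j^{-\lambda_j} y_j^{\lambda_j},$$
which, because $y_j \in [x_j/2, 2x_j]$, is bounded above and below by positive constants depending only on $\lambda_1,\ldots,\lambda_n$.

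I decompose $(0,\infty)^n = \bigcup_{j \in \mathbb{Z}^n} Q_j$ with $Q_j = \overset{n}{\underset{i=1}{\prod}}[2^{j_i}, 2^{j_i+1}]$ and $D_j = \overset{n}{\underset{i=1}{\prod}}[2^{j_i-1}, 2^{j_i+2}]$, so that $D(x) \subset D_j$ whenever $x \in Q_j$. Since $\prod_{j=1}^n x_j^{-\lambda_j}$ is independent of $t$ and $y$, I pull it outside the $t$-derivative and the $y$-integral, reducing matters to a $g$-type operator with no extra weight in the kernel, acting on $\widetilde f(y) := f(y)\prod_{j=1}^n y_j^{\lambda_j}$. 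For $x \in Q_j$ I then split $\int_{D(x)} = \int_{D_j} - \int_{D_j \setminus D(x)}$ and apply the triangle inequality in $L^2((0,\infty), dt)$ to dominate $g_{loc}^{\lambda_1,\ldots,\lambda_n}(f)(x)$ by
$$C\prod_{j=1}^n x_j^{-\lambda_j} \bigl[\, g^{\mathrm{E}}(\chi_{D_j}\widetilde f)(x) + \mathcal{B}_j(\widetilde f)(x)\, \bigr],$$
where $g^{\mathrm{E}}$ denotes the classical Euclidean $g$-function on $\mathbb{R}^n$ and $\mathcal{B}_j$ carries the boundary contribution from $D_j \setminus D(x)$.

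For the main term I invoke the standard weak-$(1,1)$ boundedness of $g^{\mathrm{E}}$ with respect to Lebesgue measure on $\mathbb{R}^n$. On $Q_j$ and $D_j$ the weights $x_i^{2\lambda_i}$ and $y_i^{2\lambda_i}$ are both comparable to $2^{2j_i\lambda_i}$, so the conversion between $m_{\lambda_1,\ldots,\lambda_n}$ and Lebesgue measure introduces only controlled dyadic factors that, together with $\prod_{j=1}^n x_j^{-\lambda_j}$ and the size of $\widetilde f$, balance precisely. Summing over $j \in \mathbb{Z}^n$, using that each $y$ lies in at most $C_n$ of the sets $\{D_j\}$, then yields the required $C\gamma^{-1}\|f\|_{L^1(m_{\lambda_1,\ldots,\lambda_n})}$ estimate on this principal contribution.

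The hard part will be the boundary error $\mathcal{B}_j(\widetilde f)(x)$. My plan is to control it via Minkowski's inequality in $L^2(t\,dt)$ combined with the pointwise kernel bound
$$\Bigl\{\int_0^\infty t\,|\partial_t K_t(x,y)|^2\, dt\Bigr\}^{1/2} \leq \frac{C}{|x-y|^n},$$
where $K_t(x,y) = \prod_{j=1}^n e^{-(x_j-y_j)^2/4t}/\sqrt{t}$, and then to observe that for $x \in Q_j$ and $y \in D_j \setminus D(x)$ at least one coordinate satisfies $|x_i - y_i| \geq x_i/2 \geq 2^{j_i-1}$. This dyadic lower bound on $|x-y|$ should allow $\mathcal{B}_j$ to be absorbed by operators of the same flavor as the local Hardy-type operator $H_{loc}^{\alpha_1,\ldots,\alpha_k}$ already shown to be of weak type $(1,1)$ in Section~2.
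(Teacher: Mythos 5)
Your architecture reproduces the paper's proof of Claim~\ref{Claim1} almost step for step: the dyadic cubes $Q_j$ and enlargements $D_j$, the pulled-out factor $\prod_{j=1}^n x_j^{-\lambda_j}$ with $\widetilde f = f\prod_j y_j^{\lambda_j}$ (the paper's $f_m$), the comparison of $g_{loc}^{\lambda_1,\dots,\lambda_n}$ with the fully dyadic-localized operators (the paper's $Y_m^{\lambda_1,\dots,\lambda_n}$), the weak $(1,1)$ bound for the classical Euclidean g-function plus weight comparability on $Q_j$ and $D_j$ for the main term, and for the boundary term Minkowski's inequality together with the kernel estimate $\{\int_0^\infty t\,|\partial_t K_t(x,y)|^2\,dt\}^{1/2}\le C|x-y|^{-n}$ and the observation that some coordinate is separated, $|x_i-y_i|\ge x_i/2\ge 2^{j_i-1}$. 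All of this matches \eqref{B1}--\eqref{B4} in the paper and is sound.

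The gap is in your final sentence. After using the coordinate separation, the boundary kernel is dominated by $\big(2^{2j_i-2}+\sum_{l\ne i}(x_l-y_l)^2\big)^{-n/2}$ on $D_j$, and this is \emph{not} absorbable by an operator of $H_{loc}$ flavor, i.e.\ by a normalized average over the dyadic box. If you discard the decay and use the crude bound $2^{-nj_i}$, the weak-type estimate per cube closes only when $\sum_{l\ne i} j_l \le (n-1)j_i + O(1)$; for strongly anisotropic cubes, where some scales $2^{j_l}$, $l\ne i$, greatly exceed $2^{j_i}$, the constant kernel bound is much larger than the Hardy-type normalization $\big(2^{j_i}\prod_{l\ne i}2^{j_l}\big)^{-1}$ and the sum over $j\in\mathbb{Z}^n$ diverges. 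One must keep the decay in the remaining $n-1$ variables: integrate out the separated coordinate to form $\tilde f_m(\bar y)=\chi_{\prod_{l\ne i}[2^{m_l-1},2^{m_l+2})}(\bar y)\int |f(y_i,\bar y)|\,dy_i$, decompose into dyadic annuli in $|\bar x-\bar y|$, and dominate the boundary term by $C\,2^{-m_i}M_{n-1}(\tilde f_m)(\bar x)$, where $M_{n-1}$ is the Hardy--Littlewood maximal operator on $\mathbb{R}^{n-1}$; its weak $(1,1)$ bound (not an $H_{loc}$-type $L^1$ bound) yields $\frac{C}{\beta}\int_{\prod_l[2^{m_l-1},2^{m_l+2})}|f|\,dm_{\lambda_1,\dots,\lambda_n}$ per cube, which is what makes the sum over $m\in\mathbb{Z}^n$ finite. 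This is exactly how the paper closes the argument; as a minor additional point, $H_{loc}^{\alpha_1,\dots,\alpha_k}$ was established in Section~2 as bounded on $L^1$, not merely of weak type, but in any case it is the wrong tool at this step.
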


    \begin{Cl}\label{Claim2}
      The operator $G^{\lambda_1, \dots, \lambda_n}$ defined by
      \begin{align*}
        G^{\lambda_1, \dots, \lambda_n}(f)(x)
          = &  \left \{ \int_0^\infty t \left \lvert  \frac{\partial}{\partial t} \int_{(0,\infty)^n}
              \left( \prod_{j=1}^n W_t^{\lambda_j}(x_j,y_j) -  \prod_{j=1}^n \chi_{\{x_j/2 < y_j < 2x_j\}}(y_j) (x_jy_j)^{-\lambda_j} \right. \right. \right.\\
          & \left. \left. \left. \times  \frac{e^{-(x_j-y_j)^2/4t}}{2 \sqrt{t \pi}}\right) f(y)
            \prod_{j=1}^n y_j^{2\lambda_j} dy \right\rvert^2 dt \right \}^{1/2},  \ x=(x_1, \dots, x_n) \in (0, \infty)^n.
      \end{align*}
      is bounded from $L^1((0,\infty)^n,\overset{n}{\underset{j=1}{\prod}} x_j^{2\lambda_j}dx)$
      into $L^{1,\infty}((0,\infty)^n,\overset{n}{\underset{j=1}{\prod}}x_j^{2\lambda_j}dx)$.
    \end{Cl}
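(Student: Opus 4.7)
My plan is to first apply Minkowski's integral inequality to move the $L^2(t\,dt)$-norm inside the spatial integral, reducing the problem to proving that the linear integral operator with kernel
\begin{equation*}
K^*(x,y) = \left\{\int_0^\infty t \left|\frac{\partial}{\partial t}\Big(\prod_{j=1}^n W_t^{\lambda_j}(x_j,y_j) - \prod_{j=1}^n \chi_{\{x_j/2<y_j<2x_j\}}(y_j)(x_jy_j)^{-\lambda_j}\frac{e^{-(x_j-y_j)^2/4t}}{2\sqrt{\pi t}}\Big)\right|^2 dt\right\}^{1/2}
\end{equation*}
is of weak type $(1,1)$ with respect to $m_{\lambda_1,\ldots,\lambda_n}$. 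Following the blueprint from Section~2, I then split $(0,\infty)^n$ into $3^n$ pieces according to whether each $y_j$ lies in $(0,x_j/2)$, $(x_j/2,2x_j)$, or $(2x_j,\infty)$, and treat each piece separately.

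On the fully local piece $D(x)=\prod_j(x_j/2,2x_j)$, I would write $A_j=W_t^{\lambda_j}(x_j,y_j)$ and $B_j=(x_jy_j)^{-\lambda_j}\mathbb{W}_t(x_j,y_j)$, and expand the difference as the telescoping sum
\begin{equation*}
\prod_{j=1}^n A_j - \prod_{j=1}^n B_j = \sum_{k=1}^n \Big(\prod_{j<k} A_j\Big)(A_k - B_k)\Big(\prod_{j>k} B_j\Big).
\end{equation*}
Using the two-term version of the asymptotic expansion \eqref{E2} for $I_{\lambda_k-1/2}(x_ky_k/2t)$ inside the explicit formula for $W_t^{\lambda_k}$ gives, for $x_ky_k/t\geq 1$, the quantitative cancellation $|A_k - B_k|\leq C(t/x_ky_k)(x_ky_k)^{-\lambda_k}\mathbb{W}_t(x_k,y_k)$ and an analogous bound after $\partial_t$. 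In the complementary regime $x_ky_k/t\leq 1$ (which in $D(x)$ forces $t\gtrsim x_k^2$), estimates \eqref{E1} and \eqref{A5} show that both $A_k$ and $B_k$ are already exponentially small. Combining these, carrying out the $L^2(t\,dt)$ integral, and using that $x_j^{2\lambda_j}\sim y_j^{2\lambda_j}$ throughout $D(x)$, I expect a local bound of the form $K^*(x,y)\,\mathbf{1}_{D(x)}(y)\leq C\prod_j(x_jy_j)^{-\lambda_j}|x-y|^{-n+1}$, that is, a standard Calderón--Zygmund kernel; the weak type $(1,1)$ for this piece will then follow by an argument analogous to the one used for $\mathbb{W}_{*,loc}^{\alpha_1,\ldots,\alpha_n}$ in Section~2.

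On the non-local pieces, at least one coordinate $y_k$ lies outside $(x_k/2,2x_k)$, so $\prod_j B_j=0$ and the difference reduces to $\prod_j W_t^{\lambda_j}(x_j,y_j)$. I would then apply \eqref{A3} when $y_j<x_j/2$ and \eqref{A0} when $y_j>2x_j$, along with the corresponding $t$-derivative estimates obtained from differentiating \eqref{E2} via the identity \eqref{E3}, to compute the $L^2(t\,dt)$ integral pointwise. Each of the resulting $3^n-1$ sub-operators is then dominated by a composition of the auxiliary operators $L_{\alpha_1,\ldots,\alpha_k}$, $H_\infty^k$, $H_{loc}^{\alpha_1,\ldots,\alpha_k}$, and $\mathcal{H}_{l,k}^{\alpha_1,\ldots,\alpha_k}$ constructed in Section~2, each of which is already known to be of weak type $(1,1)$ with respect to the corresponding Bessel measure; a final appeal to \cite[Proposition 1]{Din} chains them together.

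The main obstacle will be the local piece. Unlike Theorem~\ref{maximal}, the operator $G^{\lambda_1,\ldots,\lambda_n}$ is not positive, so one cannot dominate $|A_k|+|B_k|$ separately; the cancellation supplied by the asymptotic expansion \eqref{E2} must be genuinely exploited. Concretely, the factor $t/(x_ky_k)$ appearing in $A_k-B_k$ must survive both the $t$-differentiation and the $L^2(t\,dt)$ integration with a uniform constant, which forces me to carry \eqref{E2} to at least one order beyond the leading term and to control carefully the $t$-derivative of the remainder $O(1/z^{m+1})$; this is the delicate step on which the whole argument hinges.
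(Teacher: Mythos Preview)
Your overall strategy on the non-local pieces is essentially the paper's: once some coordinate $y_k$ falls outside $(x_k/2,2x_k)$ the characteristic-function product vanishes, and the remaining Bessel heat kernel factors are controlled by \eqref{A0}, \eqref{A3}, \eqref{A6} together with the $\partial_t$-analogues \eqref{B8}--\eqref{B10}; the resulting bounds feed into the auxiliary operators $L_{\alpha_1,\dots,\alpha_k}$, $H_\infty^k$, $H_{loc}^{\alpha_1,\dots,\alpha_k}$, $\mathcal{H}_{l,k}^{\alpha_1,\dots,\alpha_k}$ from Section~2, chained by \cite[Proposition~1]{Din}. That part is fine.

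The gap is in your treatment of the fully local piece $D(x)$. The pointwise bound
\[
K^*(x,y)\,\mathbf{1}_{D(x)}(y)\le C\prod_{j=1}^n(x_jy_j)^{-\lambda_j}\,|x-y|^{-n+1}
\]
does \emph{not} hold uniformly when the coordinates $x_j$ have widely different scales. Take $n=2$, fix $x_2\sim 1$ and let $x_1=\epsilon\to0^+$. In the telescoping term $\bigl[\partial_t(A_1-B_1)\bigr]A_2$, estimate \eqref{B11} gives an extra factor $(x_1y_1)^{-1}\sim\epsilon^{-2}$; carrying out the $L^2(t\,dt)$ integral over $t\lesssim x_1y_1\sim\epsilon^2$ yields a contribution of order $\prod_j(x_jy_j)^{-\lambda_j}\cdot\epsilon^{-2}\bigl[\log(\epsilon^2/|x-y|^2)\bigr]^{1/2}$ near the diagonal, whereas your claimed bound is $\prod_j(x_jy_j)^{-\lambda_j}|x-y|^{-1}$. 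For $|x-y|\sim\epsilon$ these differ by a factor $\epsilon^{-1}$, and there is no cancellation with the other telescoping terms to rescue this. The essential point is that the gain from the asymptotic expansion \eqref{E2} is $t/(x_ky_k)$ in the \emph{$k$-th variable only}, which is not comparable to $|x-y|$ when the box $D(x)$ is highly anisotropic.

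The paper avoids this by \emph{not} pushing Minkowski all the way inside. After the product-rule/telescoping decomposition $G_1\le\sum_i G_{1,i}$ in \eqref{B6}, the local term $G_{1,1;3}$ is handled (see the estimate of $\mathcal{S}_k^{\beta_1,\dots,\beta_k}$) by applying the $L^2(t\,dt)$ bound \eqref{B13} only to the \emph{single} factor carrying the difference $W_t^{\lambda_1}-\,(x_1y_1)^{-\lambda_1}\mathbb{W}_t$, and treating the remaining factors via $\sup_{t>0}$ together with Theorem~\ref{maximal}. That decoupling is what makes the anisotropy harmless: the one-variable estimate \eqref{B13} gives $C/x_1^{2\lambda_1+1}$ independently of the other coordinates, and the leftover supremum is exactly the maximal operator already known to be weak $(1,1)$. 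Your plan would work if you adopt this mixed $L^2_t$/$\sup_t$ splitting on the local piece instead of attempting a single pointwise kernel bound.
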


    We prove firstly Claim~\ref{Claim1}. It is sufficient to show that the operator
    \begin{align*}
      g_{1,loc}^{\lambda_1, \dots, \lambda_n}(f)(x)
        = &  \left\{ \int_0^\infty  t \left \lvert \int_{x_1/2}^{2x_1} \cdots \int_{x_n/2}^{2x_n}
            (x_1y_1)^{-\lambda_1} \frac{\partial}{\partial t}\left(\frac{e^{-(x_1-y_1)^2/4t}}{\sqrt{t}}\right) \prod_{j=2}^n (x_jy_j)^{-\lambda_j} \right. \right.\\
        & \left. \left. \times \frac{e^{-(x_j-y_j)^2/4t}}{\sqrt{t}} f(y)
          \prod_{j=1}^n y_j^{2\lambda_j} dy \right\rvert^2 dt \right \}^{1/2}, \ x=(x_1, \dots, x_n) \in (0,\infty)^n,
    \end{align*}
    is bounded from $L^1((0,\infty)^n,\overset{n}{\underset{j=1}{\prod}} x_j^{2\lambda_j}dx)$
    into $L^{1,\infty}((0,\infty)^n,\overset{n}{\underset{j=1}{\prod}}x_j^{2\lambda_j}dx)$.
    When the derivative acts on any other factor the operator that
    appears can be analyzed in a similar way. \\

    For every $m=(m_1, \dots, m_n) \in \mathbb{Z}^n$ the operator $Y_m^{\lambda_1, \dots,\lambda_n}$ is defined by
    \begin{align*}
       Y_m^{\lambda_1, \dots,\lambda_n}(f)(x)
        = & \chi_{\overset{n}{\underset{j=1}{\prod}}[2^{m_j},2^{m_j+1})}(x)
        \left\{ \int_0^\infty  t \left \lvert \int_{(0,\infty)^n}\chi_{\overset{n}{\underset{j=1}{\prod}}[2^{m_j-1},2^{m_j+2})}(y)
        (x_1y_1)^{-\lambda_1}  \right. \right.\\
      & \left. \left.  \times \frac{\partial}{\partial t}\left(\frac{e^{-(x_1-y_1)^2/4t}}{\sqrt{t}}\right)
        \prod_{j=2}^n (x_jy_j)^{-\lambda_j} \frac{e^{-(x_j-y_j)^2/4t}}{\sqrt{t}} f(y) \prod_{j=1}^n y_j^{2\lambda_j} dy \right\rvert^2 dt \right \}^{1/2}\\
       = &  \chi_{\overset{n}{\underset{j=1}{\prod}}[2^{m_j},2^{m_j+1})}(x) \prod_{j=1}^n x_j^{-\lambda_j}
        \left\{ \int_0^\infty  t \left \lvert \int_{(0,\infty)^n} \frac{\partial}{\partial t}\left(\frac{e^{-(x_1-y_1)^2/4t}}{\sqrt{t}}\right) \right. \right.\\
      &  \left. \left.  \times \prod_{j=2}^n \frac{e^{-(x_j-y_j)^2/4t}}{\sqrt{t}}  f_m(y) dy \right\rvert^2 dt \right \}^{1/2},
      \ x=(x_1, \dots, x_n) \in (0,\infty)^n,
    \end{align*}
    where
    $\displaystyle f_m(y)=\chi_{\overset{n}{\underset{j=1}{\prod}}[2^{m_j-1},2^{m_j+2})}(y) f(y) \prod_{j=1}^n y_j^{\lambda_j}, \quad y=(y_1, \dots, y_n) \in (0,\infty)^n.$

    Let $\gamma>0$. We can write, for each $f \in L^1((0,\infty)^n,\overset{n}{\underset{j=1}{\prod}}x_j^{2\lambda_j}dx)$,
    \begin{align}\label{B1}
      m_{\lambda_1, \dots, \lambda_n}&(\{ x \in (0, \infty)^n : g_{1,loc}^{\lambda_1, \dots, \lambda_n}(f)(x) > \gamma\})\nonumber\\
       \leq &  m_{\lambda_1, \dots, \lambda_n}\left(\{ x \in (0, \infty)^n :
         \lvert g_{1,loc}^{\lambda_1, \dots, \lambda_n}(f)(x) - \sum_{m \in \mathbb{Z}^n} Y_m^{\lambda_1, \dots,\lambda_n}(f)(x) \rvert \nonumber \right. \\
         & + \left. \sum_{m \in \mathbb{Z}^n} Y_m^{\lambda_1, \dots,\lambda_n}(f)(x)  > \gamma\}\right)\nonumber\\
       = &  m_{\lambda_1, \dots, \lambda_n}(\{ x \in (0, \infty)^n :
         \lvert g_{1,loc}^{\lambda_1, \dots, \lambda_n}(f)(x) - \sum_{m \in \mathbb{Z}^n} Y_m^{\lambda_1, \dots,\lambda_n}(f)(x) \rvert  > \gamma/2\})\nonumber\\
       &  + m_{\lambda_1, \dots, \lambda_n}(\{ x \in (0, \infty)^n :
         \sum_{m \in \mathbb{Z}^n} Y_m^{\lambda_1, \dots,\lambda_n}(f)(x)  > \gamma/2\})\nonumber\\
      = &  m_{\lambda_1, \dots, \lambda_n}(\{ x \in (0, \infty)^n :
         \lvert g_{1,loc}^{\lambda_1, \dots, \lambda_n}(f)(x) - \sum_{m \in \mathbb{Z}^n} Y_m^{\lambda_1, \dots,\lambda_n}(f)(x) \rvert  > \gamma/2\})\\
       &  + \sum_{m \in \mathbb{Z}^n} m_{\lambda_1, \dots, \lambda_n}(\{ x \in \prod_{j=1}^n [2^{m_j},2^{m_j+1}) :
         Y_m^{\lambda_1, \dots,\lambda_n}(f)(x)  > \gamma/2\}). \nonumber
    \end{align}
    For every $m \in \mathbb{Z}^n$, it has
    \begin{align*}
      m_{\lambda_1, \dots, \lambda_n}&(\{ x \in \prod_{j=1}^n [2^{m_j},2^{m_j+1}) :  Y_m^{\lambda_1, \dots,\lambda_n}(f)(x)  > \gamma/2\})\\
       \leq &  C 2^{2\overset{n}{\underset{j=1}{\sum}}m_j \lambda_j}
          m_0^{(n)}(\{  x \in \prod_{j=1}^n [2^{m_j},2^{m_j+1}) : g_1(f_m)(x) > M \gamma 2^{\overset{n}{\underset{j=1}{\sum}}m_j \lambda_j} \}),
    \end{align*}
    where $C,M >0$ are suitable constants that do not depend on $m \in \mathbb{Z}^n$, and $g_1$ represents the classical Littlewood-Paley g-function defined by
    $$g_1(F)(x)
      = \left \{ \int_0^\infty t \left\lvert \int_{\mathbb{R}^n} \frac{\partial}{\partial t} \left( \frac{e^{-(x_1-y_1)^2/4t}}{2\sqrt{\pi t}} \right)
        \prod_{j=2}^n \frac{e^{-(x_j-y_j)^2/4t}}{2\sqrt{\pi t}} F(y) dy \right  \rvert^2 dt \right \}^{1/2},
        \ x \in \mathbb{R}^n.$$
    Since, as it is well known, $g_1$ is a bounded operator from $L^1(\mathbb{R}^n,dx)$ into $L^{1,\infty}(\mathbb{R}^n,dx)$, we deduce that,
    \begin{align*}
      m_{\lambda_1, \dots, \lambda_n}&(\{ x \in \prod_{j=1}^n [2^{m_j},2^{m_j+1}) :  Y_m^{\lambda_1, \dots,\lambda_n}(f)(x)  > \gamma/2\})\\
        \leq & \frac{C}{\gamma} 2^{\overset{n}{\underset{j=1}{\sum}}m_j\lambda_j} \lVert f_m \rVert_{L^1(\mathbb{R}^n,dx)}\\
        = & \frac{C}{\gamma} 2^{\overset{n}{\underset{j=1}{\sum}}m_j\lambda_j} \int_{\overset{n}{\underset{j=1}{\prod}} [2^{m_j-1},2^{m_j+2})}
           \lvert f(y) \rvert \prod_{j=1}^n y_j^{\lambda_j} dy\\
        \leq & \frac{C}{\gamma} \int_{\overset{n}{\underset{j=1}{\prod}} [2^{m_j-1},2^{m_j+2})}
           \lvert f(y) \rvert \prod_{j=1}^n y_j^{2\lambda_j} dy.
    \end{align*}
    Hence, we obtain
    \begin{align}\label{B2}
      \sum_{m \in \mathbb{Z}^n} & m_{\lambda_1, \dots, \lambda_n}(\{ x \in \prod_{j=1}^n [2^{m_j},2^{m_j+1})
                                   :  Y_m^{\lambda_1, \dots,\lambda_n}(f)(x)  > \frac{\gamma}{2}\})
                                \leq \frac{C}{\gamma} \lVert f \rVert_{L^1((0,\infty)^n,\overset{n}{\underset{j=1}{\prod}}x_j^{2\lambda_j}dx)}.
    \end{align}
    On the other hand, it has that
    \begin{align}\label{B3}
      m&_{\lambda_1, \dots, \lambda_n}(\{ x \in (0, \infty)^n :
         \lvert g_{1,loc}^{\lambda_1, \dots, \lambda_n}(f)(x) - \sum_{m \in \mathbb{Z}^n} Y_m^{\lambda_1, \dots,\lambda_n}(f)(x) \rvert  > \gamma/2\})\\
        & =  \sum_{m \in \mathbb{Z}^n}  m_{\lambda_1, \dots, \lambda_n}(\{ x \in \prod_{j=1}^n [2^{m_j},2^{m_j+1}) :
        \lvert g_{1,loc}^{\lambda_1, \dots, \lambda_n}(f)(x) - Y_m^{\lambda_1, \dots,\lambda_n}(f)(x) \rvert  > \gamma/2\}). \nonumber
    \end{align}
    Fix $m=(m_1, \dots, m_n) \in \mathbb{Z}^n$. We consider, for every $ x \in \overset{n}{\underset{j=1}{\prod}} [2^{m_j},2^{m_j+1})$, the set
    $$A(x)= \prod_{j=1}^n [2^{m_j-1},2^{m_j+2}) \backslash \prod_{j=1}^n [x_j/2,2x_j).$$
    It is clear that
    $$A(x)=\bigcup_{l=1}^n \left( B_l(x) \cup C_l(x) \right), \quad x \in \prod_{j=1}^n [2^{m_j},2^{m_j+1}),$$
    where, for every $x \in \overset{n}{\underset{j=1}{\prod}} [2^{m_j},2^{m_j+1})$ and $l=1, \dots, n$,
    \begin{align*}
      B_l(x)& = \{ y \in (0,\infty)^n : 2^{m_j-1} \leq y_j \leq 2^{m_j+2}, \ j=1, \dots, n, \ j\neq l; \ 2^{m_l-1} \leq y_l < x_l/2\}
    \end{align*}
    and
    \begin{align*}
      C_l(x)& = \{ y \in (0,\infty)^n : 2^{m_j-1} \leq y_j \leq 2^{m_j+2}, \ j=1, \dots, n, \ j\neq l; \ 2x_l \leq y_l <2^{m_l+2}\}.
    \end{align*}

    We get by using Minkowski's inequality, for every $ x \in \overset{n}{\underset{j=1}{\prod}}[2^{m_j},2^{m_j+1})$,
    \begin{align*}
       \lvert g&_{1,loc}^{\lambda_1, \dots, \lambda_n}(f)(x) - Y_m^{\lambda_1, \dots, \lambda_n}(f)(x) \rvert \\
      \leq &   \prod_{j=1}^n x_j^{-\lambda_j} \left\{ \int_0^\infty t \left \lvert \sum_{l=1}^n \left( \int_{B_l(x)} + \int_{C_l(x)} \right)
          \frac{\partial}{\partial t} \left( \frac{e^{-\frac{(x_1-y_1)^2}{4t}}}{\sqrt{t}} \right) \prod_{j=2}^n \frac{e^{-\frac{(x_j-y_j)^2}{4t}}}{\sqrt{t}} f(y) \prod_{j=1}^n y_j^{\lambda_j}  dy \right \rvert^2 dt \right\}^\frac{1}{2}\\
      \leq &   \sum_{l=1}^n \left(
        \prod_{j=1}^n x_j^{-\lambda_j} \left\{ \int_0^\infty t \left \lvert \int_{B_l(x)}
          \frac{\partial}{\partial t} \left( \frac{e^{-\frac{(x_1-y_1)^2}{4t}}}{\sqrt{t}} \right) \prod_{j=2}^n \frac{e^{-\frac{(x_j-y_j)^2}{4t}}}{\sqrt{t}} f(y) \prod_{j=1}^n y_j^{\lambda_j}  dy \right \rvert^2 dt \right\}^\frac{1}{2} \right. \\
      &  \left. +  \prod_{j=1}^n x_j^{-\lambda_j} \left\{ \int_0^\infty t \left \lvert \int_{C_l(x)}
          \frac{\partial}{\partial t} \left( \frac{e^{-\frac{(x_1-y_1)^2}{4t}}}{\sqrt{t}} \right) \prod_{j=2}^n \frac{e^{-\frac{(x_j-y_j)^2}{4t}}}{\sqrt{t}} f(y) \prod_{j=1}^n y_j^{\lambda_j}  dy \right \rvert^2 dt \right\}^\frac{1}{2} \right)\\
      \leq &   \sum_{l=1}^n \left( \prod_{j=1}^n x_j^{-\lambda_j} \int_{B_l(x)}|f(y)| \left\{ \int_0^\infty t
        \left| \frac{ \partial}{ \partial t}\left(\frac{e^{-\frac{(x_1-y_1)^2}{4t}}}{\sqrt{t}}\right) \prod_{j=2}^n \frac{e^{-\frac{(x_j-y_j)^2}{4t}}}{\sqrt{t}}\right|^2 dt \right\}^\frac{1}{2} \prod_{j=1}^n y_j^{\lambda_j} dy \right. \\
      &  \left. + \prod_{j=1}^n x_j^{-\lambda_j} \int_{C_l(x)}|f(y)| \left\{ \int_0^\infty t
        \left| \frac{ \partial}{ \partial t}\left(\frac{e^{-\frac{(x_1-y_1)^2}{4t}}}{\sqrt{t}}\right) \prod_{j=2}^n \frac{e^{-\frac{(x_j-y_j)^2}{4t}}}{\sqrt{t}}\right|^2 dt \right\}^\frac{1}{2} \prod_{j=1}^n y_j^{\lambda_j} dy \right).
    \end{align*}
    Since
    \begin{equation}\label{B3,5}
        \left|\frac{\partial}{\partial t}\left( \frac{e^{-(x_1-y_1)^2/4t}}{\sqrt{t}}\right)\right| \leq C \frac{e^{-(x_1-y_1)^2/8t}}{t^{3/2}}, \quad t,x_1,y_1 \in (0,\infty),
    \end{equation}
    it follows that, for every $x\in \overset{n}{\underset{j=1}{\prod}}\left[ 2^{m_j}, 2^{m_j+1}\right),$
    \begin{align}\label{B4}
      \lvert g_{1,loc}^{\lambda_1, \dots, \lambda_n}&(f)(x) - Y_m^{\lambda_1, \dots, \lambda_n}(f)(x) \rvert \\
     \leq &  C \sum_{l=1}^n \left(\prod_{j=1}^n x_j^{-\lambda_j} \int_{B_l(x)}|f(y)| \left\{ \int_0^\infty
       \frac{e^{-\overset{n}{\underset{j=1}{\sum}}(x_j-y_j)^2/8t}}{t^{n+1}}  dt \right\}^{1/2}\prod_{j=1}^n y_j^{\lambda_j} dy \right.\nonumber \\
     &  + \left. \prod_{j=1}^n x_j^{-\lambda_j} \int_{C_l(x)}|f(y)| \left\{ \int_0^\infty
       \frac{e^{-\overset{n}{\underset{j=1}{\sum}}(x_j-y_j)^2/8t}}{t^{n+1}}  dt \right\}^{1/2}\prod_{j=1}^n y_j^{\lambda_j} dy \right). \nonumber
    \end{align}
   By taking into account symmetries we only study  the first term, $l=1$, in the last sum. Moreover the analysis of the integrals on $B_1(x)$ and $C_1(x)$ can be made
   similarly. We estimates only the integral on $B_1(x).$ Let $x\in \overset{n}{\underset{j=1}{\prod}}\left[ 2^{m_j}, 2^{m_j+1}\right).$ Since
   $|x_1-y_1|= x_1-y_1 > x_1/2 > 2^{m_1-1},$ when $2^{m_1-1}\leq y_1 < x_1/2, $ we have,
   \begin{align*}
    \prod_{j=1}^n x_j^{-\lambda_j} \int_{B_1(x)}&|f(y)| \left\{ \int_0^\infty
       \frac{e^{-\overset{n}{\underset{j=1}{\sum}}(x_j-y_j)^2/4t}}{t^{n+1}}  dt \right\}^{1/2}\prod_{j=1}^n y_j^{\lambda_j} dy\\
    \leq &  C \prod_{j=1}^n x_j^{-\lambda_j} \int_{B_1(x)} \frac{|f(y)|}{(2^{2m_1-2} + \overset{n}{\underset{j=2}{\sum}} (x_j-y_j)^2 )^{n/2}} \overset{n}{\underset{j=1}{\prod}} y_j^{\lambda_j} dy\\
    \leq&  C  \int_{\overset{n}{\underset{j=1}{\prod}}\left[ 2^{m_j-1}, 2^{m_j+2}\right)} \frac{|f(y)|}{(2^{2m_1-2}+
      \overset{n}{\underset{j=2}{\sum}} (x_j-y_j)^2 )^{n/2}}dy\\
    = &  C \int_{\mathbb{R}^{n-1}}\frac{\tilde{f}_m(\bar{y})}{(2^{2m_1-2}+ \overset{n}{\underset{j=2}{\sum}} (x_j-y_j)^2 )^{n/2}}d\bar{y},
   \end{align*}
   where $\bar{y}=(y_2,\ldots,y_n) \in \mathbb{R}^{n-1}$ and
   $\displaystyle \tilde{f}_m(\bar{y})= \chi_{\overset{n}{\underset{j=2}{\prod}}\left[ 2^{m_j-1}, 2^{m_j+2}\right)}(\bar{y}) \int_{2^{m_1-1}}^{2^{m_1+2}} |f(y_1,\bar{y})|dy_1.$  Then, by using standard argument it gets
   \begin{align*}
    \prod_{j=1}^n& x_j^{-\lambda_j} \int_{B_1(x)}|f(y)|\left\{ \int_0^\infty
      \frac{e^{-\overset{n}{\underset{j=1}{\sum}}(x_j-y_j)^2/4t}}{t^{n+1}}  dt \right\}^{1/2}\prod_{j=1}^n y_j^{\lambda_j} dy\\
    \leq &  C \left( \sum_{k=0}^\infty \int_{2^{k+m_1}\leq |\bar{x}- \bar{y}|
      <2^{k+1+m_1}}\frac{\tilde{f}_m(\bar{y})}{(2^{2m_1-2}+|\bar{x}-\bar{y}|^2)^{\frac{n}{2}}}d\bar{y}
    + \int_{|\bar{x}-\bar{y}|<2^{m_1}}\frac{\tilde{f}_m(\bar{y})}{(2^{2m_1-2}+|\bar{x}-\bar{y}|^2)^{\frac{n}{2}}}d\bar{y}\right)\\
    \leq &  C \sum_{k=0}^\infty \frac{1}{2^{(m_1+k)n}} \int_{|\bar{x}-\bar{y}|<2^{k+m_1}} \tilde{f}_m(\bar{y}) d\bar{y}\\
   \leq &  \frac{C}{2^{m_1}}M_{n-1}(\tilde{f}_m)(\bar{x}),
   \end{align*}
   where $\bar{x}=(x_2, \ldots, x_n)$ and $M_{n-1}$ denotes the Hardy-Littlewood maximal function on $\mathbb{R}^{n-1}.$\\

   For every $\beta>0,$ the maximal function theorem leads to
   \begin{align*}
    m_{\lambda_1, \ldots, \lambda_n}&(\{x\in \overset{n}{\underset{j=1}{\prod}}\left[ 2^{m_j}, 2^{m_j+1}\right):
      \overset{n}{\underset{j=1}{\prod}} x_j^{-\lambda_j}\int_{B_1(x)}|f(y)|\left\{ \int_0^\infty
      \frac{e^{-\overset{n}{\underset{j=1}{\sum}}\frac{(x_j-y_j)^2}{4t}}}{t^{n+1}}  dt \right\}^\frac{1}{2}\prod_{j=1}^n y_j^{\lambda_j} dy>\beta\})\\
  \leq &  m_{\lambda_1, \ldots, \lambda_n}(\{x\in \overset{n}{\underset{j=1}{\prod}}\left[ 2^{m_j}, 2^{m_j+1}\right):
      M_{n-1}(\tilde{f}_m)(\bar{x})>\beta2^{m_1}M\})\\
   \leq & 2^{2\overset{n}{\underset{j=1}{\sum}}\lambda_j m_j } m_0^{(n)} (\{ x\in \overset{n}{\underset{j=1}{\prod}}\left[ 2^{m_j}, 2^{m_j+1}\right):
      M_{n-1}(\tilde{f}_m)(\bar{x})>\beta2^{m_1}M\})\\
   \leq & 2^{2\overset{n}{\underset{j=1}{\sum}}\lambda_j m_j } 2^{m_1}m_0^{(n-1)} (\{ \bar{x}\in \mathbb{R}^{n-1}:
      M_{n-1}(\tilde{f}_m)(\bar{x})>\beta2^{m_1}M\})\\
   \leq & 2^{2\overset{n}{\underset{j=1}{\sum}}\lambda_j m_j }\frac{C}{\beta}\|\tilde{f}_m\|_{L^1(\mathbb{R}^{n-1},d\bar{x})}\\
   \leq & 2^{2\overset{n}{\underset{j=1}{\sum}}\lambda_j m_j }\frac{C}{\beta}\int_{\overset{n}{\underset{j=1}{\prod}}
      \left[ 2^{m_j-1},2^{m_j+2}\right)} |f(y)|dy\\
   \leq & \frac{C}{\beta} \int_{\overset{n}{\underset{j=1}{\prod}}
      \left[ 2^{m_j-1},2^{m_j+2}\right)} |f(y)| \overset{n}{\underset{j=1}{\prod}} y_j^{2\lambda_j}dy.
   \end{align*}
   Here, $C$ and $M$ denote positive constants that do not depend on $m\in \mathbb{Z}^n,$ and $m_0^{(k)}$ represents the Lebesgue measure on $\mathbb{R}^k.$\\

   From \eqref{B3} and \eqref{B4} we deduce that
   \begin{align}\label{B5}
      m_{\lambda_1, \dots, \lambda_n}&(\{ x \in (0, \infty)^n :
         \lvert g_{1,loc}^{\lambda_1, \dots, \lambda_n}(f)(x) - \sum_{m \in \mathbb{Z}^n} Y_m^{\lambda_1, \dots,\lambda_n}(f)(x) \rvert  > \gamma/2\})\nonumber\\
       & \leq \frac{C}{\gamma}\sum_{m\in \mathbb{Z}^n} \int_{\overset{n}{\underset{j=1}{\prod}}
         \left[ 2^{m_j-1},2^{m_j+2}\right)} |f(y)| \overset{n}{\underset{j=1}{\prod}} y_j^{2\lambda_j}dy\nonumber\\
       & \leq \frac{C}{\gamma} \|f\|_{L^1((0,\infty)^n,\overset{n}{\underset{j=1}{\prod}} x_j^{2\lambda_j}dx )}.
   \end{align}
   Finally, putting together \eqref{B1}, \eqref{B2} and \eqref{B5} we conclude that
   $$m_{\lambda_1, \dots, \lambda_n}(\{ x \in (0, \infty)^n :\lvert g_{1,loc}^{\lambda_1, \dots, \lambda_n}(f)(x) \rvert >\gamma \}) \leq \frac{C}{\gamma}\|f\|_{L^1((0,\infty)^n,\overset{n}{\underset{j=1}{\prod}} x_j^{2\lambda_j}dx )}. $$
   Thus Claim \ref{Claim1} is established. \qed\\

   We now prove Claim \ref{Claim2}. Minkowski inequality allow us to write
   \begin{align*}
      G^{\lambda_1, \ldots, \lambda_n}(f)(x) \leq &  \sum_{i=1}^n \left\{ \int_0^\infty t \left| \int_{(0, \infty)^n}
         \left( \frac{\partial}{\partial t}W_t^{\lambda_i} (x_i,y_i) \prod_{j=1;j\neq i}^n W_t^{\lambda_j}(x_j,y_j)
         \right. \right. \right.\\
      & - \prod_{j=1}^n \chi_{\{x_j/2<y_j<2x_j\}}(y_j)(x_iy_i)^{-\lambda_i}\frac{\partial}{\partial t}
         \left( \frac{e^{-(x_i-y_i)^2/4t}}{2\sqrt{\pi t}}\right)  \\
       & \times \left.\left.\left.   \prod_{j=1;j\neq i}^n \frac{e^{-(x_j-y_j)^2/4t}}{2\sqrt{\pi t}}(x_jy_j)^{-\lambda_j} \right)
         f(y) \prod_{j=1}^n y_j^{2\lambda_j}dy\right|^2dt\right\}^{1/2}\\
      = &  \sum_{i=1}^n G_i^{\lambda_1, \ldots, \lambda_n}(f)(x), \quad \quad x=(x_1, \ldots, x_n) \in (0,\infty)^n.
   \end{align*}
   By taking into account symmetries to show Claim \ref{Claim2}, it is sufficient to prove that $G_1^{\lambda_1,\ldots, \lambda_n}$ defines \;a bounded \; operator \;from \;$L^1((0,\infty)^n, \overset{n}{\underset{j=1}{\prod}} x_j^{2\lambda_j} dx)$
   into $L^{1,\infty}((0,\infty)^n, \overset{n}{\underset{j=1}{\prod}} x_j^{2\lambda_j} dx).$ \\

   Again by using Minkoswski inequality we have that
   \begin{align}\label{B6}
    G_1^{\lambda_1, \ldots, \lambda_n}&(f)(x)  \leq \left\{ \int_0^\infty t \left| \int_{(0,\infty)^n} \left( \frac{\partial}{\partial t}
        W_t^{\lambda_1}(x_1,y_1) - \chi_{\{x_1/2 < y_1 < 2x_1 \}}(y_1) (x_1y_1)^{-\lambda_1} \right.\right. \right. \nonumber\\
     & \times \left. \left. \left. \frac{\partial}{\partial t}
       \left( \frac{e^{-(x_1-y_1)^2/4t}}{2\sqrt{\pi t}}\right)\right)\prod_{j=2}^n W_t^{\lambda_j}(x_j,y_j)f(y)\prod_{j=1}^n y_j^{2\lambda_j}dy \right|^2 dt
       \right\}^{1/2}\nonumber\\
     &  + \sum_{i=2}^n \left\{ \int_0^\infty t \left| \int_{(0,\infty)^n} \chi_{\{x_1/2 < y_1 < 2x_1\}} (y_1)(x_1y_1)^{-\lambda_1}\frac{\partial}{\partial t}\left(
       \frac{e^{-(x_1-y_1)^2/4t}}{2\sqrt{\pi t}}\right)  \right. \right.\nonumber\\
     & \times \prod_{j=2}^{i-1}\chi_{\{x_j/2 < y_j < 2x_j\}} (y_j)(x_jy_j)^{-\lambda_j} \frac{e^{-(x_j-y_j)^2/4t}}{2\sqrt{\pi t}}
       \prod_{j=i+1}^n W_t^{\lambda_j}(x_j,y_j) \nonumber\\
     & \times \left. \left.\left( W_t^{\lambda_i}(x_i,y_i)-\chi_{\{x_i/2 < y_i < 2x_i \}}(y_i)(x_iy_i)^{-\lambda_i}\frac{e^{-(x_i-y_i)^2/4t}}{2\sqrt{\pi t}}\right) f(y) \prod_{j=1}^n y_j^{2\lambda_j}dy\right|^2 dt \right\}^{1/2} \nonumber  \\
   = & \sum_{i=1}^n G_{1,i}^{\lambda_1, \ldots, \lambda_n}(f)(x), \quad \quad x=(x_1,\ldots, x_n) \in (0, \infty)^n.
   \end{align}
   We are going to study $G_{1,i}^{\lambda_1, \ldots, \lambda_n},$ $i=1,\ldots, n.$ Firstly we considerer $G_{1,1}^{\lambda_1, \ldots, \lambda_n}.$ By splitting the integral on $y_1 \in (0, \infty)$ we get
   \begin{align}\label{B7}
     G_{1,1}^{\lambda_1, \ldots, \lambda_n}&(f)(x)\leq \left\{ \int_0^\infty t\left|\int_0^{x_1/2}\int_{(0,\infty)^{n-1}}
              \frac{\partial}{\partial t} W_t^{\lambda_1}(x_1,y_1)\prod_{j=2}^n W_t^{\lambda_j} (x_j,y_j) f(y) \prod_{j=1}^n y_j^{2\lambda_j}dy \right|^2 dt \right\}^{\frac{1}{2}} \nonumber\\
            & + \left\{ \int_0^\infty t \left| \int_{2x_1}^\infty \int_{(0,\infty)^{n-1}}  \frac{\partial}{\partial t}
              W_t^{\lambda_1}(x_1,y_1)\prod_{j=2}^n W_t^{\lambda_j} (x_j,y_j) f(y) \prod_{j=1}^n y_j^{2\lambda_j}dy \right|^2 dt \right\}^{\frac{1}{2}} \nonumber \\
            & + \left\{ \int_0^\infty t \left| \int_{x_1/2}^{2x_1} \int_{(0,\infty)^{n-1}} \left( \frac{\partial}{\partial t}W_t^{\lambda_1}(x_1,y_1)
              -(x_1y_1)^{-\lambda_1}\frac{\partial}{\partial t}\left( \frac{e^{-(x_1-y_1)^2/4t}}{2\sqrt{\pi t}}\right)\right) \right. \right.\nonumber\\
            &  \times\left. \left. \prod_{j=2}^n W_t^{\lambda_j} (x_j,y_j) f(y) \prod_{j=1}^n y_j^{2\lambda_j}dy \right|^2 dt
              \right\}^{\frac{1}{2}} \nonumber\\
            =&  \sum_{i=1}^3 G_{1,1;i}^{\lambda_1, \ldots, \lambda_n}(f)(x), \quad x=(x_1,\ldots, x_n) \in (0, \infty)^n.
   \end{align}
   According to \cite[proof of Lemma~8]{BHNV} we have, for every $\lambda > -1/2,$ $t,x,y \in (0,\infty)$ and $y<x/2,$
   \begin{equation}\label{B8}
      \left| \frac{\partial}{\partial t}W_t^\lambda(x,y)\right|\leq C \frac{(xy)^{-\lambda}x^2}{t^{5/2}}e^{-x^2/16t}, \quad xy>t,
   \end{equation}
   and
   \begin{equation}\label{B9}
      \left| \frac{\partial}{\partial t}W_t^\lambda(x,y)\right|\leq C \frac{1}{t^{\lambda + 3/2}}e^{-(x^2+y^2)/8t}, \quad xy\leq t.
   \end{equation}
   From \eqref{B8} we deduce that, when $0<y<x/2<\infty$,
   \begin{equation}
      \left| \frac{\partial}{\partial t}W_t^\lambda(x,y)\right|\leq C \frac{(xy)^{-\lambda-1/2}x^3}{t^{5/2}}e^{-x^2/16t}\leq
      \frac{C}{t^{\lambda + 3/2}}e^{-x^2/20t}, \quad xy>t>0,
   \end{equation}
   together  with \eqref{B9}, implies that, for every $\lambda>-1/2,$ $t,x,y\in (0,\infty),$ and $y<x/2,$
   \begin{equation}\label{B10}
      \left| \frac{\partial}{\partial t}W_t^\lambda(x,y)\right|\leq \frac{C}{t^{\lambda + 3/2}}e^{-x^2/20t}.
   \end{equation}

   It will be proved that the operator defined by $G_{1,1;1}^{\lambda_1, \ldots, \lambda_n}(f)(x)$ is bounded from $L^1((0,\infty)^n, $ $\overset{n}{\underset{j=1}{\prod}} x_j^{2\lambda_j} dx)$ into $L^{1,\infty}((0,\infty)^n, \overset{n}{\underset{j=1}{\prod}} x_j^{2\lambda_j} dx)$ when we show that, for every $1\leq l \leq m \leq n,$ the operator $S_{l,m}^{\lambda_1, \ldots, \lambda_n}$ defined by
   \begin{align*}
      S_{l,m}^{\lambda_1, \ldots, \lambda_n}(f)(x)=& \left\{ \int_0^\infty t \left| \int_0^{x_1/2} \ldots
         \int_0^{x_l/2}\int_{x_{l+1}/2}^{2x_{l+1}}\ldots \int_{x_{m}/2}^{2x_{m}}\int_{2x_{m+1}}^\infty \ldots \int_{2x_{n}}^\infty \right. \right.\\
      & \left. \left. \left| \frac{\partial}{\partial t}W_t^{\lambda_1}(x_1,y_1)\right| \prod_{j=2}^n W_t^{\lambda_j} (x_j,y_j) |f(y)| \prod_{j=1}^n y_j^{2\lambda_j}dy \right|^2 dt \right\}^{1/2},\quad x\in(0,\infty)^n,
   \end{align*}
   is bounded from $L^1((0,\infty)^n, \overset{n}{\underset{j=1}{\prod}} x_j^{2\lambda_j} dx)$
   into $L^{1,\infty}((0,\infty)^n, \overset{n}{\underset{j=1}{\prod}} x_j^{2\lambda_j} dx).$\\

   By using \eqref{A0} since the operator $H^k_\infty$ is bounded from $L^1((0,\infty)^k, \overset{k}{\underset{j=1}{\prod}} x_j^{2\alpha_j} dx)$ into itself, for every $k\in \mathbb{N}$ and
   $\alpha_j>-1/2,$ $j=1,\ldots,k,$ in order to prove that the operator $S_{l,m}^{\lambda_1,\ldots, \lambda_n},$ $1\leq l\leq m\leq n,$
   is bounded from $L^1((0,\infty)^n, \overset{n}{\underset{j=1}{\prod}} x_j^{2\lambda_j} dx)$ into $L^{1,\infty}((0,\infty)^n, \overset{n}{\underset{j=1}{\prod}} x_j^{2\lambda_j} dx)$ it is sufficient to show that the operator
   \begin{align*}
      \mathcal{S}_{l,m}^{\alpha_1, \ldots, \alpha_m}(f)(x) = &\left\{ \int_0^\infty t \left| \int_0^{x_1/2} \ldots
         \int_0^{x_l/2}\int_{x_{l+1}/2}^{2x_{l+1}}\ldots \int_{x_{m}/2}^{2x_{m}}\left| \frac{\partial}{\partial t}W_t^{\alpha_1}(x_1,y_1)\right|\right. \right. \\
      & \times\left. \left.  \prod_{j=2}^m W_t^{\alpha_j} (x_j,y_j) |f(y)| \prod_{j=1}^m y_j^{2\alpha_j}dy \right|^2 dt \right\}^{1/2}, \ x= (x_1, \ldots, x_m)\in (0,\infty)^m,
   \end{align*}
   is bounded from $L^1((0,\infty)^m, \overset{m}{\underset{j=1}{\prod}} x_j^{2\alpha_j} dx)$
   into $L^{1,\infty}((0,\infty)^m, \overset{m}{\underset{j=1}{\prod}} x_j^{2\alpha_j} dx), $ for every $\alpha_j>-1/2,$ $j=1, \ldots m,$ and $1\leq l \leq m.$\\

   Let $\alpha_j > -1/2,$ $j=1,\ldots m.$ According to \eqref{A3} and \eqref{B10}, we can write
   \begin{align*}
      \mathcal{S}_{m,m}^{\alpha_1, \ldots, \alpha_m}(f)(x)& \leq C \left\{ \int_0^\infty t \left| \int_0^{x_1/2} \ldots
         \int_0^{x_m/2} \frac{e^{-(\overset{m}{\underset{j=1}{\sum}} x_j^2)/20t}}{t^{ \overset{m}{\underset{j=1}{\sum}}(\alpha_j+1/2)+1 }} |f(y)| \prod_{j=1}^m y_j^{2\alpha_j} dy \right|^2dt \right\}^{1/2}\\
      & \leq C\left\{ \int_0^\infty \frac{e^{-(\overset{m}{\underset{j=1}{\sum}} x_j^2)/10t}}{t^{ 2\overset{m}{\underset{j=1}{\sum}}(\alpha_j+1/2)+1 }} dt\right\}^{1/2} \int_0^{x_1/2} \ldots \int_0^{x_m/2}|f(y)|\prod_{j=1}^m y_j^{2\alpha_j} dy \\
      & \leq C L_{\alpha_1, \ldots, \alpha_m}(|f|)(x), \quad x=(x_1, \ldots, x_m) \in (0, \infty)^m.
   \end{align*}
   Then, $\mathcal{S}_{m,m}^{\alpha_1, \ldots, \alpha_m}$ is bounded from $L^1((0,\infty)^m, \overset{m}{\underset{j=1}{\prod}} x_j^{2\alpha_j} dx)$ into $L^{1,\infty}((0,\infty)^m, \overset{m}{\underset{j=1}{\prod}} x_j^{2\alpha_j} dx),$
   because $L_{\alpha_1, \ldots, \alpha_m}$ has this property.\\

   Assume that $1\leq l < m.$ By \eqref{A6}, since $H_{loc}^{\alpha_1, \ldots, \alpha_k},$ $1\leq k \leq m,$ is bounded from $L^1((0,\infty)^k,$ $ \overset{k}{\underset{j=1}{\prod}} x_j^{2\alpha_j} dx)$ into itself and $\mathcal{S}_{m,m}^{\alpha_1, \ldots, \alpha_m}$
   from $L^1((0,\infty)^k, \overset{k}{\underset{j=1}{\prod}} x_j^{2\alpha_j} dx)$ into $L^{1,\infty}((0,\infty)^k, \overset{k}{\underset{j=1}{\prod}} x_j^{2\alpha_j} dx),$ \cite[Proposition 1]{Din} implies that $\mathcal{S}_{l,m}^{\alpha_1, \ldots, \alpha_m}$
   is bounded from $L^1((0,\infty)^m, \overset{m}{\underset{j=1}{\prod}} x_j^{2\alpha_j} dx)$ into $L^{1,\infty}((0,\infty)^m,$ $\overset{m}{\underset{j=1}{\prod}} x_j^{2\alpha_j} dx),$ provided that, for every $k\in \mathbb{N}$, $m<k\le n$, the operator
   \begin{align*}
      T_{l,k}^{\alpha_1, \ldots, \alpha_k}(f)(x) =& \left\{ \int_0^\infty t \left| \int_0^{x_1/2} \ldots
         \int_0^{x_l/2}\int_{x_{l+1}/2}^{2x_{l+1}}\ldots \int_{x_{k}/2}^{2x_{k}} \frac{e^{-(\overset{l}{\underset{j=1}{\sum}}x_j^2 + \overset{k}{\underset{j=l+1}{\sum}} (x_j-y_j)^2) /20t}}{t^{\overset{l}{\underset{j=1}{\sum}} (\alpha_j + \frac{1}{2})+1 + \frac{k-l}{2}}}  \right. \right.\\
      & \times\left. \left.  \prod_{j=l+1}^k (x_jy_j)^{-\alpha_j} |f(y)| \prod_{j=1}^k y_j^{2\alpha_j}dy \right|^2 dt \right\}^{1/2}, \quad x=(x_1, \ldots, x_k) \in (0, \infty)^k,
   \end{align*}
   has this property. Let $k\in \mathbb{N}$, $m<k\le n$. Minkowski's inequality leads to
   \begin{align*}
      T_{l,k}^{\alpha_1, \ldots, \alpha_k}(f)(x) \leq & C \int_0^{x_1/2} \ldots
         \int_0^{x_l/2}\int_{x_{l+1}/2}^{2x_{l+1}}\ldots \int_{x_{k}/2}^{2x_{k}} \prod_{j=l+1}^{k}(x_jy_j)^{-\alpha_j}|f(y)|\\
      & \times \left\{\int_0^\infty \frac{e^{-(\overset{l}{\underset{j=1}{\sum}}x_j^2 + \overset{k}{\underset{j=l+1}{\sum}}
         (x_j-y_j)^2) /10t}}{t^{2\overset{l}{\underset{j=1}{\sum}} (\alpha_j + \frac{1}{2})+1 + {k-l}}} dt\right\}^{1/2} \prod_{j=1}^k y_j^{2 \alpha_j}dy\\
      \leq & C \mathcal{H}_{l,k}^{ \alpha_1, \ldots, \alpha_k}(|f|)(x), \quad x=(x_1, \ldots, x_k) \in (0, \infty)^k.
   \end{align*}
   Hence, $T_{l,k}^{\alpha_1, \ldots, \alpha_k}$ is bounded from $L^1((0,\infty)^k, \overset{k}{\underset{j=1}{\prod}} x_j^{2\alpha_j} dx)$ into
   $L^{1,\infty}((0,\infty)^k, \overset{k}{\underset{j=1}{\prod}} x_j^{2\alpha_j} dx), $ because $\mathcal{H}_{l,k}^{ \alpha_1, \ldots, \alpha_k}$ has this property.\\

   We have proved that $G_{1,1;1}^{\lambda_1, \ldots, \lambda_n}$ is bounded from $L^1((0,\infty)^n, \overset{n}{\underset{j=1}{\prod}} x_j^{2\lambda_j} dx)$ into
    $L^{1,\infty}((0,\infty)^n, $ $\overset{n}{\underset{j=1}{\prod}} x_j^{2\lambda_j} dx). $\\

  In order to see that $G_{1,1;2}^{\lambda_1, \ldots, \lambda_n}$ is bounded from $L^1((0,\infty)^n, \overset{n}{\underset{j=1}{\prod}} x_j^{2\lambda_j} dx)$
   into $L^{1,\infty}((0,\infty)^n,$ $ \overset{n}{\underset{j=1}{\prod}} x_j^{2\lambda_j} dx), $ we must prove that, for every $1<l\leq m \leq n,$ the operator
  \begin{align*}
      {S}_{l,m}^{\alpha_1, \ldots, \alpha_m}(f)(x)& = \left\{ \int_0^\infty t \left| \int_{2x_1}^\infty \ldots \int_{2x_l}^\infty
         \int_0^{x_{l+1}/2} \ldots\int_0^{x_m/2}\int_{x_{m+1}/2}^{2x_{m+1}}\ldots \int_{x_{n}/2}^{2x_{n}}\right. \right. \\
      & \left. \left. \left| \frac{\partial}{\partial t}W_t^{\lambda_1}(x_1,y_1)\right| \prod_{j=2}^n W_t^{\lambda_j} (x_j,y_j)
         |f(y)| \prod_{j=1}^n y_j^{2\lambda_j}dy \right|^2 dt \right\}^{1/2}, \quad x\in (0,\infty)^m,
  \end{align*}
  is bounded from $L^1((0,\infty)^n, \overset{n}{\underset{j=1}{\prod}} x_j^{2\lambda_j} dx)$ into
  $L^{1,\infty}((0,\infty)^n, \overset{n}{\underset{j=1}{\prod}} x_j^{2\lambda_j} dx). $\\

  Note firstly that if $l<m,$ we can proceed as in the previous cases (in the analysis of $G_{1,1;1}^{\lambda_1, \ldots, \lambda_n}$) to see that $S_{l,m}^{\lambda_1, \ldots, \lambda_n}$
  is bounded from $L^1((0,\infty)^n, \overset{n}{\underset{j=1}{\prod}} x_j^{2\lambda_j} dx)$ into $L^{1,\infty}((0,\infty)^n, \overset{n}{\underset{j=1}{\prod}} x_j^{2\lambda_j} dx). $\\

  Suppose now that $l=m.$ Since the operator $H_\infty^k$ is bounded from $L^1((0,\infty)^k, \overset{k}{\underset{j=1}{\prod}} x_j^{2\alpha_j} dx)$
  into itself, for every $\alpha_j > -1/2,$ $j=1, \ldots, k,$ in order to prove that $S_{m,m}^{\lambda_1,\ldots ,\lambda_n}$ is bounded from
  $L^1((0,\infty)^n, \overset{n}{\underset{j=1}{\prod}} x_j^{2\lambda_j} dx)$ into $L^{1,\infty}((0,\infty)^n, \overset{n}{\underset{j=1}{\prod}} x_j^{2\lambda_j} dx), $
  it is sufficient to show that the operator
  \begin{align*}
     S_k^{\alpha_1, \ldots, \alpha_k}(f)(x) = &\left\{ \int_0^\infty t \left| \int_{2x_1}^\infty \int_{\frac{x_2}{2}}^{2x_2} \ldots \int_{\frac{x_k}{2}}^{2x_k}
        \left| \frac{\partial}{\partial t}W_t^{\alpha_1}(x_1,y_1)\right|\prod_{j=2}^k W_t^{\alpha_j} (x_j,y_j)
         |f(y)| \prod_{j=1}^k y_j^{2\alpha_j}dy \right|^2 dt \right\}^{\frac{1}{2}}
  \end{align*}
  is bounded from $L^1((0,\infty)^k, \overset{k}{\underset{j=1}{\prod}} x_j^{2\alpha_j} dx)$ into $L^{1,\infty}((0,\infty)^k, \overset{k}{\underset{j=1}{\prod}} x_j^{2\alpha_j} dx), $
  for every $\alpha_j>-1/2,$ $j=1, \ldots, k,$ and for each $x=(x_1,\ldots,x_k) \in (0,\infty)^k.$\\

  If $\alpha_j>-{1/2}, $ $j=1,\ldots, k,$ since $H_{loc}^{\beta_1, \ldots, \beta_l}$ is bounded  from $L^1((0,\infty)^l, \overset{l}{\underset{j=1}{\prod}} x_j^{2\beta_j} dx)$
  into itself when $\beta_j>-1/2,$ $j=1, \ldots, l,$ and $l \in \mathbb{N},$ by taking into account that $W_t^\lambda(x,y)=W_t^\lambda(y,x)$, $t,x,y\in (0,\infty)$,
  and \eqref{A6} and \eqref{B10}, the desired property for $S_k^{\alpha_1, \ldots, \alpha_k}$ follows when we see that the operator
  \begin{align*}
     T_k^{\beta_1, \ldots, \beta_l}(f)(x)=&\left\{ \int_0^\infty t \left| \int_{2x_1}^\infty \int_{x_2/2}^{2x_2} \ldots \int_{x_l/2}^{2x_l}
        \frac{e^{- \frac{y_1^2 + \overset{l}{\underset{j=2}{\sum}} (x_j-y_j)^2}{20t}}}{t^{\beta_1+ \frac{3}{2}+\frac{l-1}{2}}}\prod_{j=2}^l (x_jy_j)^{-\beta_j}
         f(y) \prod_{j=1}^l y_j^{2\beta_j}dy \right|^2 dt \right\}^{\frac{1}{2}}
  \end{align*}
  is bounded from $L^1((0,\infty)^l, \overset{l}{\underset{j=1}{\prod}} x_j^{2\beta_j} dx)$ into $L^{1, \infty}((0,\infty)^l, \overset{l}{\underset{j=1}{\prod}} x_j^{2\beta_j} dx),$
  for every $\beta_j >-1/2, $ $j=1, \ldots, l,$ $l \in \mathbb{N},$ and $x=(x_1,\ldots,x_l) \in (0,\infty)^l .$\\

  Let $\beta_j>-1/2,$ $j=1,\ldots, l$ and $l\in \mathbb{N}.$ Minkowski's inequality leads to
  \begin{align*}
     |T_k^{\beta_1, \ldots, \beta_l}(f)(x)|\leq & \int_{2x_1}^\infty \int_{x_2/2}^{2x_2} \ldots \int_{x_l/2}^{2x_l} \left\{ \int_0^\infty
        \frac{e^{- \frac{y_1^2 + \overset{l}{\underset{j=2}{\sum}} (x_j-y_j)^2}{10t}}}{t^{2\beta_1+l+1}} dt \right\}^\frac{1}{2} \prod_{j=2}^l (x_jy_j)^{-\beta_j} |f(y)| \prod_{j=1}^l y_j^{2\beta_j}dy \\
     \leq & C  \int_{2x_1}^\infty \int_{x_2/2}^{2x_2} \ldots \int_{x_l/2}^{2x_l} \frac{\overset{l}{\underset{j=2}{\prod}} (x_jy_j)^{-\beta_j} |f(y)|
            }{(y_1^2 + \overset{l}{\underset{j=2}{\sum}} (x_j-y_j)^2)^{\beta_1+ \frac{l}{2}}}  \prod_{j=1}^l y_j^{2\beta_j}dy,
            \quad x \in (0, \infty)^l.
  \end{align*}
  Then, by proceeding as in the proof of the main property (Case 3) in \cite{NS} we can see that the operator $T_k^{\beta_1, \ldots, \beta_l}$ is bounded from
  $L^1((0,\infty)^l, \overset{l}{\underset{j=1}{\prod}} x_j^{2\beta_j} dx)$ into $L^{1, \infty}((0,\infty)^l, \overset{l}{\underset{j=1}{\prod}} x_j^{2\beta_j} dx).$\\

  We conclude that $G_{1,1;2}^{\lambda_1, \ldots, \lambda_n}$ defines a bounded operator from $L^1((0,\infty)^n,$ $ \overset{n}{\underset{j=1}{\prod}} x_j^{2\lambda_j} dx)$
   into $L^{1, \infty}((0,\infty)^n, \overset{n}{\underset{j=1}{\prod}} x_j^{2\lambda_j} dx).$\\

  We are going to work with $G_{1,1;3}^{\lambda_1, \ldots, \lambda_n}.$ According to \cite[proof of Lemma~8]{BHNV} we have, for every
  $\lambda>-1/2,$ that
  \begin{align}\label{B11}
     \left| \frac{\partial}{\partial t} W_t^\lambda (x,y)- (xy)^{- \lambda} \frac{\partial}{\partial t} \mathbb{W}_t(x,y)\right| \leq
        C\frac{(xy)^{-\lambda-1}}{\sqrt{t}}& e^{-(x-y)^2/8t}, \ t,x,y \in (0, \infty),\ xy\geq t,
  \end{align}
  and
  \begin{equation}\label{B12}
     \left|\frac{\partial}{\partial t} W_t^\lambda (x,y)\right| \leq C \frac{e^{-(x-y)^2/8t}}{t^{\lambda + 3/2}}, \quad t,x,y \in (0,\infty), \; xy<t.
  \end{equation}
  By combining \eqref{B3,5}, \eqref{B11} and \eqref{B12} in [\cite{BHNV}, Lemma 8] it was established that
  \begin{equation}\label{B13}
     \left(\int_0^\infty t\left| \frac{\partial}{\partial t} W_t^\lambda (x,y)- (xy)^{- \lambda} \frac{\partial}{\partial t} \mathbb{W}_t(x,y) \right|^2dt \right)^{\frac{1}{2}}\leq \frac{C}{x^{2\lambda+1}},
      \quad 0<\frac{x}{2}< y < 2x,
  \end{equation}
  provided that $\lambda>-1/2.$\\

  The operator defined by $G_{1,1;3}^{\lambda_1, \ldots, \lambda_n}$ is bounded from $L^1((0,\infty)^n, \overset{n}{\underset{j=1}{\prod}} x_j^{2\lambda_j} dx)$ into
  $L^{1,\infty}((0,\infty)^n,$ $ \overset{n}{\underset{j=1}{\prod}} x_j^{2\lambda_j} dx)$ provided that, for every $1\leq l \leq m \leq n$ and
  $\alpha_j>-1/2, $ $j=1, \ldots, n,$ the operator
  \begin{align*}
     S&_{l,m}^{\alpha_1, \ldots, \alpha_n}(f)(x) = \left\{ \int_0^\infty t \left| \int_{x_{1/2}}^{2x_1}\left| \frac{\partial}{\partial t}
        W_t^{\alpha_1} (x_1,y_1)- (x_1y_1)^{- \alpha_1} \frac{\partial}{\partial t} \mathbb{W}_t(x_1,y_1)\right|\int_{\frac{x_2}{2}}^{2x_2} \ldots \int_{\frac{x_l}{2}}^{2x_l} \right. \right. \\
        &  \left. \left.\int_0^{\frac{x_{l+1}}{2}} \ldots \int_0^{{\frac{x_m}{2}}}
        \int_{2x_{m+1}}^\infty \ldots \int_{2x_n}^\infty \prod_{j=2}^n W_t^{\alpha_j}(x_j,y_j) |f(y)|  \prod_{j=1}^n y_j^{2\alpha_j} dy \right|^2 dt \right\}^{1/2}, \ x \in (0, \infty)^n,
  \end{align*}
  is bounded from $L^1((0,\infty)^n, \overset{n}{\underset{j=1}{\prod}} x_j^{2\alpha_j} dx)$ into
  $L^{1,\infty}((0,\infty)^n, \overset{n}{\underset{j=1}{\prod}} x_j^{2\alpha_j} dx).$\\

  Suppose that $l<m$ and $\alpha_j>-1/2, $ $ j=1, \ldots, n.$ According to \eqref{B13} and by using Minkowski inequality we obtain that
  \begin{align*}
     S_{l,m}^{\alpha_1, \ldots, \alpha_n}(f)(x) \leq &C\int_{\frac{x_1}{2}}^{2x_1}\frac{1}{x_1^{2\alpha_1+1}}\int_{\frac{x_2}{2}}^{2x_2} \ldots \int_{\frac{x_l}{2}}^{2x_l}\int_0^{\frac{x_{l+1}}{2}} \ldots \int_0^{{\frac{x_m}{2}}} \int_{2x_{m+1}}^\infty \ldots \int_{2x_n}^\infty\\
     & \sup_{t>0} \prod_{j=2}^n W_t^{\alpha_j}(x_j,y_j) |f(y)|\prod_{j=1}^n y_j^{2\alpha_j} dy, \quad x=(x_1, \ldots, x_n) \in (0, \infty)^n.
  \end{align*}
  In Section 2 it was proved that the operator
  \begin{align*}
     T_{l,m}^{\alpha_2, \ldots, \alpha_n}(g)(x)=&\int_{\frac{x_2}{2}}^{2x_2} \ldots \int_{\frac{x_l}{2}}^{2x_l}\int_0^{\frac{x_{l+1}}{2}} \ldots \int_0^{{\frac{x_m}{2}}} \int_{2x_{m+1}}^\infty \ldots \int_{2x_n}^\infty\\
     & \sup_{t>0} \prod_{j=2}^n W_t^{\alpha_j}(x_j,y_j) |g(y)|\prod_{j=2}^n y_j^{2\alpha_j} dy, \quad x=(x_2, \ldots, x_n) \in (0, \infty)^{n-1},
  \end{align*}
  is bounded from $L^1((0,\infty)^{n-1}, \overset{n}{\underset{j=2}{\prod}} x_j^{2\alpha_j} dx)$ into
  $L^{1,\infty}((0,\infty)^{n-1}, \overset{n}{\underset{j=2}{\prod}} x_j^{2\alpha_j} dx).$ Moreover, $H_{loc}^{\alpha_1}$ is bounded from
  $L^1((0,\infty), x^{2\alpha_1}dx)$ into itself. Then, \cite[Proposition 1]{Din} implies that $S_{l,m}^{\alpha_1, \ldots, \alpha_n}$
  is a bounded operator from $L^1((0,\infty)^n, \overset{n}{\underset{j=1}{\prod}} x_j^{2\alpha_j} dx)$ into
  $L^{1,\infty}((0,\infty)^n, \overset{n}{\underset{j=1}{\prod}} x_j^{2\alpha_j} dx).$\\

  Since the operator $H_{\infty}^k$ is bounded from $L^1((0, \infty)^k, \overset{k}{\underset{j=1}{\prod}} x_j^{2\beta_j}dx)$ into itself, for every $\beta_j>-1/2,$ $j=1, \ldots, k,$ we need only to see that the operator
  \begin{align*}
     \mathcal{S}_{k}^{\beta_1, \ldots, \beta_k}(g)(x) =&\left\{ \int_0^\infty t \left| \int_{\frac{x_1}{2}}^{2x_1}\int_{\frac{x_2}{2}}^{2x_2} \ldots
        \int_{\frac{x_k}{2}}^{2x_k}\left( \frac{\partial}{\partial t}
        W_t^{\beta_1} (x_1,y_1)- (x_1y_1)^{- \beta_1} \frac{\partial}{\partial t} \mathbb{W}_t(x_1,y_1)\right) \right. \right. \\
     &  \times  \left. \left. \prod_{j=2}^k W_t^{\beta_j}(x_j,y_j) g(y)\prod_{j=1}^k y_j^{2\beta_j} dy \right|^2 dt \right\}^{1/2}, \quad x=(x_1, \ldots, x_k) \in (0, \infty)^k,
  \end{align*}
  is bounded from $L^1((0, \infty)^k, \overset{k}{\underset{j=1}{\prod}} x_j^{2\beta_j}dx)$ into
  $L^{1, \infty}((0, \infty)^k, \overset{k}{\underset{j=1}{\prod}} x_j^{2\beta_j}dx),$ when $\beta_j >-1/2,$ $j=1, \ldots, k,$ to show that the operator
  $S_{m,m}^{\alpha_1, \ldots, \alpha_n}, $ $ m=1, \ldots, n,$ has desired boundedness property.\\

  Let $\beta_j>-1/2,$ $j=1,\ldots,k.$ By \eqref{B3,5}, \eqref{B11} and \eqref{B12} we have that
  \begin{align*}
     \mathcal{S}_{k}^{\beta_1, \ldots, \beta_k}(g)(x) = &\left\{ \left( \int_0^{x_1^2}+ \int_{x_1^2}^\infty\right) t
        \left| \int_{{\frac{x_1}{2}}}^{2x_1}\int_{\frac{x_2}{2}}^{2x_2} \ldots\int_{\frac{x_k}{2}}^{2x_k}\left( \frac{\partial}{\partial t}
        W_t^{\beta_1} (x_1,y_1) \right. \right. \right.\\
     & -\left. \left.\left.  (x_1y_1)^{- \beta_1} \frac{\partial}{\partial t} \mathbb{W}_t(x_1,y_1)\right) \prod_{j=2}^k W_t^{\beta_j}(x_j,y_j) g(y)
     \prod_{j=1}^k y_j^{2\beta_j} dy \right|^2 dt \right\}^{1/2}
  \end{align*}
  \begin{align*}
     \leq& C \left[ \left\{\int_0^{x_1^2} t \left|\int_{{\frac{x_1}{2}}}^{2x_1}\ldots\int_{\frac{x_k}{2}}^{2x_k}(x_1y_1)^{-\beta_1-1}
        \frac{e^{-(x_1-y_1)^2/8t}}{\sqrt{t}}\prod_{j=2}^k W_t^{\beta_j}(x_j,y_j) g(y)\prod_{j=1}^k y_j^{2\beta_j} dy \right|^2 dt \right\}^{1/2}\right.\\
     & + \left\{\int_{x_1^2}^\infty t \left|\int_{{\frac{x_1}{2}}}^{2x_1}\ldots\int_{\frac{x_k}{2}}^{2x_k}
        \frac{e^{-(x_1-y_1)^2/8t}}{t^{\beta_1+3/2}}\prod_{j=2}^k W_t^{\beta_j}(x_j,y_j) g(y)\prod_{j=1}^k y_j^{2\beta_j} dy \right|^2 dt \right\}^{1/2}\\
     & + \left.\left\{\int_{x_1^2}^\infty t \left|\int_{{\frac{x_1}{2}}}^{2x_1}\ldots\int_{\frac{x_k}{2}}^{2x_k}
         (x_1y_1)^{- \beta_1} \frac{\partial}{\partial t} \mathbb{W}_t(x_1,y_1)\prod_{j=2}^k W_t^{\beta_j}(x_j,y_j) g(y)\prod_{j=1}^k y_j^{2\beta_j} dy \right|^2 dt \right\}^{1/2}\right]\\
     \leq & C \left[\left\{ \int_0^{x_1^2}tdt\right\}^{1/2}\sup_{t>0} \int_{{\frac{x_1}{2}}}^{2x_1}\ldots\int_{\frac{x_k}{2}}^{2x_k}(x_1y_1)^{-\beta_1-1}
        \frac{e^{-(x_1-y_1)^2/8t}}{\sqrt{t}}\prod_{j=2}^k W_t^{\beta_j}(x_j,y_j) |g(y)|\prod_{j=1}^k y_j^{2\beta_j} dy \right.\\
     & + \left\{ \int_{x_1^2}^\infty \frac{1}{t^{2\beta_1+2}}dt\right\}^{1/2}
        \sup_{t>0}\int_{{\frac{x_1}{2}}}^{2x_1}\ldots\int_{\frac{x_k}{2}}^{2x_k}
        \prod_{j=2}^k W_t^{\beta_j}(x_j,y_j) |g(y)|\prod_{j=1}^k y_j^{2\beta_j} dy \\
     & + \left.\left\{ \int_{x_1^2}^\infty \frac{dt}{t^2}\right\}^{1/2}x_1^{-2\beta_1}
        \sup_{t>0}\int_{{\frac{x_1}{2}}}^{2x_1}\ldots\int_{\frac{x_k}{2}}^{2x_k}
        \prod_{j=2}^k W_t^{\beta_j}(x_j,y_j) |g(y)|\prod_{j=1}^k y_j^{2\beta_j} dy \right]\\
     \leq & C \left[ \sup_{t>0} \int_{{\frac{x_1}{2}}}^{2x_1}\ldots\int_{\frac{x_k}{2}}^{2x_k}(x_1y_1)^{-\beta_1}
        \frac{e^{-(x_1-y_1)^2/8t}}{\sqrt{t}}\prod_{j=2}^k W_t^{\beta_j}(x_j,y_j) |g(y)|\prod_{j=1}^k y_j^{2\beta_j} dy\right.\\
     & + \left.\sup_{t>0}\int_{{\frac{x_1}{2}}}^{2x_1}\ldots\int_{\frac{x_k}{2}}^{2x_k}
        \prod_{j=2}^k W_t^{\beta_j}(x_j,y_j)\frac{1}{x_1^{2\beta_1+1}} \int_{\frac{x_1}{2}}^{2x_1} g(y)y_1^{2\beta_1}dy_1 \prod_{j=2}^k y_j^{2\beta_j} dy_k \ldots dy_2
        \right], \ x \in (0, \infty)^k.
  \end{align*}
  Since the operator $H_{loc}^{\beta_1}$ is bounded from $L^1((0,\infty), y^{2\beta_1}dy)$ into itself, by taking into account the results established in the proof of Theorem \ref{maximal} (see Section 2), we conclude that the operator $\mathcal{S}_k^{\beta_1, \ldots, \beta_k}$ is bounded from
  $L^1((0, \infty)^k, \overset{k}{\underset{j=1}{\prod}} x_j^{2\beta_j}dx)$ into $L^{1, \infty}((0, \infty)^k, \overset{k}{\underset{j=1}{\prod}} x_j^{2\beta_j}dx).$\\

  Thus, we prove that $G_{1,1;3}^{\lambda_1, \ldots, \lambda_n}$ defines a bounded operator from
  $L^1((0, \infty)^n, \overset{n}{\underset{j=1}{\prod}} x_j^{2\lambda_j}dx)$ into $L^{1, \infty}((0, \infty)^n, \overset{n}{\underset{j=1}{\prod}} x_j^{2\lambda_j}dx).$\\

  According to \eqref{B7} we establish that the operator defined by $G_{1,1}^{\lambda_1, \ldots, \lambda_n}$ is bounded from
  $L^1((0, \infty)^n,$ $ \overset{n}{\underset{j=1}{\prod}} x_j^{2\lambda_j}dx)$ into $L^{1, \infty}((0, \infty)^n, \overset{n}{\underset{j=1}{\prod}} x_j^{2\lambda_j}dx).$\\

  Assume now that $i=2,\ldots, n.$ We analyze the operator $G_{1,i}^{\lambda_1, \ldots, \lambda_n}.$ In order to do this we write
  \begin{align*}
    G&_{1,i}^{\lambda_1, \ldots, \lambda_n}(f)(x) \leq \left\{ \int_0^\infty t
       \left|\int_{{\frac{x_1}{2}}}^{2x_1}\ldots\int_{\frac{x_{i-1}}{2}}^{2x_{i-1}} (x_1y_1)^{- \lambda_1}\frac{\partial}{\partial t}\left(
       \frac{e^{\frac{-(x_1-y_1)^2}{4t}}}{2\sqrt{\pi t}}\right) \prod_{ j=2}^{i-1}(x_jy_j)^{-\lambda_j}\frac{e^{\frac{-(x_j-y_j)^2}{4t}}}{ 2\sqrt{\pi t}}
        \right. \right.\\
    & \times \left. \left. \int_0^{\frac{x_i}{2}}\int_{(0, \infty)^{n-i}}
       \prod_{j=i}^n W_t^{\lambda_j}(x_j,y_j) f(y)\prod_{j=1}^n y_j^{2\lambda_j} dy\right|^2dt\right\}^{1/2}+ \left\{ \int_0^\infty t\left|\int_{{\frac{x_1}{2}}}^{2x_1}
       \ldots\int_{\frac{x_{i-1}}{2}}^{2x_{i-1}}(x_1y_1)^{- \lambda_1}  \right. \right.\\
    & \times \left. \left.  \frac{\partial}{\partial t}\left( \frac{e^{\frac{-(x_1-y_1)^2}{4t}}}{2\sqrt{\pi t}}\right)
      \prod_{ j=2}^{i-1}(x_jy_j)^{-\lambda_j}\frac{e^{\frac{-(x_j-y_j)^2}{4t}}}{ 2 \sqrt{\pi t}} \int_{2x_i}^{\infty}
       \int_{(0, \infty)^{n-i}} \prod_{j=i}^n W_t^{\lambda_j}(x_j,y_j) f(y)\prod_{j=1}^n y_j^{2\lambda_j} dy\right|^2dt\right\}^{1/2}
  \end{align*}
  \begin{align*}
      &+ \left\{ \int_0^\infty t\left|\int_{{\frac{x_1}{2}}}^{2x_1}\ldots\int_{\frac{x_{i-1}}{2}}^{2x_{i-1}}(x_1y_1)^{- \lambda_1} \frac{\partial}{\partial t}\left(
       \frac{e^{\frac{-(x_1-y_1)^2}{4t}}}{2\sqrt{\pi t}}\right)\prod_{ j=2}^{i-1}(x_jy_j)^{-\lambda_j}\frac{e^{\frac{-(x_j-y_j)^2}{4t}}}{ 2\sqrt{\pi t}}
        \right. \right.\\
    & \times \left. \left.\int_{\frac{x_i}{2}}^{2x_i}\left( W_t^{\lambda_i}(x_i,y_i) -(x_iy_i)^{- \lambda_i} \frac{e^{\frac{-(x_i-y_i)^2}{4t}}}
       {2\sqrt{\pi t}}\right) \int_{(0,\infty)^{n-i}}
       \prod_{j=i+1}^n W_t^{\lambda_j}(x_j,y_j) f(y)\prod_{j=1}^n y_j^{2\lambda_j} dy\right|^2dt\right\}^{1/2}\\
    =&\sum_{l=1}^3 G_{1,i;l}(f)(x), \quad x=(x_1,\ldots, x_n) \in (0,\infty)^n.
  \end{align*}
  In order to analyze the boundedness property for $G_{1,i;1}$ and $G_{1,i;2}$ we can follow a similar way  to the one employed to study $G_{1,1;1}$ and
  $G_{1,1;2},$ respectively.\\

  On the other hand, by \eqref{E1} we get, for every $\lambda>-1/2,$ $t,x,y\in (0, \infty) ,$
  \begin{align}\label{B14}
     \left|  W_t^\lambda (x,y)- (xy)^{- \lambda} \mathbb{W}_t(x,y) \right|
        \leq C &\left( \frac{1}{t^{\lambda+1/2}} + \frac{(xy)^{-\lambda}}{\sqrt{t}}\right)e^{- \frac{x^2+ y^2}{4t}}, \quad xy \leq t.
  \end{align}
  Also, \eqref{E2} implies that
  \begin{align}\label{B15}
     \left|  W_t^\lambda (x,y)- (xy)^{- \lambda}  \mathbb{W}_t(x,y) \right|
        \leq C & (xy)^{-\lambda-1}\sqrt{t}e^{- \frac{(x-y)^2}{4t}}, \quad xy > t,
  \end{align}
  when $\lambda>-1/2,$ $t,x,y\in (0, \infty).$\\

  By using \eqref{B14} and \eqref{B15}, and proceeding as in the study of $G_{1,1;3}$ we can prove that $G_{1,i;3}$ defines a bounded operator from $L^1((0, \infty)^n, \overset{n}{\underset{j=1}{\prod}} x_j^{2\lambda_j}dx)$
  into $L^{1, \infty}((0, \infty)^n, \overset{n}{\underset{j=1}{\prod}} x_j^{2\lambda_j}dx).$\\

  Thus, we conclude that $G_{1,i}$ is bounded from $L^1((0, \infty)^n, \overset{n}{\underset{j=1}{\prod}} x_j^{2\lambda_j}dx)$
  into $L^{1, \infty}((0, \infty)^n,$ $ \overset{n}{\underset{j=1}{\prod}} x_j^{2\lambda_j}dx).$\\

  Hence, the operator $G_1$ is bounded from $L^1((0, \infty)^n, \overset{n}{\underset{j=1}{\prod}} x_j^{2\lambda_j}dx)$
  into $L^{1, \infty}((0, \infty)^n,\overset{n}{\underset{j=1}{\prod}} x_j^{2\lambda_j}$ $dx),$ and the proof of Claim \ref{Claim2} is finished. \qed

  \section{Proof of Theorem \ref{Riesz}}

    In this section we prove that the Riesz transforms $R_i^{\lambda_1, \dots, \lambda_n}$, $i=1, \dots, n$, associated with the Bessel operator
    $\Delta_{\lambda_1, \dots, \lambda_n}$, are bounded from $L^1((0,\infty)^n,\overset{n}{\underset{j=1}{\prod}} x_j^{2\lambda_j}dx)$
    into $L^{1,\infty}((0,\infty)^n,\overset{n}{\underset{j=1}{\prod}}x_j^{2\lambda_j}dx)$.\\

    For every $i=1, \dots, n$, and $\lambda_j > -1/2$, $j=1, \dots, n$, the Riesz transform $R_i^{\lambda_1, \dots, \lambda_n}$ is formally defined  by
    \begin{equation}\label{C1}
      R_i^{\lambda_1, \dots, \lambda_n} = \frac{\partial}{\partial x_i} \Delta_{\lambda_1, \dots , \lambda_n}^{-1/2},
    \end{equation}
    where $\Delta_{\lambda_1, \dots , \lambda_n}^{-1/2}$ denotes the negative square root of the operator $ \Delta_{\lambda_1, \dots , \lambda_n}$. We are going
    to precise the definition \eqref{C1}.\\

    Assume that $0 < \beta < \overset{n}{\underset{j=1}{\sum}} (\lambda_j + 1/2)+1 $ and $\lambda_j > -1/2$, $j=1, \dots, n$. We define the negative power
    $ \Delta_{\lambda_1, \dots , \lambda_n}^{-\beta}$ on $C_c^\infty((0, \infty)^n)$ as follows
    \begin{align*}
      \Delta_{\lambda_1, \dots , \lambda_n}^{-\beta}f(x)
         = &\frac{1}{\Gamma(\beta)} \int_0^\infty \left( W_t^{\lambda_1, \dots, \lambda_n} (f)(x)
            - \chi_{(1,\infty)}(t) \frac{t^{-\overset{n}{\underset{j=1}{\sum}}(\lambda_j + 1/2)}}{\overset{n}{\underset{j=1}{\prod}} 2^{2\lambda_j}\Gamma(\lambda_j + 1/2)}
             \right.
    \end{align*}
    \begin{align*}
        & \times \left.  \int_{(0,\infty)^n} f(y) \prod_{j=1}^n y_j^{2\lambda_j} dy \right)t^{\beta-1} dt, \quad x \in (0,\infty)^n,
    \end{align*}
    where $f \in C_c^\infty((0,\infty)^n)$.\\

    Let $f \in C_c^\infty((0,\infty)^n)$. We can write
    \begin{equation}\label{C2}
      \Delta_{\lambda_1, \dots , \lambda_n}^{-\beta}f(x) = \int_{(0,\infty)^n} f(y) K_\beta^{\lambda_1, \dots, \lambda_n}(x,y) \prod_{j=1}^n y_j^{2\lambda_j} dy,
      \quad x \in (0,\infty)^n,
    \end{equation}
    being, for each $x,y \in (0,\infty)^n,$ $x\neq y$,
    \begin{align*}
      K_\beta^{\lambda_1, \dots, \lambda_n}(x,y)
        & = \frac{1}{\Gamma(\beta)} \int_0^\infty \left( \prod_{j=1}^n W_t^{\lambda_j}(x_j,y_j) - \chi_{(1,\infty)}(t) \frac{t^{-\overset{n}{\underset{j=1}{\sum}}(\lambda_j + 1/2)}}{\overset{n}{\underset{j=1}{\prod}} 2^{2\lambda_j}\Gamma(\lambda_j + 1/2)}
           \right)t^{\beta-1} dt.
    \end{align*}

    In order to prove the integral representation \eqref{C2} for $\Delta_{\lambda_1, \dots , \lambda_n}^{-\beta}$ it is sufficient to show that, for every $x \in (0,\infty)^n,$
    \begin{align*}
      \int_{(0,\infty)^n} \lvert f(y) \rvert  \int_0^\infty & \left \lvert \prod_{j=1}^n W_t^{\lambda_j}(x_j,y_j) - \chi_{(1,\infty)}(t)
         \frac{t^{-\overset{n}{\underset{j=1}{\sum}}(\lambda_j + 1/2)}}{\overset{n}{\underset{j=1}
         {\prod}} 2^{2\lambda_j}\Gamma(\lambda_j + 1/2)} \right \rvert t^{\beta-1} dt
         \prod_{j=1}^n y_j^{2\lambda_j} dy < \infty.
    \end{align*}
    Indeed, let $x=(x_1, \dots, x_n) \in (0,\infty)^n$. According to \eqref{E1} and \eqref{E2} and denoting $K = supp f$, we have that
    \begin{align*}
       \int_{(0,\infty)^n} \lvert f(y) \rvert \int_0^1 \prod_{j=1}^n & W_t^{\lambda_j}(x_j,y_j) t^{\beta-1}  dt  \prod_{j=1}^n y_j^{2\lambda_j} dy \\
        \leq &  C \int_K \int_0^1 t^{\beta - 1 -n/2}e^{- \overset{n}{\underset{j=1}{\sum}}(x_j-y_j)^2/4t} dt dy\\
        \leq  &   C \int_K \frac{1}{\lvert x-y \rvert^{n-2\beta} } dy < \infty,
    \end{align*}
    because $\beta >0$. In the last inequality we have used \cite[Lemma 1.1]{ST}.\\

    On the other hand, by using again \eqref{E1}, \eqref{E2} and \eqref{E3} it follows
    \begin{align}\label{Z}
       \Big \lvert W_t^\lambda(x,y) - &\frac{t^{-\lambda-1/2}}{2^{2\lambda}\Gamma(\lambda+1/2)} \Big \rvert
         = \Big \lvert \frac{1}{(2t)^{\lambda+1/2}} \left( \frac{xy}{2t}\right)^{-\lambda+1/2} I_{\lambda-1/2}\left(\frac{xy}{2t}\right)e^{-\frac{x^2+y^2}{4t}} - \frac{t^{-\lambda-1/2}}{2^{2\lambda}\Gamma(\lambda+1/2)} \Big  \rvert\nonumber\\
         \leq & \frac{1}{(2t)^{\lambda+1/2}} e^{-\frac{x^2+y^2}{4t}} \Big \lvert \left( \frac{xy}{2t}\right)^{-\lambda+1/2} I_{\lambda-1/2}\left(\frac{xy}{2t}\right) - \frac{1}{2^{\lambda-1/2}\Gamma(\lambda+1/2)} \Big  \rvert\nonumber\\
       & + \frac{t^{-\lambda-1/2}}{2^{2\lambda}\Gamma(\lambda+1/2)} \Big \lvert e^{-\frac{x^2+y^2}{4t}} -1 \Big \rvert\nonumber\\
       \leq & \frac{xy}{(2t)^{\lambda+3/2}} e^{-\frac{x^2+y^2}{4t}}  \sup_{z \in (0,\frac{xy}{2t})} \Big \lvert \frac{d}{dz} \left( z^{-\lambda+1/2} I_{\lambda-1/2}(z) \right) \Big \rvert
         + C \frac{x^2+y^2}{t^{\lambda+3/2}}\nonumber\\
         \leq & C \left( \frac{(xy)^2}{t^{\lambda+5/2}} e^{-\frac{(x-y)^2}{4t}}  + \frac{x^2+y^2}{t^{\lambda+3/2}} \right)\nonumber\\
         \leq & C \frac{x^2+y^2}{t^{\lambda+3/2}} \left( \frac{xy}{t} +1 \right), \quad t,x,y \in (0,\infty).
     \end{align}
     Then, by \eqref{E1} we get
    \begin{align*}
       &\int_{(0,\infty)^n} \lvert f(y) \rvert  \int_1^\infty   \left \lvert \prod_{j=1}^n W_t^{\lambda_j}(x_j,y_j)
        -\frac{t^{-\overset{n}{\underset{j=1}{\sum}}(\lambda_j + 1/2)}}{\overset{n}{\underset{j=1} {\prod}} 2^{2\lambda_j}\Gamma(\lambda_j + 1/2)} \right
        \rvert t^{\beta-1} dt  \prod_{j=1}^n y_j^{2\lambda_j} dy\\
      & \leq C \sum_{i=1}^n \int_K \int_1^\infty \prod_{j=1}^{i-1} \frac{t^{-(\lambda_j+1/2)}}{2^{2\lambda_j}\Gamma(\lambda_j +1/2)}
        \left| W_t^{\lambda_i}(x_i,y_i) - \frac{t^{-(\lambda_i+1/2)}}{2^{2\lambda_i}\Gamma(\lambda_i +1/2)} \right|\prod_{j=i+1}^n W_t^{\lambda_j}(x_j,y_j)t^{\beta-1} dt dy\\
      &\leq C \int_K \int_1^\infty t^{- \overset{n}{\underset{j=1}{\sum}}(\lambda_j +1/2)-2+\beta} dt dy<\infty ,
    \end{align*}
    because $\beta < \overset{n}{\underset{j=1}{\sum}} (\lambda_j+1/2)+1$.\\

    In particular, we have
    $$\Delta_{\lambda_1, \dots, \lambda_n}^{-1/2}f(x)
      = \int_{(0,\infty)^n} f(y) K_{1/2}^{\lambda_1, \dots, \lambda_n}(x,y) \prod_{j=1}^n y_j^{2\lambda_j} dy, \quad x \in (0,\infty). $$

    \begin{Rm}
      Note that
      if $0 < \beta < \overset{n}{\underset{j=1}{\sum}} (\lambda_j+1/2)$, for every $f \in C_c^\infty((0,\infty)^n)$,
      $$\int_{(0,\infty)^n} \lvert f(y) \rvert \int_0^\infty \prod_{j=1}^n W_t^{\lambda_j}(x_j,y_j) t^{\beta-1} dt \prod_{j=1}^n y_j^{2\lambda_j} dy <\infty,
        \quad x \in (0,\infty)^n,$$
      and we can define the function
      $$\left( \Delta_{\lambda_1, \dots, \lambda_n} \right)^{-\beta} f(x)
        = \int_{(0,\infty)^n}  \frac{ f(y)}{\Gamma(\beta)} \int_0^\infty \prod_{j=1}^n W_t^{\lambda_j}(x_j,y_j) t^{\beta-1} dt \prod_{j=1}^n y_j^{2\lambda_j} dy,
        \quad x \in (0,\infty)^n,$$
      when $f \in C_c^\infty((0,\infty)^n)$. Then, if $0 < \beta < \overset{n}{\underset{j=1}{\sum}} (\lambda_j+1/2)$ and $f \in C_c^\infty((0,\infty)^n)$, for each $x \in (0,\infty)^n,$
      \begin{align*}
        \Delta_{\lambda_1, \dots, \lambda_n}^{-\beta}f(x) - \left( \Delta_{\lambda_1, \dots, \lambda_n} \right)^{-\beta} f(x)
         =&  \frac{-1}{\Gamma(\beta)} \int_{(0,\infty)^n}  f(y) \int_1^\infty \prod_{j=1}^n \frac{t^{-(\lambda_j + 1/2)}}{2^{2\lambda_j}\Gamma(\lambda_j + 1/2)}
          t^{\beta-1} dt  \prod_{j=1}^n y_j^{2\lambda_j} dy
      \end{align*}
      and, for every $i=1, \dots, n$,
      $$\frac{\partial}{\partial x_i} \Delta_{\lambda_1, \dots, \lambda_n}^{-\beta} f(x)
         = \frac{\partial}{\partial x_i} \left( \Delta_{\lambda_1, \dots, \lambda_n } \right)^{-\beta} f(x),$$
      provided that this derivative exists.
    \end{Rm}

    For every $0 < \beta < n/2$, the fractional power $\Delta^{-\beta}$ is defined on $C_c^\infty(\mathbb{R}^n)$ by
    \begin{align*}
      \Delta^{-\beta} f(x)  = &\frac{1}{\Gamma(\beta)} \int_0^\infty t^{\beta-1} \frac{1}{(2\sqrt{\pi})^n} \int_{\mathbb{R}^n} \frac{e^{- \lvert x-y \rvert^2/4t}}{t^{n/2}} f(y) dy dt\\
                            = &\int_{\mathbb{R}^n} f(y) \frac{1}{\Gamma(\beta)(2\sqrt{\pi})^n} \int_0^\infty t^{\beta-1-n/2}e^{- \lvert x-y \rvert^2/4t} dt dy\\
                            = & \frac{\Gamma(n/2-\beta)}{\pi^{n/2}4^\beta \Gamma(\beta)} \int_{\mathbb{R}^n} \frac{f(y)}{\lvert x - y \rvert^{n-2\beta}} dy,
                           \quad x \in \mathbb{R}^n \text{ and } f \in C_c^\infty(\mathbb{R}^n).
    \end{align*}

    Note that all the above integrals are absolutely convergent. In particular, since $n>1$, we have that, for every $f \in C_c^\infty(\mathbb{R}^n)$,
    $$\Delta^{-1/2}f(x) = \frac{1}{2^n \pi^{\frac{n+1}{2}}} \int_{\mathbb{R}^n} f(y) \int_0^\infty \frac{e^{- \lvert x-y \rvert^2/4t}}{t^{\frac{n+1}{2}}} dt dy, \quad x \in \mathbb{R}^n.$$

    A crucial result to prove Theorem~\ref{Riesz} is the following one.

    \begin{Prop}\label{Proposition 4.1}
      Let $f \in C_c^\infty((0,\infty)^n)$. Assume that $\lambda_j>-1/2$, $j=1, \dots, n$. Then, for every $i=1, \dots, n$, it has
      \begin{align}\label{C3}
         \frac{\partial}{\partial x_i}& \left( \Delta_{\lambda_1, \dots, \lambda_n}^{-1/2} f(x) -
             \prod_{j=1}^n x_j^{-\lambda_j} \Delta^{-1/2}\big(\prod_{j=1}^n y_j^{\lambda_j} f\big)(x) \right) \\
        & = \int_{(0,\infty)^n} f(y) \left( R_i^{\lambda_1, \dots, \lambda_n}(x,y) - \mathcal{R}_i^{\lambda_1, \dots, \lambda_n}(x,y)  \right)
            \prod_{j=1}^n y_j^{2\lambda_j} dy, \quad  a.e. \ x \in (0,\infty)^n,\nonumber
      \end{align}
       where
      $$\mathcal{R}_i^{\lambda_1, \dots, \lambda_n}(x,y) = \frac{1}{2^n \pi^{\frac{n+1}{2}}} \frac{\partial}{\partial x_i}
        \left( \int_0^\infty \frac{e^{- \lvert x-y \rvert^2/4t}}{t^{\frac{n+1}{2}}} dt \prod_{j=1}^n (x_j y_j)^{-\lambda_j}\right),
       \ x, y \in \mathbb{R}^n, \ x \neq y,$$
      and
      $$R_i^{\lambda_1, \dots, \lambda_n}(x,y) = \frac{1}{\sqrt{\pi}} \int_0^\infty \frac{\partial}{\partial x_i} W_t^{\lambda_i}(x_i,y_i)
        \prod_{j=1, \ j \neq i}^{n} W_t^{\lambda_j}(x_j,y_j) \frac{dt}{\sqrt{t}}, \ x, y \in (0,\infty)^n, \ x \neq y.$$
      Moreover, the integral in \eqref{C3} is absolutely convergent.
    \end{Prop}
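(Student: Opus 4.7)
The first step is to combine the integral representation \eqref{C2} of $\Delta_{\lambda_1,\dots,\lambda_n}^{-1/2}f$ with the explicit formula for $\Delta^{-1/2}$ recalled just above the proposition, rewriting the difference on the left-hand side of \eqref{C3} as
$$\int_{(0,\infty)^n} f(y)\,D(x,y)\prod_{j=1}^n y_j^{2\lambda_j}\,dy,\qquad D(x,y):=K_{1/2}^{\lambda_1,\dots,\lambda_n}(x,y)-\prod_{j=1}^n(x_jy_j)^{-\lambda_j}\,\frac{1}{2^n\pi^{(n+1)/2}}\int_0^\infty \frac{e^{-|x-y|^2/4t}}{t^{(n+1)/2}}\,dt.$$
I would then show, for almost every $x$, that $\partial/\partial x_i$ can be passed through the outer $y$-integral and through the inner $t$-integrals hidden in $K_{1/2}^{\lambda_1,\dots,\lambda_n}(x,y)$ and in $\tilde K(x,y):=\frac{1}{2^n\pi^{(n+1)/2}}\int_0^\infty e^{-|x-y|^2/4t}t^{-(n+1)/2}\,dt$, so that $\partial_{x_i}D(x,y)=R_i^{\lambda_1,\dots,\lambda_n}(x,y)-\mathcal{R}_i^{\lambda_1,\dots,\lambda_n}(x,y)$. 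Both interchanges will be justified by dominated-convergence estimates uniform in $x$ on compact subsets of $(0,\infty)^n$.

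For the outer interchange, I would split $(0,\infty)^n$ into the local region $L(x):=\prod_{j=1}^n(x_j/2,2x_j)$ and its complement. Outside $L(x)$ at least one coordinate $y_j$ lies far from $x_j$, and using the Gaussian decay estimates already collected in Sections~2 and~3 (namely \eqref{A0}, \eqref{A3}, \eqref{A6} for $W_t^{\lambda_j}$ and \eqref{B10} for $\partial_t W_t^{\lambda_j}$, whose analogues for $\partial_{x_i}W_t^{\lambda_i}$ follow by the same arguments together with \eqref{E3}), both $R_i^{\lambda_1,\dots,\lambda_n}(x,y)$ and $\mathcal{R}_i^{\lambda_1,\dots,\lambda_n}(x,y)$ become separately absolutely integrable against $\prod_j y_j^{2\lambda_j}$ on this non-local region, uniformly in $x$ locally.

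The main obstacle is inside $L(x)$, where each of $R_i^{\lambda_1,\dots,\lambda_n}(x,y)$ and $\mathcal{R}_i^{\lambda_1,\dots,\lambda_n}(x,y)$ behaves like $|x-y|^{-n}$, a non-integrable singularity; only their difference is integrable. The required cancellation comes from the asymptotic \eqref{E2}. For $x_jy_j\geq t$ it gives
$$W_t^{\lambda_j}(x_j,y_j) = (x_jy_j)^{-\lambda_j}\mathbb{W}_t(x_j,y_j) + E_t^{\lambda_j}(x_j,y_j),\qquad |E_t^{\lambda_j}(x_j,y_j)|\leq C(x_jy_j)^{-\lambda_j-1}\sqrt{t}\,e^{-(x_j-y_j)^2/8t},$$
and combined with \eqref{E3} it yields a parallel decomposition of $\partial_{x_i}W_t^{\lambda_i}(x_i,y_i)$ as $(x_iy_i)^{-\lambda_i}\partial_{x_i}\mathbb{W}_t(x_i,y_i)$ plus a controlled error (the $\partial_{x_i}$ analogue of \cite[Lemma~8]{BHNV}). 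Expanding the products in $R_i^{\lambda_1,\dots,\lambda_n}$ and matching them against $\mathcal{R}_i^{\lambda_1,\dots,\lambda_n}=\partial_{x_i}\bigl(\prod_j(x_jy_j)^{-\lambda_j}\tilde K(x,y)\bigr)$, the leading singular term of $R_i^{\lambda_1,\dots,\lambda_n}$ and the contribution $-\lambda_i x_i^{-1}\prod_j(x_jy_j)^{-\lambda_j}\tilde K(x,y)$ produced by differentiating the prefactor both cancel against the corresponding parts of $\mathcal{R}_i^{\lambda_1,\dots,\lambda_n}$. After integration in $t$ the remainder is bounded on $L(x)$ by a kernel of order $|x-y|^{-(n-1)}$ times factors that are bounded on $L(x)$, and hence is absolutely integrable since $n\geq 2$. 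The complementary regime $x_jy_j<t$ is handled using \eqref{E1} and the Gaussian factor $e^{-(x_j^2+y_j^2)/8t}$, following the pattern of \eqref{B11}--\eqref{B13} and \eqref{B14}--\eqref{B15}.

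Once these absolute-convergence bounds are in place, the classical differentiation-under-the-integral theorem yields both interchanges and therefore \eqref{C3}. The hard part is thus the bookkeeping inside $L(x)$ that extracts the cancellation of the two critical singularities; tracking correctly the additional $-\lambda_i x_i^{-1}$ term produced by $\partial_{x_i}\prod_j x_j^{-\lambda_j}$ is essential for this cancellation.
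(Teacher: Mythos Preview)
Your overall strategy is sound and rests on the same key idea as the paper: the asymptotic \eqref{E2} makes the Bessel heat kernel match $(x_jy_j)^{-\lambda_j}\mathbb{W}_t(x_j,y_j)$ to leading order, so that in $R_i^{\lambda_1,\dots,\lambda_n}-\mathcal{R}_i^{\lambda_1,\dots,\lambda_n}$ the $|x-y|^{-n}$ singularities cancel and a locally integrable $|x-y|^{-(n-1)}$ remainder survives, which then legitimises differentiation under the integral. Two remarks on the execution, however.

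First, the local/non-local split of $(0,\infty)^n$ in the $y$-variable is unnecessary here and does not match the paper's organisation. Since $f\in C_c^\infty((0,\infty)^n)$ and you work with $x$ in a fixed compact $\Omega\subset(0,\infty)^n$, all products $x_jy_j$ are uniformly bounded above and below on $K\times\Omega$; one can therefore pick $0<a<1<b$ with $x_jy_j/t\ge 1$ for $t<a$ and $x_jy_j/t\le 1$ for $t>b$, uniformly. The paper exploits exactly this and splits only in $t$ (into $(0,a)$, $(a,1)$, $(1,b)$, $(b,\infty)$), treating each piece by the appropriate asymptotic \eqref{E1} or \eqref{E2}. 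Your $y$-split would in practice collapse to this $t$-split once you try to use \eqref{E2}, so you may as well adopt it from the start.

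Second, your description of the cancellation is slightly off. The term $-\lambda_i x_i^{-1}\prod_j(x_jy_j)^{-\lambda_j}\tilde K(x,y)$ in $\mathcal{R}_i$ has singularity only of order $|x-y|^{-(n-1)}$ and is therefore already integrable on its own; it does not need to cancel against anything. Likewise, the estimate from \cite[Lemma~3]{BHNV} compares $\partial_{x_i}W_t^{\lambda_i}$ with $(x_iy_i)^{-\lambda_i}\partial_{x_i}\mathbb{W}_t$, not with $\partial_{x_i}\bigl[(x_iy_i)^{-\lambda_i}\mathbb{W}_t\bigr]$, so the extra $\lambda_i x_i^{-1}(x_iy_i)^{-\lambda_i}\mathbb{W}_t$ term on the $R_i$ side is again separately $O(|x-y|^{-(n-1)})$ after the $t$-integration. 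The only genuine cancellation is between the two occurrences of $\prod_j(x_jy_j)^{-\lambda_j}\cdot\frac{1}{\sqrt{\pi}}\int_0^\infty\partial_{x_i}\mathbb{W}_t(x_i,y_i)\prod_{j\ne i}\mathbb{W}_t(x_j,y_j)\,t^{-1/2}dt$, which match exactly. Once you phrase it this way, your one-term remainder bound suffices, and the paper's more elaborate multi-term expansion (with the functions $h_{k_1,\dots,k_n}$ and the integration-by-parts manoeuvre) is one concrete way of making the dominated-convergence argument fully rigorous, but not the only one.
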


    \begin{proof}
      We are going to prove \eqref{C3} for $i=1$. We denote $K=supp f$. Suppose that $\Omega$ is a compact subset of $(0,\infty)^n$.
      There exist $0<a<1$ and $b>1$, such that for every $y \in K$ and $x \in \Omega$, $\frac{x_j y_j}{t} \geq 1$, when $0<t<a$, and
      $\frac{x_j y_j}{t} \leq 1$, when $b<t<\infty$, for every $j=1,...,n$. We can write, for each $x,y \in (0,\infty)^n$,
      \begin{align}\label{C4}
        K&_{1/2}^{\lambda_1, \dots, \lambda_n}(x,y) - \frac{\overset{n}{\underset{j=1}{\prod}} (x_j y_j)^{-\lambda_j}}{2^n \pi^{\frac{n+1}{2}}}
                \int_0^\infty \frac{e^{- \lvert x-y \rvert^2/4t}}{t^{\frac{n+1}{2}}} dt    \\
       = &   \frac{1}{\sqrt{\pi}}\left\{ \int_0^a  \left(\prod_{j=1}^n W_t^{\lambda_j}(x_j,y_j) - \prod_{j=1}^n (x_jy_j)^{-\lambda_j}
                \frac{e^{-(x_j-y_j)^2/4t}}{2\sqrt{\pi t}}\right)  \frac{dt}{\sqrt{t}}  \right. \nonumber \\
        & + \int_a^1  \left(\prod_{j=1}^n W_t^{\lambda_j}(x_j,y_j) - \prod_{j=1}^n (x_jy_j)^{-\lambda_j}
                \frac{e^{-(x_j-y_j)^2/4t}}{2\sqrt{\pi t}}\right)  \frac{dt}{\sqrt{t}}  \nonumber \\
        &  \left. + \int_1^b \left( \prod_{j=1}^n W_t^{\lambda_j}(x_j,y_j)
                                               - \frac{t^{-\overset{n}{\underset{j=1}{\sum}}(\lambda_j + 1/2)}}{\overset{n}{\underset{j=1}{\prod}} 2^{2\lambda_j}\Gamma(\lambda_j + 1/2)}
                                               - \frac{\overset{n}{\underset{j=1}{\prod}} (x_j y_j)^{-\lambda_j}}{2^n \pi^{\frac{n}{2}}}
                                                 \frac{e^{- \lvert x-y \rvert^2/4t}}{t^{\frac{n}{2}}} \right) \frac{dt}{\sqrt{t}}\right.   \nonumber\\
        &  \left. + \int_b^\infty \left( \prod_{j=1}^n W_t^{\lambda_j}(x_j,y_j)
                                               - \frac{t^{-\overset{n}{\underset{j=1}{\sum}}(\lambda_j + 1/2)}}{\overset{n}{\underset{j=1}{\prod}} 2^{2\lambda_j}\Gamma(\lambda_j + 1/2)}
                                               - \frac{\overset{n}{\underset{j=1}{\prod}} (x_j y_j)^{-\lambda_j}}{2^n \pi^{\frac{n}{2}}}
                                                 \frac{e^{- \lvert x-y \rvert^2/4t}}{t^{\frac{n}{2}}}\right) \frac{dt}{\sqrt{t}}\right\}. \nonumber
      \end{align}
       According to \eqref{E2}, we have for every $x,y \in (0,\infty)^n$ and $t>0$,
      \begin{align*}
        \prod_{j=1}^n W_t^{\lambda_j}(x_j,y_j)
          & = \prod_{j=1}^n \frac{(x_j y_j)^{-\lambda_j}}{\sqrt{2t}} \left( \frac{x_j y_j}{2t} \right)^{1/2}
              I_{\lambda_j -1/2}\left( \frac{x_j y_j}{2t} \right) e^{-(x_j^2 + y_j^2)/4t}\\
          & = \prod_{j=1}^n \frac{(x_j y_j)^{-\lambda_j}}{2 \sqrt{\pi t}} e^{-\frac{(x_j-y_j)^2}{4t}}
              \left( \sum_{k=0}^n (-1)^k [\lambda_j-1/2,k] \left( \frac{t}{x_jy_j} \right)^k + f_n^{\lambda_j-1/2} \left( \frac{x_jy_j}{2t} \right) \right),
      \end{align*}
      where, for every $\nu>-1$, $f_n^\nu$ is a $C^\infty(0,\infty)$-function  such that $f_n^\nu(z)=O\left( z^{-(n+1)}\right)$, as $z \rightarrow \infty$. Then, for each $x,y \in (0,\infty)^n$ and $t>0$,
      \begin{align}\label{34}
        \prod_{j=1}^n W_t^{\lambda_j}(x_j,y_j) = &  \prod_{j=1}^n \frac{(x_j y_j)^{-\lambda_j}}{2 \sqrt{\pi t}} e^{-\frac{(x_j-y_j)^2}{4t}} \\
                                                 & \times  \left( 1 + \sum_{\tiny{\begin{array}{c} k_1, \dots, k_n \in \mathbb{N}\\ k_1, \dots, k_n \leq n \\ (k_1, \dots, k_n) \neq 0 \end{array}}}
                                                 c_{k_1, \dots, k_n} \frac{t^{k_1+ \dots + k_n}}{\overset{n}{\underset{j=1}{\prod}} (x_jy_j)^{k_j}} + g_n(x,y,t)\right), \nonumber
      \end{align}
      where $c_{k_1, \dots, k_n} \in \mathbb{R}$, $k_1, \dots, k_n \in \mathbb{N}$, $k_1, \dots, k_n \leq n$, and $(k_1,\cdots, k_n)\neq 0$, and $g_n \in C^\infty((0,\infty)^n \times (0,\infty)^n \times (0,\infty))$
      and
      $$\lvert g_n(x,y,t) \lvert \leq C t^{n+1}, \quad  t \in (0,a), \ x \in \Omega \text{ and } y \in K.$$
      Let $\nu>-1$. By \eqref{E2} and \eqref{E3} we get, for each $z \in (0,\infty)$ ,
       \begin{align}\label{35}
        \frac{d}{dz}\big( e^{-z}z^{-\nu}&I_{\nu}(z) \big)  =  e^{-z}z^{-\nu-1/2} \left( \sqrt{z} I_{\nu+1} - \sqrt{z}I_\nu(z) \right) \nonumber\\
                                                            & =  \frac{z^{-\nu-1/2}}{\sqrt{2\pi}} \left( \sum_{k=0}^{n+1} (-1)^k ([\nu+1,k]-[\nu,k])
                                                            (2z)^{-k} + f_{n+1}^{\nu+1}(z) - f_{n+1}^\nu(z)\right) \nonumber\\
                                                            & =- \frac{z^{-\nu-1/2}}{\sqrt{2\pi}} \left( \sum_{k=0}^n (-1)^k ([\nu+1,k+1]-[\nu,k+1])
                                                            (2z)^{-k-1} + f_{n+1}^{\nu+1}(z) - f_{n+1}^\nu(z)\right),
      \end{align}
      and
      \begin{align}\label{36}
        \frac{d}{dz}\big( e^{-z}z^{-\nu}&I_{\nu}(z) \big)  =  \frac{d}{dz}\left(  \frac{z^{-\nu-1/2}}{\sqrt{2\pi}}
                                                                  \left(  \sum_{k=0}^n (-1)^k [\nu,k](2z)^{-k} + f_{n}^{\nu}(z) \right)\right) \nonumber \\
                                                            & = - \frac{z^{-\nu-1/2}}{\sqrt{2\pi}}
                                                            \left(  \sum_{k=0}^n (-1)^k [\nu,k](2\nu+2k+1)(2z)^{-k-1} + \frac{\nu+1/2}{z} f_{n}^{\nu}(z) - \frac{d}{dz}f_n^\nu(z) \right).
      \end{align}
      Moreover, $[\nu+1,k+1]-[\nu,k+1] = [\nu,k](2\nu+2k+1)$, for $k \in \mathbb{N}$, $k \geq 1$. Indeed, let $k \geq 1$. We have that
      \begin{align*}
        [\nu&+1, k+1]-[\nu,k+1]  = \frac{1}{2^{2k+2} \Gamma(k+2)} \left\{\overset{k}{\underset{j=0}{\prod}}(4(\nu+1)^2-(2j+1)^2)-\overset{k}{\underset{j=0}{\prod}}(4\nu^2-(2j+1)^2) \right\} \\
               & =\frac{1}{2^{2k+2} \Gamma(k+2)} \left\{ \overset{k}{\underset{j=0}{\prod}}(2\nu+2(j+1)+1)(2\nu-2(j-1)-1) - \overset{k}{\underset{j=0}{\prod}}(2\nu+2j+1)(2\nu-2j-1) \right\} \\
               & =\frac{(2\nu+2k+3)-(2\nu-2k-1)}{4 (k+1) 2^{2k} \Gamma(k+1)} \left\{ \overset{k-1}{\underset{j=0}{\prod}}(2\nu+2j+1)(2\nu-2j-1) \right\} (2\nu+2k+1) \\
               & =[\nu,k](2\nu+2k+1).
      \end{align*}
      Then, from \eqref{35} and \eqref{36} we deduce that
      $$\frac{d}{dz} f_n^\nu = \frac{\nu+1/2}{z}f_n^\nu - f_{n+1}^{\nu+1}+f_{n+1}^{\nu}.$$
      Hence,
      \begin{equation}\label{37}
        \frac{d}{dz} f_n^\nu(z)=O\left( \frac{1}{z^{n+2}} \right), \quad \text{as } z \rightarrow \infty,
      \end{equation}
      and we conclude that
      \begin{equation}\label{38}
        \left \lvert \frac{\partial}{\partial x_1}g_n(x,y,t) \right\rvert \leq C t^{n+1}, \quad t \in (0,a), \ x \in \Omega \text{ and } y \in K.
      \end{equation}
 If $k \in \mathbb{N}$, according to \cite[Lemma 1.1]{ST} we get
      \begin{align}\label{39}
        \int_0^a e^{\lvert x-y \rvert^2/4t} t^{k-\frac{n+1}{2}} dt & \leq \int_0^a e^{\lvert x-y \rvert^2/4t} t^{-\frac{n-1}{2}} dt
                                                                    \leq C \frac{1}{ \lvert x-y \rvert^{n-2}}, \quad x\neq y, \ x,y \in (0,\infty)^n.
      \end{align}

      Suppose that $k_1, \dots, k_n \in \mathbb{N}$ such that $k_1 + \dots + k_n \leq n$ and $k_i \geq 1$, for some $i=1, \dots, n$. From \eqref{39} it deduces that
      the functions
      $$h_{k_1, \dots, k_n}(z)=\int_0^a \frac{e^{-\lvert z \rvert^2/4t}}{t^{\frac{n+1}{2} - \overset{n}{\underset{j=1}{\sum}}k_j}} dt \quad , \quad
        H_{k_1, \dots, k_n}(z)=\int_0^a \frac{e^{-\lvert z \rvert^2/4t}}{t^{\frac{n}{2}+1 - \overset{n}{\underset{j=1}{\sum}}k_j}} dt, \quad z \in \mathbb{R}^n \setminus \{0\}$$
      are in $L^1(\Lambda)$ for every compact subset $\Lambda \subset \mathbb{R}^n$. Moreover,
      $$\frac{\partial}{\partial x_1}h_{k_1, \dots, k_n}(x)=\int_0^a \frac{\partial}{\partial x_1}e^{-\lvert x \rvert^2/4t}t^{-\frac{n+1}{2} + \overset{n}{\underset{j=1}{\sum}}k_j} dt, \quad x \in \mathbb{R}^n \setminus \{0\}.$$
      Since $f \in C_c^\infty((0,\infty)^n)$ by defining $f(y)=0$, $y \in \mathbb{R}^n \setminus (0,\infty)^n$, the function
      $$G_{k_1, \dots, k_n}(x)= \int_{\mathbb{R}^n} \prod_{j=1}^n (x_jy_j)^{-\lambda_j-k_j}f(y)h_{k_1, \dots, k_n}(x-y) \prod_{j=1}^n y_j^{2\lambda_j}dy, \quad x \in (0,\infty)^n,$$
      is derivable with respect to $x_1$ on $(0,\infty)^n$, and
      \begin{align*}
        &\frac{\partial}{\partial x_1}  G_{k_1, \dots, k_n}(x)
         = -(\lambda_1 + k_1) \int_{\mathbb{R}^n}x_1^{-\lambda_1 - k_1 -1}y_1^{-\lambda_1-k_1} \prod_{j=2}^n (x_jy_j)^{-\lambda_j-k_j}f(y)h_{k_1, \dots,k_n}(x-y) \prod_{j=1}^n y_j^{2\lambda_j}dy \\
         & + \int_{\mathbb{R}^n}x_1^{-\lambda_1 - k_1} \prod_{j=2}^n (x_j(x_j-y_j))^{-\lambda_j-k_j}\frac{\partial}{\partial x_1}\left[(x_1-y_1)^{\lambda_1-k_1}f(x-y) \right] h_{k_1, \dots,k_n}(y) \prod_{j=2}^n (x_j-y_j)^{2\lambda_j}dy \\
         = & -(\lambda_1 + k_1) \int_{\mathbb{R}^n}x_1^{-\lambda_1 - k_1 -1}y_1^{-\lambda_1-k_1} \prod_{j=2}^n (x_jy_j)^{-\lambda_j-k_j}f(y)h_{k_1, \dots,k_n}(x-y) \prod_{j=1}^n y_j^{2\lambda_j}dy \\
         &- \int_{\mathbb{R}^n}x_1^{-\lambda_1 - k_1} \prod_{j=2}^n (x_j(x_j-y_j))^{-\lambda_j-k_j}\frac{\partial}{\partial y_1}\left[(x_1-y_1)^{\lambda_1-k_1}f(x-y) \right] h_{k_1, \dots,k_n}(y) \prod_{j=2}^n (x_j-y_j)^{2\lambda_j}dy \\
          = & -(\lambda_1 + k_1) \int_{\mathbb{R}^n}x_1^{-\lambda_1 - k_1 -1}y_1^{-\lambda_1-k_1} \prod_{j=2}^n (x_jy_j)^{-\lambda_j-k_j}f(y)h_{k_1, \dots,k_n}(x-y) \prod_{j=1}^n y_j^{2\lambda_j}dy \\
&-\int_{\mathbb{R}^{n-1}} x_1^{-\lambda_1-k_1}\prod_{j=2}^n \frac{(x_j-y_j)^{2\lambda_j}}{(x_j(x_j-y_j))^{\lambda_j+k_j}}\int_{-\infty}^\infty \frac{\partial}{\partial y_1}\left[(x_1-y_1)^{\lambda_1-k_1}f(x-y)\right] h_{k_1,\ldots,k_n}(y) dy_1\ldots dy_n\\
=& -(\lambda_1+k_1) \int_{\mathbb{R}^n} x_1^{-\lambda-k_1-1}y_1^{-\lambda_1-k_1}\prod_{j=2}^n (x_jy_j)^{-\lambda_j-k_j}f(y) h_{k_1,\ldots,k_n}(x-y) \prod_{j=1}^n y_j^{2\lambda_j}dy\\
& +  \int_{\mathbb{R}^{n-1}}  \lim_{\varepsilon \rightarrow 0} \left\{ -(x_1-y_1)^{\lambda_1-k_1}f(x-y) h_{k_1,\ldots,k_n}(y) \right]_{x_1+\varepsilon}^{\infty}
 -  \left.(x_1-y_1)^{\lambda_1-k_1}f(x-y) h_{k_1,\ldots, k_n}(y) \right]_{-\infty}^{x_1-\varepsilon} \\
 & + \int_{\mathbb{R}\setminus(x_1-\varepsilon,x_1+\varepsilon)} (x_1-y_1)^{\lambda_1-k_1}f(x-y) \frac{\partial}{\partial y_1}\left.h_{k_1,\ldots,k_n}(y) dy_1\right\} x_1^{-\lambda_1-k_1}\prod_{j=2}^n \frac{(x_j-y_j)^{2\lambda_j}}{(x_j(x_j-y_j))^{\lambda_j+k_j}} dy_2 \ldots dy_n\\
 =& -(\lambda_1+k_1)\int_{\mathbb{R}^n} x_1^{-\lambda_1-k_1-1}y_1^{-\lambda_1-k_1}\prod_{j=2}^n (x_jy_j)^{-\lambda_j-k_j}f(y) h_{k_1,\ldots,k_n}(x-y) \prod_{j=1}^n y_j^{2\lambda_j}dy\\
 &+ \int_{\mathbb{R}^n} \prod_{j=1}^n (x_j(x_j-y_j))^{-\lambda_j-k_j}\prod_{j=1}^n(x_j-y_j)^{2\lambda_j}f(x-y) \frac{\partial}{\partial y_1}h_{k_1,\ldots,k_n}(y)dy\\
 =& -(\lambda_1+k_1)\int_{\mathbb{R}^n} x_1^{-\lambda_1-k_1-1}y_1^{-\lambda_1-k_1}\prod_{j=2}^n (x_jy_j)^{-\lambda_j-k_j}f(y) h_{k_1,\ldots,k_n}(x-y) \prod_{j=1}^n y_j^{2\lambda_j}dy\\
   \end{align*}
 \begin{align*}
 &+ \int_{\mathbb{R}^n}\prod_{j=1}^n (x_jy_j)^{-\lambda_j-k_j}f(y) \frac{\partial}{\partial u_1}h_{k_1,\ldots,k_n}(u)_{|u=x-y}\prod_{j=1}^n y_j^{2\lambda_j}dy\\
 =& -(\lambda_1+k_1)\int_{\mathbb{R}^n} x_1^{-\lambda_1-k_1-1}y_1^{-\lambda_1-k_1}\prod_{j=2}^n (x_jy_j)^{-\lambda_j-k_j}f(y) h_{k_1,\ldots,k_n}(x-y) \prod_{j=1}^n y_j^{2\lambda_j}dy\\
 &+ \int_{\mathbb{R}^n}\prod_{j=1}^n (x_jy_j)^{-\lambda_j-k_j}f(y)\frac{\partial}{\partial x_1}h_{k_1,\ldots,k_n}(x-y)\prod_{j=1}^n y_j^{2\lambda_j}dy\\
 =& \int_{\mathbb{R}^n}\frac{\partial}{\partial x_1}\left[\prod_{j=1}^n (x_jy_j)^{-\lambda_j-k_j}h_{k_1,\ldots,k_n}(x-y) \right]f(y) \prod_{j=1}^n y_j^{2\lambda_j}dy, \quad x\in (0,\infty)^n.
\end{align*}
Hence, we obtain, for each $x\in (0,\infty)^n,$
\begin{align}\label{40}
&\frac{\partial}{\partial x_1}\Big( \int_{\mathbb{R}^n} f(y) \int_0^a \prod_{j=1}^n \frac{(x_jy_j)^{-\lambda_j}}{2\sqrt{\pi t}} e^{-\frac{(x_j-y_j)^2}{4t}} \sum_{\tiny{\begin{array}{c} k_1,\ldots,k_n \in \mathbb{N}\\
 k_1,\ldots,k_n \leq n \\ (k_1,\ldots,k_n)\neq 0\end{array}}} c_{k_1,\ldots,k_n}\frac{t^{k_1+\ldots+k_n}}{\overset{n}{\underset{j=1}{\prod}} (x_jy_j)^{k_j}}dt\prod_{j=1}^n y_j^{2\lambda_j}dy\Big) \\
=& \int_{\mathbb{R}^n} f(y) \int_0^a \frac{\partial}{\partial x_1}\Big(\prod_{j=1}^n \frac{(x_jy_j)^{-\lambda_j}}{2\sqrt{\pi t}} e^{-\frac{(x_j-y_j)^2}{4t}} \sum_{\tiny{\begin{array}{c} k_1,\ldots,k_n \in \mathbb{N}\\
 k_1,\ldots,k_n \leq n \\ (k_1,\ldots,k_n)\neq 0\end{array}}} c_{k_1,\ldots,k_n}\frac{t^{k_1+\ldots+k_n}}{\overset{n}{\underset{j=1}{\prod}} (x_jy_j)^{k_j}}\Big)\frac{dt}{\sqrt{t}}\prod_{j=1}^n y_j^{2\lambda_j}dy.  \nonumber
\end{align}

Also, by using \eqref{38} we can see that
\begin{align}\label{41}
\frac{\partial}{\partial x_1}\int_{\mathbb{R}^n}& f(y) \int_0^a \prod_{j=1}^n \frac{(x_jy_j)^{-\lambda_j}}{2\sqrt{\pi t}} e^{-\frac{(x_j-y_j)^2}{4t}} g_n(x,y,t) dt\prod_{j=1}^n y_j^{2\lambda_j} dy\\
= &\int_{\mathbb{R}^n} f(y) \int_0^a \frac{\partial}{\partial
x_1}\left(\prod_{j=1}^n \frac{(x_jy_j)^{-\lambda_j}}{2\sqrt{\pi t}}
e^{-\frac{(x_j-y_j)^2}{4t}} g_n(x,y,t)\right) dt\prod_{j=1}^n
y_j^{2\lambda_j} dy, \quad  x\in (0,\infty)^n. \nonumber
\end{align}
 From \eqref{34}, \eqref{40} and \eqref{41} we deduce that
 \begin{align}\label{42}
&\frac{\partial}{\partial x_1}\int_{\mathbb{R}^n} f(y) \int_0^a \left(\prod_{j=1}^n W_t^{\lambda_j} (x_j,y_j) -\prod_{j=1}^n \frac{(x_jy_j)^{-\lambda_j}}{2\sqrt{\pi t}} e^{-\frac{(x_j-y_j)^2}{4t}}\right) \frac{dt}{\sqrt{t}} \prod_{j=1}^n y_j^{2\lambda_j}dy\\
= &\int_{\mathbb{R}^n}f(y) \int_0^a \frac{\partial}{\partial
x_1}\left(\prod_{j=1}^n W_t^{\lambda_j} (x_j,y_j) -\prod_{j=1}^n
\frac{(x_jy_j)^{-\lambda_j}}{2\sqrt{\pi t}}
e^{-\frac{(x_j-y_j)^2}{4t}}\right) \frac{dt}{\sqrt{t}} \prod_{j=1}^n
y_j^{2\lambda_j}dy, \; x\in (0,\infty)^n. \nonumber
\end{align}
By \eqref{Z} it follows that
\begin{align*}
&\left|\prod_{j=1}^n W_t^{\lambda_j}(x_j,y_j) - \frac{t^{-\overset{n}{\underset{j=1}{\sum}}(\lambda_j + 1/2)}}{\overset{n}{\underset{j=1}{\prod}} 2^{2\lambda_j}\Gamma(\lambda_j+1/2)}- \prod_{j=1}^n \frac{(x_jy_j)^{-\lambda_j}}{2\sqrt{\pi t}} e^{-\frac{(x_j-y_j)^2}{4t}}\right| \frac{1}{\sqrt{t}}\\
\leq & \sum_{i=1}^n \frac{1}{\sqrt{t}}\prod_{j=1}^{i-1} W_t^{\lambda_j}(x_j,y_j)\left| W_t^{\lambda_i}(x_i,y_i) - \frac{t^{-\lambda_i - 1/2}}{ 2^{2\lambda_i}\Gamma(\lambda_i+1/2)}\right| \frac{t^{-\overset{n}{\underset{j=i+1}{\sum}}(\lambda_j + 1/2)}}{\overset{n}{\underset{j=i+1}{\prod}} 2^{2\lambda_j}\Gamma(\lambda_j+1/2)} + \frac{C}{t^{(n+1)/2}}\\
\leq& C\left(  \ \frac{1}{t^{\overset{n}{\underset{j=1}{\sum}}(\lambda_j+1/2)+ 3/2}}+ \frac{1}{t^{(n+1)/2}}\right), \quad b <t <\infty, x\in \Omega \text{ and } y \in K.
\end{align*}
Moreover, according to \eqref{E3} we have that
\begin{align*}
&\frac{\partial}{\partial x_1}\left( \prod_{j=1}^n W_t^{\lambda_j}(x_j,y_j) - \frac{t^{-\overset{n}{\underset{j=1}{\sum}}(\lambda_j + 1/2)}}{\overset{n}{\underset{j=1}{\prod}} 2^{2\lambda_j}\Gamma(\lambda_j+1/2)}- \prod_{j=1}^n \frac{(x_jy_j)^{-\lambda_j}}{2\sqrt{\pi t}} e^{-\frac{(x_j-y_j)^2}{4t}}\right)\\
=& \frac{\overset{n}{\underset{{j=2}}{\prod}} W_t^{\lambda_j} (x_j,y_j) }{(2t)^{\lambda_1+1/2}}\left[ \left( \frac{x_1y_1}{2t}\right)^{-\lambda_1+1/2}I_{\lambda_1+1/2}\left( \frac{x_1y_1}{2t}\right)\frac{y_1}{2t}- \frac{x_1}{2t}\left(\frac{x_1y_1}{2t} \right)^{-\lambda_1+1/2}I_{\lambda_1-1/2}\left( \frac{x_1y_1}{2t}\right)\right]e^{-\frac{x_1^2+y_1^2}{4t}}
\\&+ \lambda_1 \frac{x_1^{-\lambda_1-1}y_1^{-\lambda_1}}{2\sqrt{\pi t}} \prod_{j=2}^n (x_jy_j)^{-\lambda_j}\frac{e^{-\frac{(x_j-y_j)^2}{4t}}}{2\sqrt{\pi t}}+ \prod_{j=1}^n (x_jy_j)^{-\lambda_j} \frac{e^{-\frac{|x-y|^2}{4t}}}{(2\sqrt{\pi t})^n}\frac{(x_1-y_1)}{2t}, \quad t>0, x,y \in (0,\infty)^n.
\end{align*}
Then, \eqref{E1} leads to
\begin{align*}
\Big|\frac{\partial}{\partial x_1}\Big( \prod_{j=1}^n W_t^{\lambda_j}(x_j,y_j) &- \frac{t^{-\overset{n}{\underset{j=1}{\sum}}(\lambda_j + 1/2)}}{\overset{n}{\underset{j=1}{\prod}} 2^{2\lambda_j}\Gamma(\lambda_j+1/2)}-  \frac{e^{-\frac{|x-y|^2}{4t}}}{(2\sqrt{\pi t})^n} \prod_{j=1}^n (x_jy_j)^{-\lambda_j}\Big)\Big|\\
\leq& C\left( \frac{1}{t^{\overset{n}{\underset{j=1}{\sum}}(\lambda_j+1/2)+1}} + \frac{1}{t^{n/2}}\right), \quad t>b, x \in \Omega \text{ and } y\in K.
\end{align*}
Hence, for each $  x \in (0,\infty)^n ,$ we can differentiate under the integral sign obtaining
\begin{align}\label{43}
&\frac{\partial}{\partial x_1}\int_{(0,\infty)^n} f(y)   \int_b^\infty \Big(W_t^{\lambda_j}(x_j,y_j) - \frac{t^{-\overset{n}{\underset{j=1}{\sum}}(\lambda_j + 1/2)}}{\overset{n}{\underset{j=1}{\prod}} 2^{2\lambda_j}\Gamma(\lambda_j+1/2)}- \prod_{j=1}^n (x_jy_j)^{-\lambda_j} \frac{e^{-\frac{|x-y|^2}{4t}}}{(2\sqrt{\pi t})^n}\Big) \frac{dt}{\sqrt{t}} dy\\
=& \int_{(0,\infty)^n} f(y) \prod_{j=1}^n y_j^{2\lambda_j}  \int_b^\infty \frac{\partial}{\partial x_1}\Big(W_t^{\lambda_j}(x_j,y_j) - \frac{t^{-\overset{n}{\underset{j=1}{\sum}}(\lambda_j + 1/2)}}{\overset{n}{\underset{j=1}{\prod}} 2^{2\lambda_j}\Gamma(\lambda_j+1/2)}- \prod_{j=1}^n (x_jy_j)^{-\lambda_j} \frac{e^{-\frac{|x-y|^2}{4t}}}{(2\sqrt{\pi t})^n}\Big) \frac{dt}{\sqrt{t}}\prod_{j=1}^n y_j^{2\lambda_j} dy . \nonumber
\end{align}
Finally, it is not hard to see that
\begin{align}\label{44}
&\frac{\partial}{\partial x_1}\int_{(0,\infty)^n} f(y)  \left( \int_a^b \Big(W_t^{\lambda_j}(x_j,y_j) -  \frac{e^{-\frac{|x-y|^2}{4t}}}{(2\sqrt{\pi t})^n}\prod_{j=1}^n (x_jy_j)^{-\lambda_j}\Big)\frac{dt}{\sqrt{t}}-\int_1^b  \frac{t^{-\overset{n}{\underset{j=1}{\sum}}(\lambda_j + 1/2)}}{\overset{n}{\underset{j=1}{\prod}} 2^{2\lambda_j}\Gamma(\lambda_j+1/2)} \frac{dt}{\sqrt{t}}\right)\prod_{j=1}^n y_j^{2\lambda_j} dy\\
=& \int_{(0,\infty)^n} f(y) \int_a^b \frac{\partial}{\partial x_1}\Big(W_t^{\lambda_j}(x_j,y_j) -  \frac{e^{-\frac{|x-y|^2}{4t}}}{(2\sqrt{\pi t})^n}\prod_{j=1}^n (x_jy_j)^{-\lambda_j}\Big) \frac{dt}{\sqrt{t}}\prod_{j=1}^n y_j^{2\lambda_j} dy, \quad  x \in (0,\infty)^n .  \nonumber
\end{align}
By combining \eqref{C4}, \eqref{42}, \eqref{43} and \eqref{44} we can prove \eqref{C3}. Moreover the estimations that we have established show the absolute convergence of the integral in \eqref{C3}.\\

Thus the proof of Proposition \ref{Proposition 4.1} is finished.
    \end{proof}

    Proposition \ref{Proposition 4.1} allows us to define the Riesz transforms $R_i^{\lambda_1, \ldots, \lambda_n},$ $i=1, \ldots, n,$ on $C_c^\infty ((0,\infty)^n).$

 \begin{Prop}\label{Proposition 4.2}
 Let $f\in C_c^\infty ((0,\infty)^n)$ and $\lambda_j>-1/2,$ $j=1, \ldots, n.$ Then, for every $i=1, \ldots, n,$ the function $\Delta_{\lambda_1, \ldots, \lambda_n}^{-1/2} f$ admits derivative $\frac{\partial}{\partial x_i} \Delta_{\lambda_1, \ldots, \lambda_n}^{-1/2} f$ with respect to $x_i$ on almost all $(0,\infty)^n$, and
 $$\frac{\partial}{\partial x_i} \Delta_{\lambda_1, \ldots, \lambda_n}^{-1/2} f(x)= \lim_{\varepsilon \rightarrow 0^+} \int_{|x-y|>\varepsilon}
    R_i^{\lambda_1,\ldots, \lambda_n} (x,y) f(y) \prod_{j=1}^n y_j^{2\lambda_j} dy, \text{ a.e. } x\in (0,\infty)^n. $$
 \end{Prop}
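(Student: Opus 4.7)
\emph{Proof plan.} The plan is to exploit Proposition~\ref{Proposition 4.1}, which already identifies the derivative of the ``corrected'' potential $\Delta_{\lambda_1,\dots,\lambda_n}^{-1/2}f(x) - \prod_{j=1}^n x_j^{-\lambda_j}\Delta^{-1/2}(\prod_{j=1}^n y_j^{\lambda_j}f)(x)$ as an absolutely convergent integral of $f(R_i^{\lambda_1,\dots,\lambda_n}-\mathcal R_i^{\lambda_1,\dots,\lambda_n})\prod_j y_j^{2\lambda_j}$. What remains is to compute the derivative of the Euclidean remainder $\prod_j x_j^{-\lambda_j}\Delta^{-1/2}(\prod_j y_j^{\lambda_j}f)(x)$ and recognize it as the principal value of $\int \mathcal R_i^{\lambda_1,\dots,\lambda_n}(x,y)f(y)\prod_j y_j^{2\lambda_j}\,dy$. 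Summing the two contributions will then yield the p.v.\ of $\int R_i^{\lambda_1,\dots,\lambda_n}f\prod_j y_j^{2\lambda_j}$.

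First I would set $g(y):=\prod_{j=1}^n y_j^{\lambda_j}f(y)$, which, extended by zero to $\mathbb R^n$, lies in $C_c^\infty(\mathbb R^n)$. Thus $\Delta^{-1/2}g$ is smooth on $\mathbb R^n$, and classical Calder\'on--Zygmund theory for the Euclidean Riesz transform gives, at every $x$,
\[
\frac{\partial}{\partial x_i}\Delta^{-1/2}g(x) = \lim_{\varepsilon\to 0^+}\int_{|x-y|>\varepsilon}\frac{1}{2^n\pi^{(n+1)/2}}\frac{\partial}{\partial x_i}\!\left(\int_0^\infty \frac{e^{-|x-y|^2/4t}}{t^{(n+1)/2}}\,dt\right)g(y)\,dy.
\]
The product rule then gives $\partial_{x_i}\big(\prod_j x_j^{-\lambda_j}\Delta^{-1/2}g(x)\big) = -\lambda_i x_i^{-1}\prod_j x_j^{-\lambda_j}\Delta^{-1/2}g(x) + \prod_j x_j^{-\lambda_j}\partial_{x_i}\Delta^{-1/2}g(x)$.

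Next I would expand $\mathcal R_i^{\lambda_1,\dots,\lambda_n}(x,y)$ by the product rule. The $x_i$-derivative either lands on the Euclidean scalar kernel $\int_0^\infty t^{-(n+1)/2}e^{-|x-y|^2/4t}\,dt$, which (via the substitution $u=|x-y|^2/4t$) is a constant multiple of $|x-y|^{-(n-1)}$ and differentiates to a classical Riesz kernel proportional to $(y_i-x_i)/|x-y|^{n+1}$; or it lands on $\prod_j(x_jy_j)^{-\lambda_j}$, producing $-\lambda_i x_i^{-1}\prod_j(x_jy_j)^{-\lambda_j}$ times that same scalar kernel. Integrating $\mathcal R_i^{\lambda_1,\dots,\lambda_n}(x,y)f(y)\prod_j y_j^{2\lambda_j}$ over $\{|x-y|>\varepsilon\}$, the first piece becomes, up to the factor $\prod_j x_j^{-\lambda_j}$, the truncated Euclidean Riesz transform of $g$, which converges to $\partial_{x_i}\Delta^{-1/2}g(x)$; the second piece converges absolutely (since $|x-y|^{-(n-1)}$ is locally integrable for $n\ge 2$) to $-\lambda_i x_i^{-1}\prod_j x_j^{-\lambda_j}\Delta^{-1/2}g(x)$. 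Matching them yields, for a.e.\ $x\in(0,\infty)^n$,
\[
\lim_{\varepsilon\to 0^+}\int_{|x-y|>\varepsilon}\mathcal R_i^{\lambda_1,\dots,\lambda_n}(x,y)f(y)\prod_{j=1}^n y_j^{2\lambda_j}\,dy = \frac{\partial}{\partial x_i}\Big(\prod_{j=1}^n x_j^{-\lambda_j}\Delta^{-1/2}g(x)\Big).
\]

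Finally, Proposition~\ref{Proposition 4.1} allows me to write the derivative of the corrected potential as $\lim_{\varepsilon\to 0^+}\int_{|x-y|>\varepsilon}(R_i^{\lambda_1,\dots,\lambda_n}-\mathcal R_i^{\lambda_1,\dots,\lambda_n})f\prod_j y_j^{2\lambda_j}dy$, thanks to absolute convergence. Adding this to the identity above and recombining the two truncated integrals produces the formula in Proposition~\ref{Proposition 4.2}. The main obstacle I anticipate is the careful bookkeeping of multiplicative constants: one has to verify that the constant produced by $\int_0^\infty t^{-(n+1)/2}e^{-|x-y|^2/4t}\,dt = c_n|x-y|^{-(n-1)}$ is exactly the one entering the Euclidean formula for $\Delta^{-1/2}$ displayed before Proposition~\ref{Proposition 4.1}, so that the principal-value integral of $\mathcal R_i^{\lambda_1,\dots,\lambda_n}$ reproduces $\partial_{x_i}$ of the Euclidean piece with no residual terms. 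Once this matching is established, the conclusion follows directly from Proposition~\ref{Proposition 4.1} and the classical theory of Euclidean Riesz transforms.
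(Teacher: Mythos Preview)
Your proposal is correct and follows essentially the same approach as the paper: both arguments use the classical Euclidean Riesz transform theory to show that $\partial_{x_i}\big(\prod_j x_j^{-\lambda_j}\Delta^{-1/2}(\prod_j y_j^{\lambda_j}f)\big)$ equals the principal-value integral of $\mathcal R_i^{\lambda_1,\dots,\lambda_n}$ (the paper handles the product rule and the absolutely convergent non-singular term in one line, while you spell out the two pieces explicitly), and then invoke Proposition~\ref{Proposition 4.1} to add in the absolutely convergent difference $R_i^{\lambda_1,\dots,\lambda_n}-\mathcal R_i^{\lambda_1,\dots,\lambda_n}$. The constant-matching you worry about is automatic, since $\mathcal R_i^{\lambda_1,\dots,\lambda_n}$ is defined with exactly the same normalization $\frac{1}{2^n\pi^{(n+1)/2}}$ as the Euclidean $\Delta^{-1/2}$ displayed before Proposition~\ref{Proposition 4.1}.
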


 \begin{proof}
 Let $i=1, \ldots, n.$ As it is well known, for every $g\in C_c^\infty(\mathbb{R}^n),$ $\Delta^{-1/2}g$ admits derivative $\frac{\partial}{\partial x_i} \Delta^{-1/2} g$ with respect to $x_i$ on almost all $\mathbb{R}^n$ and
 $$ \frac{\partial}{\partial x_i} \Delta^{-1/2} g(x)= \lim_{\varepsilon \rightarrow 0^+} \int_{|x-y|>\varepsilon} g(y) \frac{\partial}{\partial x_i}\int_0^\infty \frac{e^{-\frac{|x-y|^2}{4t}}}{2^n(\pi t)^{(n+1)/2}} dt dy, \text{ a.e. } x\in (0,\infty)^n.$$
 Moreover, for every $x\in (0,\infty)^n,$ it has
 $$\int_{(0,\infty)^n} |f(y)|\prod_{j=1}^n y_j^{\lambda_j}\int_0^\infty \frac{e^{-\frac{|x-y|^2}{4t}}}{t^{\frac{n+1}{2}}}dt dy \leq C \int_{(0,\infty)^n} \frac{|f(y)| \overset{n}{\underset{j=1}{\prod}} y_j^{\lambda_j}}{|x-y|^{n-1}} dy <  \infty.$$
 Then,
 \begin{align*}
    \frac{\partial}{\partial x_i}&\left( \prod_{j=1}^n x_j^{-\lambda_j}\Delta^{-1/2}\big(\prod_{j=1}^n y_j^{\lambda_j} f\big )(x)\right)\\
       &\qquad = \lim_{\varepsilon \rightarrow 0^+} \int_{|x-y|>\varepsilon} f(y) \frac{\partial}{\partial x_i}\left( \prod_{j=1}^n x_j^{-\lambda_j} \int_0^\infty \frac{e^{-\frac{|x-y|^2}{4t}}}{2^n(\pi t)^{(n+1)/2}} dt\right)\prod_{j=1}^n y_j^{\lambda_j}dy, \text{ a.e. } x\in (0,\infty)^n.
 \end{align*}
 Hence, from Proposition \ref{Proposition 4.1} we conclude that $\Delta_{\lambda_1, \ldots, \lambda_n}^{-1/2} f $ admits derivative  $\frac{\partial}{\partial x_i}\Delta_{\lambda_1,\ldots, \lambda_n}^{-1/2}$ with respect to $x_i$ on almost all $(0,\infty)^n$ and
 \begin{align*}
    \frac{\partial}{\partial x_i}\Delta_{\lambda_1,\ldots, \lambda_n}^{-1/2} f(x)= & \frac{\partial}{\partial x_i}
       \left( \Delta_{\lambda_1, \ldots, \lambda_n}^{-1/2} f(x) - \prod_{j=1}^n x_j^{\lambda_j}\Delta^{-1/2}\big(f\prod_{j=1}^n y_j^{\lambda_j}\big)(x)\right)\\
      & + \frac{\partial}{\partial x_i}\left( \prod_{j=1}^n x_j^{\lambda_j}\Delta^{-1/2}\big(f\prod_{j=1}^n y_j^{\lambda_j}\big)(x)\right)\\
    = & \lim_{\varepsilon \rightarrow 0^+} \int_{|x-y|>\varepsilon} f(y) \left( R_i^{\lambda_1,\ldots, \lambda_n}(x,y)-
       \mathcal{R}_i^{\lambda_1,\ldots, \lambda_n}(x,y)\right)\prod_{j=1}^n y_j^{2\lambda_j} dy\\
    & + \lim_{\varepsilon \rightarrow 0^+} \int_{|x-y|>\varepsilon} f(y)
       \mathcal{R}_i^{\lambda_1,\ldots, \lambda_n}(x,y)\prod_{j=1}^n y_j^{2\lambda_j} dy\\
    =& \lim_{\varepsilon \rightarrow 0^+} \int_{|x-y|>\varepsilon} f(y)
       R_i^{\lambda_1,\ldots, \lambda_n}(x,y)\prod_{j=1}^n y_j^{2\lambda_j} dy, \text{ a.e. } x\in(0,\infty)^n.
 \end{align*}
 \end{proof}
 We now prove that, for every $f\in L^p((0,\infty)^n, \overset{n}{\underset{j=1}{\prod}} x_j^{2\lambda_j} dx),$ $1\leq p <\infty,$ exists the limit
 $$ \lim_{\varepsilon \rightarrow 0^+} \int_{|x-y|>\varepsilon} R_i^{\lambda_1, \ldots, \lambda_n}(x,y) f(y) \prod_{j=1}^n y_j^{2\lambda_j}dy, \text{ a.e } x\in (0,\infty)^n,$$
 with $i=1, \ldots, n.$ In order to show this we consider, for every $i=1, \ldots, n,$ the maximal operator $R_{i,*}^{\lambda_1, \ldots,\lambda_n}$ defined by
 $$ R_{i,*}^{\lambda_1, \ldots,\lambda_n}(f)(x) = \sup_{\varepsilon>0}\left| \int_{|x-y|>\varepsilon} R_i^{\lambda_1,\ldots, \lambda_n} (x,y) f(y) \prod_{j=1}^n y_j^{2\lambda_j}dy \right|. $$
 \begin{Prop}\label{Proposition 4.3}
 Let $\lambda_j >-1/2, $ $j=1, \ldots, n,$ and $i=1,\ldots, n.$ The maximal operator $R_{i,*}^{\lambda_1, \ldots, \lambda_n}$ is bounded from
 $L^p((0,\infty)^n, \overset{n}{\underset{j=1}{\prod}} x_j^{2\lambda_j} dx)$ into itself, $1< p<\infty$ and from $ L^1((0,\infty)^n, \overset{n}{\underset{j=1}{\prod}} x_j^{2\lambda_j} dx)$ into $L^{1,\infty}((0,\infty)^n, \overset{n}{\underset{j=1}{\prod}} x_j^{2\lambda_j} dx).$
 \end{Prop}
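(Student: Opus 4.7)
The plan is to follow the template developed in Sections~2 and~3: split the kernel $R_i^{\lambda_1,\ldots,\lambda_n}(x,y)$ into a ``classical'' principal part and a remainder, handle the former by the Euclidean maximal truncated Riesz transform on dyadic cubes, and reduce the latter to the auxiliary operators of Section~2. Set $D(x)=\prod_{j=1}^n(x_j/2,2x_j)$ and decompose
$$R_i^{\lambda_1,\ldots,\lambda_n}(x,y) = \mathcal{R}_i^{\lambda_1,\ldots,\lambda_n}(x,y)\chi_{D(x)}(y) + \bigl[R_i^{\lambda_1,\ldots,\lambda_n}(x,y)-\mathcal{R}_i^{\lambda_1,\ldots,\lambda_n}(x,y)\chi_{D(x)}(y)\bigr],$$
giving rise to maximal operators $R_{i,*}^{\mathrm{prin}}$ and $R_{i,*}^{\mathrm{rem}}$ that I would bound separately.

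For $R_{i,*}^{\mathrm{prin}}$ I use the dyadic partition $Q_m=\prod_j [2^{m_j},2^{m_j+1})$, $m\in\mathbb{Z}^n$; for $x\in Q_m$ one has $D(x)\subset\tilde{Q}_m:=\prod_j[2^{m_j-1},2^{m_j+2})$, and the weight $\prod_j x_j^{2\lambda_j}$ is comparable to $2^{2\sum m_j\lambda_j}$ on both cubes. Factoring $\prod_j (x_jy_j)^{-\lambda_j}$ from $\mathcal{R}_i^{\lambda_1,\ldots,\lambda_n}(x,y)$, the principal kernel becomes the classical first-order Riesz kernel on $\mathbb{R}^n$ plus a lower-order Riesz-potential contribution arising from differentiating the factor $x_i^{-\lambda_i}$. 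The maximal truncated classical Riesz transform is of weak type $(1,1)$ and bounded on $L^p(\mathbb{R}^n,dx)$ for $1<p<\infty$; transferring these bounds cube by cube to the Bessel-weighted spaces, exactly as the operators $Y_m^{\lambda_1,\ldots,\lambda_n}$ were treated in the proof of Claim~\ref{Claim1}, yields the desired estimates for $R_{i,*}^{\mathrm{prin}}$.

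For $R_{i,*}^{\mathrm{rem}}$ I would bring the absolute value inside the supremum,
$$R_{i,*}^{\mathrm{rem}}(f)(x)\le \int_{(0,\infty)^n}\bigl|R_i^{\lambda_1,\ldots,\lambda_n}(x,y)-\mathcal{R}_i^{\lambda_1,\ldots,\lambda_n}(x,y)\chi_{D(x)}(y)\bigr|\,|f(y)|\prod_{j=1}^n y_j^{2\lambda_j}\,dy,$$
and split $(0,\infty)^n$ into $3^n$ regions according to whether each $y_j$ lies in $(0,x_j/2)$, $(x_j/2,2x_j)$ or $(2x_j,\infty)$. On the local region $y\in D(x)$, the asymptotics \eqref{E1}--\eqref{E3} combined with the bounds \eqref{B11}, \eqref{B12}, \eqref{B14}, \eqref{B15}---but with the $x_i$-derivative in place of the $t$-derivative and integrated against $dt/\sqrt{t}$---should yield a near-cancellation estimate of the form $|R_i^{\lambda_1,\ldots,\lambda_n}-\mathcal{R}_i^{\lambda_1,\ldots,\lambda_n}|(x,y)\le C\prod_j x_j^{-2\lambda_j-1}$, analogous to \eqref{B13}; the resulting integral operator is then controlled by $H_{loc}^{\lambda_1,\ldots,\lambda_n}$. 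On the non-local regions the estimates \eqref{A0}, \eqref{A3}, \eqref{A6} and an analogue of \eqref{B10} for $\partial_{x_i}W_t^\lambda$ reduce each sub-operator, via \cite[Proposition~1]{Din}, to compositions of the auxiliary operators $L_{\alpha_1,\ldots,\alpha_k}$, $H_\infty^k$, $H_{loc}^{\alpha_1,\ldots,\alpha_k}$, $\mathbb{W}_{*,loc}^{\alpha_1,\ldots,\alpha_k}$ and $\mathcal{H}_{l,k}^{\alpha_1,\ldots,\alpha_k}$ of Section~2. The full range $1<p<\infty$ then follows by standard interpolation from the weak-$(1,1)$ bound together with the $L^2$-boundedness of $R_i^{\lambda_1,\ldots,\lambda_n}$, which is a direct consequence of the factorization $\Delta_{\lambda_1,\ldots,\lambda_n}=\sum_j(\partial/\partial x_j)^\ast(\partial/\partial x_j)$.

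The main obstacle is establishing the local-region near-cancellation estimate for the remainder kernel. Unlike the $g$-function proof, where the $t$-derivative is paired with an $L^2_t$ norm, here the $x_i$-derivative together with the $dt/\sqrt{t}$ integral demand one more order of cancellation in the asymptotic expansion of $I_{\lambda-1/2}$ obtained from \eqref{E2} and \eqref{E3}. This finer analysis has essentially already been carried out in the proof of Proposition~\ref{Proposition 4.1}, so extending it with a uniform-in-$\varepsilon$ control of the truncation is the central technical point; once this sharper estimate is in hand, the rest of the argument becomes a routine adaptation of the decomposition techniques of Sections~2 and~3.
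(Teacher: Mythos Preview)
Your overall strategy---split off a local ``classical'' principal part handled by dyadic transference from the Euclidean maximal truncated Riesz transform, and dominate the remainder by a positive kernel operator reducible to the auxiliary operators of Section~2---is exactly the paper's approach; its telescoping decomposition $R_1=\sum_{l=1}^{n+1}R_{1,l}$ is just a different bookkeeping of the same split, with $R_{1,n+1}$ playing the role of your $R_{i,*}^{\mathrm{prin}}$.

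There are, however, two genuine gaps. First, the claimed local pointwise bound $|R_i-\mathcal{R}_i|(x,y)\le C\prod_j x_j^{-2\lambda_j-1}$ on $D(x)$ fails for $n\ge 2$. The $t$-integral couples the coordinates: using for instance the estimate $|\partial_{x_1}W_t^{\lambda_1}-(x_1y_1)^{-\lambda_1}\partial_{x_1}\mathbb{W}_t|\le C(x_1y_1)^{-\lambda_1-1/2}t^{-1/2}e^{-(x_1-y_1)^2/4t}$ for $t<x_1y_1$, together with $W_t^{\lambda_j}\le C(x_jy_j)^{-\lambda_j}t^{-1/2}e^{-(x_j-y_j)^2/4t}$, the integral $\int_0^\infty(\cdots)\,dt/\sqrt t$ produces a factor comparable to $|x-y|^{-(n-1)}$, not a bounded kernel; so the resulting operator is not $H_{loc}^{\lambda_1,\ldots,\lambda_n}$. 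The paper sidesteps this by \emph{not} seeking a pointwise kernel bound: it writes $\int_0^{x_1y_1}(\cdots)\,dt\le\bigl(\int_0^{2x_1^2}dt/\sqrt t\bigr)\cdot\sup_{t>0}[\sqrt t\cdot(\cdots)]$ and recognizes the resulting expression as a variant of $W_*^{\lambda_1,\ldots,\lambda_n}$ applied to $|f|$, whose boundedness was established in Section~2.

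Second, your route to the $L^p$ bound for $R_{i,*}$ is invalid. The factorization $\Delta_\lambda=\sum_j\partial_j^*\partial_j$ gives $L^2$-boundedness of the \emph{linear} operator $R_i$, not of the \emph{maximal} operator $R_{i,*}$; interpolating weak-$(1,1)$ for $R_{i,*}$ against $L^2$ for $R_i$ says nothing about $R_{i,*}$ on $L^p$, and duality is unavailable for the sublinear $R_{i,*}$. The paper instead proves $L^p$-boundedness of each remainder piece directly: since $R_{i,*}^{\mathrm{rem}}$ is dominated by a positive integral operator, it suffices to observe that $H_\infty^k$, $H_{loc}^{\alpha}$, $L_\alpha$ and $W_*^{\lambda_2,\ldots,\lambda_n}$ are themselves bounded on $L^p$ for all $1<p<\infty$, which follows from elementary Hardy-type arguments and the general diffusion-semigroup theory.
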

 \begin{proof}
 We consider $i=1.$ For the other values of $i$ we can proceed analogously. We can write
 \begin{align*}
   & R_1^{\lambda_1, \ldots, \lambda_n}(x,y)= \frac{1}{\sqrt{\pi}}\int_0^\infty \left( \frac{\partial}{\partial x_1}
       W_t^{\lambda_1}(x_1,y_1)-\chi_{\{ x_1/2<y_1<2x_1\}}(y_1)(x_1y_1)^{-\lambda_1}\frac{\partial}{\partial x_1}\mathbb{W}_t(x_1,y_1) \right)\\
    &\times\prod_{j=2}^n W_t^{\lambda_j}(x_j,y_j)\frac{dt}{\sqrt{t}}\\
    &+ \sum_{l=2}^n \frac{1}{\sqrt{\pi}}\int_0^\infty (x_1y_1)^{-\lambda_1}
       \frac{\partial}{\partial x_1}\mathbb{W}_t(x_1,y_1) \prod_{j=2}^{l-1}(x_jy_j)^{-\lambda_j}\mathbb{W}_t(x_j,y_j)\\
    & \times \prod_{j=1}^{l-1}\chi_{\{x_j/2<y_j<2x_j\}}(y_j)\left(W_t^{\lambda_l}(x_l,y_l)-\chi_{\{ x_l/2<y_l<2x_l\}}(y_l)(x_ly_l)^{-\lambda_l}\mathbb{W}_t(x_l,y_l)  \right)
   \prod_{j=l+1}^n W_t^{\lambda_j}(x_j,y_j)\frac{dt}{\sqrt{t}}\\
   & + \frac{1}{\sqrt{\pi}}\int_0^\infty (x_1y_1)^{-\lambda_1}
       \frac{\partial}{\partial x_1}\mathbb{W}_t(x_1,y_1) \prod_{j=2}^{n}(x_jy_j)^{-\lambda_j}\mathbb{W}_t(x_j,y_j)\prod_{j=1}^{n}\chi_{\{x_j/2<y_j<2x_j\}}(y_j)\frac{dt}{\sqrt{t}}\\
    = & \sum_{l=1}^{n+1} R_{1,l}^{\lambda_1, \ldots, \lambda_n}(x,y), \quad x,y \in (0, \infty)^n.
 \end{align*}

 We now study the maximal operator $R_{1,l,*}^{\lambda_1, \ldots, \lambda_n}$ associated with $R_{1,l}^{\lambda_1, \ldots, \lambda_n}$ in the usual way. Firstly we consider $R_{1,1,*}^{\lambda_1, \ldots, \lambda_n}.$ According to \cite[(22)]{MS}, \eqref{E1} and \eqref{E2}, we have
 \begin{equation}\label{X1}
 \Big|\frac{\partial}{\partial u}W_t^\lambda(u,v)\Big|\le C\frac{u+v}{t^{\lambda+3/2}}e^{-(u^2+v^2)/4},\,\,\,t,u,v\in (0,\infty)\,\,and\,\,\frac{uv}{t}\le 1,
 \end{equation}
 and
 \begin{equation}\label{X2}
 \Big|\frac{\partial}{\partial u}W_t^\lambda(u,v)\Big|\le C (uv)^{-\lambda}\frac{e^{-(u-v)^2/8t}}{t}, \,\,\,t,u,v\in (0,\infty)\,\,and\,\,\frac{uv}{t}\ge 1,
 \end{equation}
Inequalities \eqref{X1} and \eqref{X2} lead, for every $\lambda >-1/2$ and $0<v<u/2,$, to
 \begin{equation}\label{C14}
    \left| \frac{\partial}{\partial u}W_t^\lambda (u,v)\right|\leq C \frac{ue^{-u^2/32t}}{t^{\lambda+3/2}} \leq C \frac{e^{-u^2/40t}}{t^{\lambda+1}}, \quad t >0
 \end{equation}
 Suppose that $1\le l\leq m\leq n.$ According to \eqref{A0}, \eqref{A3}, \eqref{A6} and \eqref{C14}, we get
 \begin{align*}
   \int_0^{\frac{x_1}{2}}\ldots \int_0^{\frac{x_l}{2}} &\int_{\frac{x_{l+1}}{2}}^{2x_{l+1}}\ldots \int_{\frac{x_m}{2}}^{2x_m}\int_{2x_{m+1}}^\infty
      \ldots \int_{2x_n}^\infty |R_{1,1}^{\lambda_1, \ldots, \lambda_n}(x,y)| |f(y)|\prod_{j=1}^n y_j^{2\lambda_j}dy\\
   \leq & C \int_0^{\frac{x_1}{2}}\ldots \int_0^{\frac{x_l}{2}} \int_{\frac{x_{l+1}}{2}}^{2x_{l+1}}\ldots
      \int_{\frac{x_m}{2}}^{2x_m}\int_{2x_{m+1}}^\infty \ldots \int_{2x_n}^\infty \prod_{j=m+1}^n \frac{1}{y_j^{2\lambda_j+1}}|f(y)|\\
   & \times \int_0^\infty \frac{e^{- (\overset{l}{\underset{j=1}{\sum}} x_j^2)/40t}}{t^{\overset{l}{\underset{j=1}{\sum}}(\lambda_j+ 1/2)+1}}
      \prod_{j=l+1}^m \left( \frac{1}{x_j^{2\lambda_j+1}}+ \frac{(x_jy_j)^{-\lambda_j}}{\sqrt{t}} e^{-(x_j-y_j)^2/4t}\right)dt \prod_{j=1}^n y_j^{2\lambda_j}dy
 \end{align*}
 Since, for every $k\in \mathbb{N},$ $H_\infty^k$ and $H_{loc}^{\alpha_1, \ldots, \alpha_k}$ are bounded from $L^1((0,\infty)^k, \overset{k}{\underset{j=1}{\prod}} x_j^{2\alpha_j}dx)$ into itself, provided that $\alpha_j>-1/2,$ $j=1,\ldots, k,$ the operator
 $$ S_{l,m}^{\lambda_1,\ldots, \lambda_n}(f)(x)= \int_0^{\frac{x_1}{2}}\ldots \int_0^{\frac{x_l}{2}} \int_{\frac{x_{l+1}}{2}}^{2x_{l+1}}\ldots \int_{\frac{x_m}{2}}^{2x_m}\int_{2x_{m+1}}^\infty \ldots \int_{2x_n}^\infty |R_{1,1}^{\lambda_1, \ldots, \lambda_n}(x,y)| |f(y)|\prod_{j=1}^n y_j^{2\lambda_j}dy$$
 is bounded from $L^1((0,\infty)^n, \overset{n}{\underset{j=1}{\prod}} x_j^{2\lambda_j}dx)$ into $L^{1,\infty}((0,\infty)^n, \overset{n}{\underset{j=1}{\prod}} x_j^{2\lambda_j}dx),$ provided that, for every $r,k\in \mathbb{N}$, $1<r\leq k$ and $\alpha_j>-1/2,$ $j=1,\ldots,k,$ the operator
 \begin{align*}
    \mathcal{S}_r^{\alpha_1,\ldots,\alpha_k}(g)(x) =& \int_0^{\frac{x_1}{2}}\ldots \int_0^{\frac{x_r}{2}} \int_{\frac{x_{r+1}}{2}}^{2x_{r+1}}
       \ldots \int_{\frac{x_k}{2}}^{2x_k}|g(y)|\\
       & \times \int_0^\infty\prod_{j=r+1}^k (x_jy_j)^{-\alpha_j}\frac{e^{-(\overset{r}{\underset{j=1}{\sum}}x_j^2 +\overset{k}{\underset{j=r+1}{\sum}}(x_j-y_j)^2)/40t}}
       {t^{\overset{r}{\underset{j=1}{\sum}}(\alpha_j+1/2)+1+(k-r)/2}}dt \prod_{j=1}^ky_j^{2\alpha_j} dy\\
    = & c_{\alpha_1, \ldots, \alpha_k}^{k,r}\int_0^{\frac{x_1}{2}}\ldots \int_0^{\frac{x_r}{2}} \int_{\frac{x_{r+1}}{2}}^{2x_{r+1}}
       \ldots \int_{\frac{x_k}{2}}^{2x_k}\prod_{j=r+1}^k (x_jy_j)^{-\alpha_j} |g(y)| \\
    & \times \frac{1}{\big(\overset{r}{\underset{j=1}{\sum}}x_j^2+\overset{k}{\underset{j=r+1}{\sum}}
       (x_j-y_j)^2\big)^{\overset{r}{\underset{j=1}{\sum}}(\alpha_j+1/2)+(k-r)/2}}\prod_{j=1}^ky_j^{2\alpha_j}dy,
 \end{align*}
 where $ c_{\alpha_1, \ldots, \alpha_k}^{k,r}$ is a constant, is bounded from $L^1((0,\infty)^k, \overset{k}{\underset{j=1}{\prod}} x_j^{2\alpha_j}dx)$ into $L^{1,\infty}((0,\infty)^k, \overset{k}{\underset{j=1}{\prod}} x_j^{2\alpha_j}$ $dx).$
  This last property is true because the operators $\mathcal{H}_{r,k}^{\alpha_1,\ldots,\alpha_k}$ and $L_{\alpha_1,\ldots,\alpha_k}$ are bounded from
  $L^1((0,\infty)^k, \overset{k}{\underset{j=1}{\prod}} x_j^{2\alpha_j}dx)$ into $L^{1,\infty}((0,\infty)^k, \overset{k}{\underset{j=1}{\prod}} x_j^{2\alpha_j}dx),$
  when $\alpha_j>-1/2,$ $j=1,\ldots,k,$ and $1<r\leq k,$ $r,k\in \mathbb{N}.$\\

 On the other hand, as it was proved in \cite[Lemma 5]{BHNV},
 $$ \int_0^\infty \left| \frac{\partial}{\partial x_1}W_t^{\lambda_1}(x_1,y_1)\right|\frac{dt}{\sqrt{t}}\leq C \frac{1}{x_1^{2\lambda_1 +1}}, \quad 0<y_1 <x_1/2.$$
 Then, by writing $\bar{y}=(y_2,\ldots, y_n),$ it has, for every $x\in(0,\infty)^n,$
 \begin{align*}
    S_{l,m}^{\lambda_1,\ldots, \lambda_n}(f)(x)\leq & C \int_0^{\frac{x_1}{2}}\int_0^\infty \left|\frac{\partial}{\partial x_1} W_t^{\lambda_1}(x_1,y_1) \right|  \\
    &\times \left( \sup_{t>0} \int_{(0,\infty)^{n-1}} \prod_{j=2}^n W_t^{\lambda_j}(x_j,y_j) |f(y_1,\bar{y})|\prod_{j=2}^n y_j^{2\lambda_j}d\bar{y}\right) dt y_1^{2\lambda_1}dy_1\\
    \leq & \frac{C}{x_1^{2\lambda_1+1}}\int_0^{\frac{x_1}{2}} \sup_{t>0} \int_{(0,\infty)^{n-1}} \prod_{j=2}^n W_t^{\lambda_j}(x_j,y_j) |f(y_1,\bar{y})|\prod_{j=2}^n y_j^{2\lambda_j}d\bar{y} y_1^{2\lambda_1}dy_1.
 \end{align*}
 Since $L_{\lambda_1}$ is bounded from $L^p((0,\infty),x^{2\lambda_1}dx)$ into itself and $W_*^{\lambda_2, \ldots, \lambda_n}$ is bounded from
 $L^p((0,\infty)^n,$ $ \overset{n}{\underset{j=2}{\prod}} x_j^{2\alpha_j}d\overline{x})$ into itself, $S_{l,m}^{\lambda_1,\ldots, \lambda_n}$ is bounded from
 $L^p((0,\infty)^n, \overset{n}{\underset{j=1}{\prod}} x_j^{2\alpha_j}dx)$ into itself, for every $1<p<\infty.$\\

 By using \eqref{X1} and \eqref{X2}, for every $\lambda>-1/2$ and $2u<v<\infty,$ we obtain
 \begin{equation}\label{C15}
    \left| \frac{\partial}{\partial u}W_t^\lambda(u,v)\right|\leq \frac{C}{t^{\lambda+1}}e^{-v^2/40t}, \quad t>0.
 \end{equation}

 Suppose that $1\le l\leq m \leq n.$ By \eqref{A0}, \eqref{A3}, \eqref{A6} and \eqref{C15}, for each $ x=(x_1, \dots, x_n)\in(0, \infty)^n$, it follows that,
 \begin{align*}
    \mathcal{S}_{l,m}^{\lambda_1,\ldots, \lambda_n}&(f)(x)= \int_{2x_1}^\infty  \ldots \int_{2x_l}^\infty \int_{\frac{x_{l+1}}{2}}^{2x_{l+1}}\ldots
    \int_{\frac{x_m}{2}}^{2x_m}\int_{0}^{\frac{x_{m+1}}{2}} \ldots \int_0^{\frac{x_n}{2}} |R_{1,1}^{\lambda_1,\ldots,\lambda_n}(x,y)|
     |f(y)| \prod_{j=1}^n y_j^{2\lambda_j}dy
 \end{align*}
 \begin{align*}
    \leq & C \int_{2x_1}^\infty  \ldots \int_{2x_l}^\infty \int_{\frac{x_{l+1}}{2}}^{2x_{l+1}}\ldots \int_{\frac{x_m}{2}}^{2x_m}\int_{0}^{\frac{x_{m+1}}{2}}
    \ldots \int_0^{\frac{x_n}{2}} |f(y)|\int_0^\infty \frac{e^{- (\overset{l}{\underset{j=1}{\sum}}y_j^2+ \overset{n}{\underset{j=m+1}{\sum}}x_j^2)/40t}}{t^{\overset{l}{\underset{j=1}{\sum}}
    (\lambda_j + 1/2) + \overset{n}{\underset{j=m+1}{\sum}} (\lambda_j+1/2)+1}}\\
    & \times \prod_{j=l+1}^m \left[ (x_jy_j)^{-\lambda_j}\frac{e^{-(x_j-y_j)^2/4t}}{\sqrt{t}} +
    \frac{1}{x_j^{2\lambda_j+1}} \right] dt \prod_{j=1}^n y_j^{2\lambda_j} dy.
 \end{align*}

 Since the operator $H_{loc}^{\alpha_1,\ldots,\alpha_k}$ is bounded from $L^1((0,\infty)^k, \overset{k}{\underset{j=1}{\prod}} x_j^{2\alpha_j}dx)$ into itself, provided that $\alpha_j>-1/2,$ $j=1,\ldots, k,$ in order that to see that the operator $\mathcal{S}_{l,m}^{\lambda_1,\ldots, \lambda_n}$ is bounded from $L^1((0,\infty)^n, \overset{n}{\underset{j=1}{\prod}} x_j^{2\lambda_j}dx)$ into $L^{1,\infty}((0,\infty)^n, \overset{n}{\underset{j=1}{\prod}} x_j^{2\lambda_j}dx)$ it is sufficient to show that, for every $\alpha_j>-1/2,$ $j=1,\ldots,k,$ and $1<s\leq r\leq k,$ $s,r,k \in \mathbb{N},$ the operator
 \begin{align*}
    T&_{s,r}^{\alpha_1,\ldots, \alpha_k}(g)(x)= \int_{2x_1}^\infty \int_{2x_2}^\infty \ldots \int_{2x_s}^\infty \int_{\frac{x_{s+1}}{2}}^{2x_{s+1}}\ldots \int_{\frac{x_r}{2}}^{2x_r}\int_{0}^{\frac{x_{r+1}}{2}} \ldots \int_0^{\frac{x_k}{2}} |g(y)|\prod_{j=1}^k y_j^{2\alpha_j}\\
    & \times \frac{\overset{r}{\underset{j=s+1}{\prod}}(x_jy_j)^{-\alpha_j}dy}{(\overset{s}{\underset{j=1}{\sum}}y_j^2+ \overset{r}{\underset{j=s+1}{\sum}}(x_j-y_j)^2+ \overset{k}{\underset{j=r+1}{\sum}} x_j^2)^{\overset{s}{\underset{j=1}{\sum}} (\lambda_j + 1/2) + \overset{k}{\underset{j=r+1}{\sum}} (\lambda_j+1/2)+(r-s)/2}}, \ x\in (0,\infty)^k,
 \end{align*}
 is bounded from $L^1((0,\infty)^k, \overset{k}{\underset{j=1}{\prod}} x_j^{2\alpha_j}dx)$ into $L^{1,\infty}((0,\infty)^k, \overset{k}{\underset{j=1}{\prod}} x_j^{2\alpha_j}dx).$ Let $\alpha_j>-1/2,$ $j=1,\ldots,k,$ and $0<s\leq r\leq k.$
 By proceeding as in the proof of \cite[Case 3]{NS} we can prove that the operator $T_{s,r}^{\alpha_1,\ldots,\alpha_k}$ is bounded from $L^1((0,\infty)^k, \overset{k}{\underset{j=1}{\prod}} x_j^{2\alpha_j}dx)$ into $L^{1,\infty}((0,\infty)^k, \overset{k}{\underset{j=1}{\prod}} x_j^{2\alpha_j}dx),$ when $s<r.$
 Also, the operator $T_{s,s}^{\alpha_1,\ldots,\alpha_k}$ is bounded from $L^1((0,\infty)^k,$ $ \overset{k}{\underset{j=1}{\prod}} x_j^{2\alpha_j}dx)$ into $L^{1,\infty}((0,\infty)^k, \overset{k}{\underset{j=1}{\prod}} x_j^{2\alpha_j}dx),$ because
 \begin{align*}
    T_{s,s}(g)(x)\leq\frac{C}{(\overset{k}{\underset{j=s+1}{\sum}} x_j^2)^{\overset{k}{\underset{j=s+1}{\sum}}(\lambda_j+1/2)}}
    \int_0^{\frac{x_{s+1}}{2}}\ldots \int_0^{\frac{x_{k+1}}{2}}\int_{2x_1}^\infty \ldots \int_{2x_s}^\infty \frac{|g(y)|}{y_1...y_s}\prod_{j=s+1}^k y_j^{2\lambda_j} dy, \ x\in (0,\infty)^k,
 \end{align*}
 and $H_\infty^s$ is bounded from $L^1((0,\infty)^s, \overset{s}{\underset{j=1}{\prod}} x_j^{2\lambda_j}dx)$ into itself and
 $L_{\lambda_{s+1},\ldots,\lambda_k}$ is bounded from $L^1((0,\infty)^{k-s}, \overset{k}{\underset{j=s+1}{\prod}} x_j^{2\lambda_j}dx)$ into
 $L^{1,\infty}((0,\infty)^{k-s}, \overset{k}{\underset{j=s+1}{\prod}} x_j^{2\lambda_j}dx).$ Then, the operator $\mathcal{S}_{l,m}^{\lambda_1,\ldots,\lambda_n}$
 is bounded from $L^1((0,\infty)^n, \overset{n}{\underset{j=1}{\prod}} x_j^{2\lambda_j}dx)$ into
 $L^{1,\infty}((0,\infty)^n, \overset{n}{\underset{j=1}{\prod}} x_j^{2\lambda_j}dx).$\\

 Moreover, \cite[Lemma 5]{BHNV},
 $$\int_0^\infty \left|\frac{\partial}{\partial x_1} W_t^{\lambda_1}(x_1,y_1) \right|\frac{dt}{\sqrt{t}}
 \leq C \frac{x_1}{y_1^{2\lambda_1+2}}, \quad 2x_1 <y_1<\infty. $$
 Then, for each $x \in(0,\infty)^n$ , putting $\bar{y}=(y_2,\ldots,y_n),$
 $$ \mathcal{S}_{l,m}^{\lambda_1,\ldots,\lambda_n}(f)(x)\leq C\int_{2x_1}^\infty \frac{1}{y_1}\left(\sup_{t>0}\int_{(0,\infty)^{n-1}} \prod_{j=2}^n
  W_t^{\lambda_j}(x_j,y_j) |f(y_1,\bar{y})| \prod_{j=2}^n y_j^{2\lambda_j}d\bar{y}\right)dy_1.$$
 Since $H_\infty^1$ is bounded from $L^p((0,\infty),x^{2\lambda_1}dx)$ into itself and $W_*^{\lambda_2,\ldots,\lambda_n}$ is bounded from
 $L^p($ $(0,\infty)^{n-1}, \overset{n}{\underset{j=2}{\prod}} x_j^{2\lambda_j}dx)$ into itself, $\mathcal{S}_{l,m}^{\lambda_1,\ldots,\lambda_n}$ is
 bounded from $L^p((0,\infty)^n, \overset{n}{\underset{j=1}{\prod}} x_j^{2\lambda_j}dx)$ into itself, for every $1<p<\infty.$\\

 Finally, as it was proved in the proof of \cite[Lemma 3]{BHNV}, we have that
 $$
 \Big|\frac{\partial}{\partial u}W_t^\lambda(u,v)+(uv)^{-\lambda}\frac{u-v}{4\sqrt{\pi}t^{3/2}}e^{-(u-v)^2/4t}\Big|\le C\frac{e^{-(u-v)^2/4t}}{\sqrt{t}}(uv)^{-\lambda-1/2},\,\,\,uv\ge t.
 $$
 Then, by using \eqref{A0} and \eqref{A3}, we obtain that, for every $1\le l\leq m\leq n,$
 \begin{align*}
    \mathbb{S}&_{l,m}^{\lambda_1, \ldots,\lambda_n}(f)(x)=\int_{\frac{x_1}{2}}^{2x_1}\ldots \int_{\frac{x_l}{2}}^{2x_l}\int_0^{\frac{x_{l+1}}{2}} \ldots \int_0^{\frac{x_{m}}{2}}\int_{2x_{m+1}}^\infty \ldots \int_{2x_{n}}^\infty
 |R_{1,1}^{\lambda_1,\ldots,\lambda_n}(x,y)||f(y)| \prod_{j=1}^n y_j^{2\lambda_j}dy\\
    \leq& C \int_{\frac{x_1}{2}}^{2x_1}\ldots \int_{\frac{x_l}{2}}^{2x_l}\int_0^{\frac{x_{l+1}}{2}} \ldots \int_0^{\frac{x_{m}}{2}}\int_{2x_{m+1}}^\infty \ldots \int_{2x_{n}}^\infty |f(y)|
    \left[ \int_0^{x_1y_1}\frac{e^{-(x_1-y_1)^2/4t}}{t}(x_1y_1)^{-\lambda_1-1/2} \right.\\
    & \times \prod_{j=2}^n W_t^{\lambda_j}(x_j,y_j)dt
    + \left. \int_{x_1y_1}^\infty\left( \frac{e^{-x_1^2/8t}}{t^{\lambda_1+3/2}} + (x_1y_1)^{-\lambda_1} \frac{e^{-(x_1-y_1)^2/8t}}{t^{3/2}} \right)\prod_{j=2}^n W_t^{\lambda_j}(x_j,y_j)dt\right] \prod_{j=1}^n y_j^{2\lambda_j} dy\\
    \leq& C\left[ \int_0^{2x_1^2}\frac{dt}{\sqrt{t}}\sup_{t>0}\int_{\frac{x_1}{2}}^{2x_1}\int_{(0,\infty)^{n-1}} \frac{e^{-(x_1-y_1)^2/4t}}{\sqrt{t}}(x_1y_1)^{-\lambda_1-1/2}
    \prod_{j=2}^n W_t^{\lambda_j}(x_j,y_j)|f(y)| \prod_{j=1}^ny_j^{2\lambda_j}dy \right.\\
    & + \left. \left( \int_{\frac{x_1^2}{2}}^\infty \frac{dt}{t^{\lambda_1+3/2}} + x_1^{-2\lambda_1}\int_{\frac{x_1^2}{2}}^\infty \frac{dt}{t^{3/2}}\right) \sup_{t>0} \int_{\frac{x_1}{2}}^{2x_1} \int_{(0,\infty)^{n-1}} \prod_{j=2}^n W_t^{\lambda_j}(x_j,y_j)|f(y)| \prod_{j=1}^ny_j^{2\lambda_j}dy\right]\\
    \leq & C\left[ \sup_{t>0}\int_{\frac{x_1}{2}}^{2x_1}\int_{(0,\infty)^{n-1}} \frac{e^{-(x_1-y_1)^2/4t}}{\sqrt{t}}(x_1y_1)^{-\lambda_1} \prod_{j=2}^n W_t^{\lambda_j}(x_j,y_j)|f(y)| \prod_{j=1}^ny_j^{2\lambda_j}dy \right.\\
    & + \left. \sup_{t>0}\int_{(0,\infty)^{n-1}} \prod_{j=2}^n W_t^{\lambda_j}(x_j,y_j) \frac{1}{x_1^{2\lambda_1+1}}\int_{\frac{x_1}{2}}^{2x_1}f(y) y_1^{2\lambda_1}dy_1\prod_{j=2}^ny_j^{2\lambda_j}dy \right].
 \end{align*}
 Since the operator $H_{loc}^{\lambda_1}$ is bounded from $L^1((0,\infty),x_1^{2\lambda_1+1}dx_1)$ into itself, the maximal operator
 $W_*^{\lambda_2, \ldots, \lambda_n}$ is bounded from $L^1((0,\infty)^{n-1},\overset{n}{\underset{j=2}{\prod}} x_j^{2\lambda_j} dx)$ into
 $L^{1,\infty}((0,\infty)^{n-1},\overset{n}{\underset{j=2}{\prod}} x_j^{2\lambda_j} dx)$ (Theorem \ref{maximal}), and, as the arguments developed in the proof of Theorem \ref{maximal} show, the maximal operator
 $$ \mathcal{W}_*^{\lambda_1,\ldots,\lambda_n}(f)(x)= \sup_{t>0} \int_{\frac{x_1}{2}}^{2x_1}\int_{(0,\infty)^{n-1}} \frac{e^{-(x_1-y_1)^2/4t}}{\sqrt{t}}
 (x_1y_1)^{-\lambda_1}\prod_{j=2}^n W_t^{\lambda_j}(x_j,y_j) f(y) \prod_{j=1}^n y_j^{2\lambda_j} dy,$$
 is bounded from $L^1((0,\infty)^{n},\overset{n}{\underset{j=1}{\prod}} x_j^{2\lambda_j} dx)$ into
 $L^{1,\infty}((0,\infty)^{n},\overset{n}{\underset{j=1}{\prod}} x_j^{2\lambda_j} dx),$ for each $x\in(0,\infty)^n.$
 We conclude that the operator $\mathbb{S}_{l,m}^{\lambda_1, \ldots, \lambda_n}$ is bounded from
 $L^1((0,\infty)^{n},\overset{n}{\underset{j=1}{\prod}} x_j^{2\lambda_j} dx)$ into
 $L^{1,\infty}((0,\infty)^{n},$ $\overset{n}{\underset{j=1}{\prod}} x_j^{2\lambda_j} dx).$\\

 Moreover, since the operator $H_{loc}^{\lambda_1},$ $W_*^{\lambda_2,\ldots,\lambda_n}$ and $W_*^{\lambda_1, \ldots, \lambda_n}$ are bounded from
 $L^p((0,\infty),$ $x^{2\lambda_1}dx)$ into itself, $L^p((0,\infty)^{n-1},\overset{n}{\underset{j=2}{\prod}} x_j^{2\lambda_j} dx)$ into itself, and
 $L^p((0,\infty)^{n},\overset{n}{\underset{j=1}{\prod}} x^{2\lambda_j} dx)$ into itself, respectively, $\mathbb{S}_{l,m}^{\lambda_1,\ldots,\lambda_n}$
 is bounded from $L^p((0,\infty)^{n},\overset{n}{\underset{j=1}{\prod}} x_j^{2\lambda_j} dx)$ into itself, for every $1<p<\infty.$\\

 Hence, we have proved that the operator $R_{1,1}^{\lambda_1,\ldots,\lambda_n}$ defined by
 $$ R_{1,1}^{\lambda_1,\ldots,\lambda_n}(f)(x)= \int_{(0,\infty)^n}|R_{1,1}^{\lambda_1,\ldots,\lambda_n}(x,y)| f(y)\prod_{j=1}^ny_j^{2\lambda_j} dy, \quad x\in (0,\infty)^n,$$
 is bounded from $L^1((0,\infty)^{n},\overset{n}{\underset{j=1}{\prod}} x_j^{2\lambda_j} dx)$ into $L^{1,\infty}((0,\infty)^{n},\overset{n}{\underset{j=1}{\prod}} x_j^{2\lambda_j} dx)$ and from
 $L^p((0,\infty)^{n},$ $\overset{n}{\underset{j=1}{\prod}} x_j^{2\lambda_j} dx)$ into itself, for every $1<p<\infty.$ Since
 \begin{align*}
    R_{1,1,*}^{\lambda_1,\ldots,\lambda_n}(f)(x)=&\sup_{\varepsilon>0}\left| \int_{|x-y|>\varepsilon} R_{1,1}^{\lambda_1,\ldots,\lambda_n}(x,y)f(y) \prod_{j=1}^n y_j^{2\lambda_j}dy\right|\\
    \leq & R_{1,1}^{\lambda_1,\ldots,\lambda_n}(f)(x), \quad x\in (0,\infty)^n,
 \end{align*}
 $R_{1,1,*}^{\lambda_1,\ldots,\lambda_n}$ is bounded from $L^1((0,\infty)^{n},\overset{n}{\underset{j=1}{\prod}} x_j^{2\lambda_j} dx)$ into
 $L^{1,\infty}((0,\infty)^{n},\overset{n}{\underset{j=1}{\prod}} x_j^{2\lambda_j} dx)$ and from
 $L^p($ $(0,\infty)^{n},\overset{n}{\underset{j=1}{\prod}} x_j^{2\lambda_j} dx)$ into itself, for every $1<p<\infty.$\\

 In order to study the maximal operators $R_{1,l,*}^{\lambda_1,\ldots,\lambda_n},$ $l=2,\ldots, n,$ we can proceed as for
 $R_{1,1,*}^{\lambda_1,\ldots,\lambda_n}$ by taking into account \eqref{B15}.\\

 We now consider the maximal operator
 $$ R_{1,n+1,*}^{\lambda_1,\ldots,\lambda_n}(f)(x)= \sup_{\varepsilon>0}\left| \int_{|x-y|>\varepsilon} R_{1,n+1}^{\lambda_1,\ldots,\lambda_n} (x,y)f(y)\prod_{j=1}^n y_j^{2\lambda_j}dy \right|, \quad x\in (0,\infty)^n.$$
 We use now a procedure similar than the one developed in the proof of Claim \ref{Claim1} in Section 3.\\

 For every $m=(m_1,\ldots,m_n)\in \mathbb{Z}^n$ we define the operator $\mathcal{Y}_{m}^{\lambda_1,\ldots,\lambda_n}$ by
 \begin{align*}
    \mathcal{Y}_{m}^{\lambda_1,\ldots,\lambda_n}(f)(x)= &\sup_{\varepsilon>0}\left| \chi_{\overset{n}{\underset{j=1}{\prod}}
    [2^{m_j}, 2^{m_j+1})} (x) \prod_{j=1}^n x_j^{-\lambda_j}\frac{1}{\sqrt{\pi}} \int_{|x-y|>\varepsilon} f_m(y)\cdot \right.\\
    & \cdot\left. \int_0^\infty \frac{\partial}{\partial x_1}\mathbb{W}_t (x_1,y_1) \prod_{j=2}^n \mathbb{W}_t(x_j,y_j) \frac{dt}{\sqrt{t}}dy \right|, \quad x\in (0,\infty)^n,
 \end{align*}
 where $f_m(y)= \chi_{\overset{n}{\underset{j=1}{\prod}} [2^{m_j-1}, 2^{m_j+2})} (x) f(y) \overset{n}{\underset{j=1}{\prod}} y_j^{\lambda_j},$ $y\in (0,\infty)^n.$\\

 Let $\gamma>0.$ We have that
 \begin{align*}
    m_{\lambda_1,\ldots,\lambda_n}&(\{x\in(0,\infty)^n : R_{1,n+1,*}^{\lambda_1,\ldots,\lambda_n} (f)(x)>\gamma \})\\
    \leq& m_{\lambda_1,\ldots,\lambda_n}(\{x\in(0,\infty)^n : | R_{1,n+1,*}^{\lambda_1,\ldots,\lambda_n} (f)(x)-\sum_{m\in \mathbb{Z}^n} \mathcal{Y}_m^{\lambda_1,\ldots,\lambda_n}(f)(x)| >\gamma/2 \})\\
    & + \sum_{m\in \mathbb{Z}^n} m_{\lambda_1, \ldots, \lambda_n}(\{ x\in \prod_{j=1}^n[2^{m_j},2^{m_j+1}) :\mathcal{Y}_m^{\lambda_1,\ldots,\lambda_n}(f)(x) >\gamma/2 \}).
 \end{align*}

 We denotes by $R_{1,*}$ the maximal operator
 $$ R_{1,*}(g)(x) = \sup_{\varepsilon>0} \left| \int_{|x-y|>\varepsilon} \int_0^\infty \frac{\partial}{\partial x_1} \mathbb{W}_t(x_1,y_1) \prod_{j=2}^n
 \mathbb{W}_t(x_j,y_j)\frac{dt}{\sqrt{t}}g(y) dy \right|,  x\in \mathbb{R}^n.$$

 It is well known that $R_{1,*}$ is a bounded operator from $L^1(\mathbb{R}^n,dx)$ into $L^{1,\infty}(\mathbb{R}^n,dx)$ and from $L^p(\mathbb{R}^n,dx)$
 into itself, for every $1<p<\infty$.\\

 Then, for every $m=(m_1,\ldots,m_n)\in \mathbb{Z}^n,$ it gets
 \begin{align*}
    m_{\lambda_1,\ldots,\lambda_n}&(\{ x\in \prod_{j=1}^n[2^{m_j},2^{m_j+1}) :\mathcal{Y}_m^{\lambda_1,\ldots,\lambda_n}(f)(x) >\gamma/2 \})\\
    \leq & C 2^{2\overset{n}{\underset{j=1}{\sum}} \lambda_jm_j} m_0^{(n)} (\{x\in \overset{n}{\underset{j=1}{\prod}} [2^{m_j},2^{m_j+1}): R_{1,*}(f_m)({x})>M\gamma 2^{\overset{n}{\underset{j=1}{\sum}} \lambda_jm_j}\})\\
    \leq & \frac{C}{\gamma} 2^{\overset{n}{\underset{j=1}{\sum}} \lambda_jm_j}\|f_m\|_{L^1(\mathbb{R}^n,dx)}\\
    \leq & \frac{C}{\gamma} \int_{\overset{n}{\underset{j=1}{\prod}} [2^{m_j-1},2^{m_j+2})} |f(y)| \prod_{j=1}^n y_j^{2\lambda_j}dy.
 \end{align*}
 Here $M$ and $C$ denotes suitable positive constants that do not depend on $m\in \mathbb{Z}^n.$ By $m_0^{(n)}$ we represent the Lebesgue measure in $\mathbb{R}^n.$
 Hence,
 \begin{align}\label{C16}
    \sum_{m\in \mathbb{Z}^n} m_{\lambda_1,\ldots,\lambda_n}& (\{ x\in \prod_{j=1}^n[2^{m_j},2^{m_j+1}) :\mathcal{Y}_m^{\lambda_1,\ldots,\lambda_n}(f)(x) >\gamma/2 \})\\
    \leq & \frac{C}{\gamma}\|f\|_{L^1((0,\infty)^n, \overset{n}{\underset{j=1}{\prod}} x_j^{2\lambda_j})}. \nonumber
 \end{align}
 Also, if $1<p<\infty,$ we can write
 \begin{align}\label{C17}
    \|\sum_{m\in \mathbb{Z}^n} \mathcal{Y}_{m}^{\lambda_1,\ldots,\lambda_n}(f)\|_p^p= &\sum_{m\in \mathbb{Z}^n} \int_{\overset{n}{\underset{j=1}{\prod}} [2^{m_j},2^{m_j+1})} |\mathcal{Y}_m^{\lambda_1,\ldots,\lambda_n} (f)(x)|^p \prod_{j=1}^n x_j^{2\lambda_j}dx\nonumber\\
    \leq & C\sum_{m\in \mathbb{Z}^n} 2^{(2-p)\overset{n}{\underset{j=1}{\sum}} m_j\lambda_j}\int_{\mathbb{R}^n} |R_{1,*}(f_m)(x)|^p dx\nonumber\\
    \leq & C\sum_{m\in \mathbb{Z}^n} 2^{(2-p)\overset{n}{\underset{j=1}{\sum}} m_j\lambda_j} \int_{\overset{n}{\underset{j=1}{\prod}} [2^{m_j-1},2^{m_j+2})} |f(y)|^p \prod_{j=1}^n y_j^{p\lambda_j}dy\nonumber\\
    \leq & C\sum_{m\in \mathbb{Z}^n}  \int_{\overset{n}{\underset{j=1}{\prod}} [2^{m_j-1},2^{m_j+2})} |f(y)|^p \prod_{j=1}^n y_j^{2\lambda_j}dy\nonumber\\
    \leq & C\|f\|_p^p.
 \end{align}
 On the other hand, we have that
 \begin{align*}
    m_{\lambda_1,\ldots,\lambda_n}&(\{x\in(0,\infty)^n : | R_{1,n+1,*}^{\lambda_1,\ldots,\lambda_n} (f)(x)-\sum_{m\in \mathbb{Z}^n} \mathcal{Y}_m^{\lambda_1,\ldots,\lambda_n}(f)(x) |>\gamma/2 \})\\
    \leq &  \sum_{m\in \mathbb{Z}^n} m_{\lambda_1,\ldots,\lambda_n} (\{x\in\prod_{j=1}^n[2^{m_j},2^{m_j+1}) : | R_{1,n+1,*}^{\lambda_1,\ldots,\lambda_n} (f)(x)- \mathcal{Y}_m^{\lambda_1,\ldots,\lambda_n}(f)(x)| >\gamma/2 \}).
 \end{align*}
 Let $m=(m_1,\ldots,m_n)\in \mathbb{Z}^n.$ We define, for every $x\in \underset{j\in \mathbb{N}}{\prod}[2^{m_j},2^{m_j+1}),$ the sets $B_l(x)$ and $C_l(x),$ $l=1,\ldots,n,$ and $A(x)$ as in the proof of Theorem \ref{gfunction}. Then,
 \begin{align*}
    |R_{1,n+1,*}^{\lambda_1,\ldots,\lambda_n}& (f)(x)- \mathcal{Y}_m^{\lambda_1,\ldots,\lambda_n}(f)(x)|\\
    \leq & \frac{1}{\sqrt{\pi}}\sum_{l=1}^n\left( \prod_{j=1}^n x_j^{-\lambda_j} \int_{B_l(x)} |f(y)| \int_0^\infty \Big|\frac{\partial}{\partial x_1} \mathbb{W}_t(x_1,y_1)\Big|\prod_{j=2}^n \mathbb{W}_t(x_2,y_2)\frac{dt}{\sqrt{t}}\prod_{j=1}^n y_j^{\lambda_j}dy\right.\\
    & \left. + \prod_{j=1}^n x_j^{-\lambda_j} \int_{C_l(x)} |f(y)| \int_0^\infty \Big|\frac{\partial}{\partial x_1} \mathbb{W}_t(x_1,y_1)\Big|\prod_{j=2}^n
    \mathbb{W}_t(x_2,y_2)\frac{dt}{\sqrt{t}}\prod_{j=1}^n y_j^{\lambda_j}dy\right)
\end{align*}
\begin{align*}
    \leq & C \sum_{l=1}^n\left(  \int_{B_l(x)} |f(y)| \int_0^\infty \frac{e^{-|x-y|^2/8t}}{t^{n/2+1}} dt dy + \int_{C_l(x)} |f(y)| \int_0^\infty \frac{e^{-|x-y|^2/8t}}{t^{n/2+1}} dt dy\right)\\
    \leq & C \sum_{l=1}^n\left(  \int_{B_l(x)} \frac{|f(y)|}{|x-y|^n}dy +  \int_{C_l(x)} \frac{|f(y)|}{|x-y|^n}dy \right), \quad x\in \prod_{j=1}^n [2^{m_j}, 2^{m_j+1}).
 \end{align*}
 By denoting $\bar{y}=(y_2,\ldots,y_n) \in \mathbb{R}^{n-1},$ and $\displaystyle \tilde{f}_m(\bar{y}) = \chi_{\overset{n}{\underset{j=2}{\prod}} [2^{m_j-1}, 2^{m_j+2})} (\bar{y})\int_{2^{m_1-1}}^{2^{m_1+2}} |f(y_1,\bar{y})|dy_1,$ we obtain
 $$ \int_{B_1(x)}\frac{|f(y)|}{|x-y|^n}dy \leq C\frac{1}{2^{m_1}} M_{n-1}(\tilde{f}_m)(\bar{x}),$$
 where $\bar{x}=(x_2, \ldots,x_n)$ when $x=(x_1,x_2,\ldots,x_n),$ and $M_{n-1}(\tilde{f}_m)(\bar{x})$ denotes the Hardy Littlewood maximal function in $\mathbb{R}^{n-1}.$\\

 Then, for every $\beta>0,$
 \begin{align}\label{C18}
    m_{\lambda_1,\ldots,\lambda_n}& \Big(\Big\{x\in\prod_{j=1}^n[2^{m_j},2^{m_j+1}) :\int_{B_1(x)}\frac{f(y)}{|x-y|^n}dy>\beta\Big\}\Big)\nonumber\\
    \leq & m_{\lambda_1,\ldots,\lambda_n} \Big(\Big\{x\in\prod_{j=1}^n[2^{m_j},2^{m_j+1}) :\frac{C}{2^{m_1}} M_{n-1}(\tilde{f}_m)(\bar{x})>\beta\Big\}\Big)\nonumber\\
    \leq & M 2^{m_1}2^{2\overset{n}{\underset{j=1}{\sum}} \lambda_jm_j} m_0^{(n-1)} \Big(\Big\{x\in\mathbb{R}^{n-1} : M_{n-1}(\tilde{f}_m)(\bar{x})>\frac{\beta2^{m_1}}{C}\Big\}\Big)\nonumber\\
    \leq & \frac{M}{\beta} \int_{\overset{n}{\underset{j=1}{\prod}}[2^{m_j-1},2^{m_j+2})} |f(y)|\prod_{j=1}^ny_j^{2\lambda_j}dy,
 \end{align}
 where $M>0$ does not depend on $m\in \mathbb{Z}^n.$
 Also, if $1<p<\infty,$ by using Jensen's inequality and maximal Theorem we have
 \begin{align}\label{C19}
    \int_{\overset{n}{\underset{j=1}{\prod}}[2^{m_j},2^{m_j+1})} \left| \int_{B_1(x)} \frac{|f(y)|}{|x-y|^n}\right|^p \prod_{j=1}^n x_j^{2\lambda_j}dx \leq &C2^{2\overset{n}{\underset{j=1}{\sum}} \lambda_jm_j -m_1(p-1)}\int_{\mathbb{R}^{n-1}} |M_{n-1}(\tilde{f}_m) (\bar{x})|^p dx \nonumber \\
   \leq &  C \int_{\overset{n}{\underset{j=1}{\prod}}[2^{m_j-1},2^{m_j+2})}  |f(y)|^p \prod_{j=1}^n y_j^{2\lambda_j}dy,
 \end{align}
 where $C>0$ does not depend on $m\in \mathbb{Z}^n.$\\

 Estimates similar than \eqref{C18} and \eqref{C19} can be obtain for $B_l(x),$ $l=2,\ldots, n,$ and $C_l(x),$ $l=1,\ldots, n.$ Putting together all the estimates we get
 \begin{align*}
    m_{\lambda_1,\ldots,\lambda_n} (\{x\in\prod_{j=1}^n[2^{m_j},2^{m_j+1}) &: | R_{1,n+1,*}^{\lambda_1,\ldots,\lambda_n} (f)(x)- \mathcal{Y}_m^{\lambda_1,\ldots,\lambda_n}(f)(x)| >\gamma/2 \})\\
    \leq & \frac{C}{\gamma} \int_{\overset{n}{\underset{j=1}{\prod}}[2^{m_j-1},2^{m_j+2})} |f(y)|\prod_{j=1}^n y_j^{2\lambda_j}dy
 \end{align*}
 and, for every $1<p<\infty,$
 \begin{align*}
    \int_{\overset{n}{\underset{j=1}{\prod}}[2^{m_j},2^{m_j+1})} |R_{1,n+1,*}&^{\lambda_1,\ldots,\lambda_n} (f)(x)- \mathcal{Y}_m^{\lambda_1,\ldots,\lambda_n}(f)(x)|^p \prod_{j=1}^n x_j^{2\lambda_j}dx \\
    & \leq  C \int_{\overset{n}{\underset{j=1}{\prod}}[2^{m_j-1},2^{m_j+1})} |f(y)|^p \prod_{j=1}^n y_j^{2\lambda_j}dy.
 \end{align*}
 Hence, by summing in $m\in \mathbb{Z}^n,$ it follows
 \begin{align}\label{C20}
    m_{\lambda_1,\ldots,\lambda_n} (\{x\in (0,\infty)^n &: | R_{1,n+1,*}^{\lambda_1,\ldots,\lambda_n} (f)(x)-\sum_{m\in \mathbb{Z}^n} \mathcal{Y}_m^{\lambda_1,\ldots,\lambda_n}(f)(x)| >\gamma/2 \})\nonumber\\
    \leq & \frac{C}{\gamma}\|f\|_{L^1((0,\infty)^n, \overset{n}{\underset{j=1}{\prod}} x_j^{2\lambda_j})},
 \end{align}
 and, for every $1<p<\infty,$
 \begin{equation}\label{C21}
    \| R_{1,n+1,*}^{\lambda_1,\ldots,\lambda_n} (f)(x)-\sum_{m\in \mathbb{Z}^n} \mathcal{Y}_m^{\lambda_1,\ldots,\lambda_n}(f)(x)\|_p \leq C\|f\|_p.
 \end{equation}
 From \eqref{C16} and \eqref{C20} we deduce that $R_{1,n+1,*}^{\lambda_1, \ldots,\lambda_n}$ is bounded from
 $L^1((0,\infty)^n, \overset{n}{\underset{j=1}{\prod}}x_j^{2\lambda_j}dx)$ into $L^{1,\infty}((0,\infty)^n, \overset{n}{\underset{j=1}{\prod}}x_j^{2\lambda_j}dx),$
 and from \eqref{C17} and \eqref{C21} it infers that $R_{1,n+1,*}^{\lambda_1, \ldots,\lambda_n}$ is bounded from
 $L^p((0,\infty)^n, \overset{n}{\underset{j=1}{\prod}}x_j^{2\lambda_j}dx)$ into itself, for every $1<p<\infty.$\\

 Thus the proof of Proposition \ref{Proposition 4.3} is finished.
 \end{proof}

 According to Proposition \ref{Proposition 4.2} and \ref{Proposition 4.3} standard arguments allow us to conclude that, for every $f\in L^p((0,\infty)^n, \overset{n}{\underset{j=1}{\prod}} x_j^{2\lambda_j}dx),$ $1\leq p<\infty,$ and $i=1,\ldots,n,$ there exists the limit
 $$ \lim_{\varepsilon\rightarrow 0^+} \int_{|x-y|>\varepsilon} R_i^{\lambda_1,\ldots,\lambda_n}(x,y) f(y) dy, \quad \text{ a. e. } x\in (0,\infty)^n.$$

 We define, for every $f\in L^p((0,\infty)^n, \overset{n}{\underset{j=1}{\prod}} x_j^{2\lambda_j}dx),$ $1\leq p<\infty,$ and $i=1,\ldots,n,$ the Riesz transform $R_i^{\lambda_1,\ldots,\lambda_n}(f) $ of $f$ by
 $$R_i^{\lambda_1,\ldots,\lambda_n}(f)(x)= \lim_{\varepsilon\rightarrow 0^+} \int_{|x-y|>\varepsilon} R_i^{\lambda_1,\ldots,\lambda_n}(x,y) f(y) dy, \quad \text{ a.e. } x\in (0,\infty)^n.$$
 Note that, by Proposition \ref{Proposition 4.2}, for every $i=1,\ldots,n$ this definition extends the initial definition of Riesz transform
 $ R_i^{\lambda_1,\ldots,\lambda_n}$ from $C_c^\infty((0,\infty)^n)$ to $L^p((0,\infty)^n, \overset{n}{\underset{j=1}{\prod}} x_j^{2\lambda_j}dx),$ $1\leq p<\infty.$\\

 Finally, from Proposition \ref{Proposition 4.3} we infer the desired $L^p$-boundedness properties for the Riesz transform
 $R_i^{\lambda_1,\ldots,\lambda_n},$ $i=1,\ldots,n,$ and the proof of Theorem \ref{Riesz} is complete.

\end{document}